\documentclass[a4paper,11pt]{amsart}

\usepackage{amsmath,amsthm,amscd,amssymb}
\usepackage{stmaryrd}

\usepackage{tikz}
\usepackage[all]{xy}


\setlength{\marginparwidth}{1in}
\setlength{\marginparsep}{0in}
\setlength{\marginparpush}{0.1in}
\setlength{\topmargin}{0in}
\setlength{\headheight}{0pt}
\setlength{\headsep}{0pt}
\setlength{\footskip}{.3in}
\setlength{\textheight}{9.0in}
\setlength{\textwidth}{6in}
\setlength{\parskip}{4pt}

\newcommand{\abs}[1]{\left\lvert #1 \right\rvert} 

\usepackage[backref]{hyperref}
\hypersetup{
  colorlinks   = true,          
  urlcolor     = blue,          
  linkcolor    = purple,          
  citecolor   = blue             
}

\newenvironment{customthm}[1]
  {\innercustomthm}
  {\endinnercustomthm}

\usepackage{fullpage,enumerate}
\usepackage[euler-digits]{eulervm}


\theoremstyle{definition}
\newtheorem{defn}{Definition}[section]
\newtheorem{ex}[defn]{Example}
\newtheorem{notation}[defn]{Notation}
\newtheorem{remark}[defn]{Remark}

\theoremstyle{plain}
\newtheorem{theorem}[defn]{Theorem}
\newtheorem{corollary}[defn]{Corollary}
\newtheorem{lemma}[defn]{Lemma}
\newtheorem{prop}[defn]{Proposition}


\newcommand{\Z}{\mathbb{Z}}
\newcommand{\Q}{\mathbb{Q}}

\newcommand{\R}{\mathbb{R}}

\renewcommand{\P}{\mathbb{P}}

\newcommand{\relint}{\textnormal{relint}\,}

\newcommand{\Star}{\textnormal{Star}}
\newcommand{\pr}{\textnormal{pr}}

\newcommand{\dist}{\textnormal{dist}}

\newcommand{\mk}[1]{M_{0,#1}^\textnormal{trop}}

\newcommand{\curly}[1]{\mathcal{#1}}

\DeclareMathOperator{\trop}{trop}

\newcommand{\wo}{\setminus} 

\newcommand{\nocontentsline}[3]{}
\newcommand{\tocless}[2]{\bgroup\let\addcontentsline=\nocontentsline#1{#2}\egroup}


\begin{document}
\title[Tropical moduli spaces of rational weighted stable curves]{Moduli spaces of rational weighted stable curves and tropical geometry}
\author {Renzo Cavalieri, Simon Hampe, Hannah Markwig, Dhruv Ranganathan}
\address {Renzo Cavalieri, Department of Mathematics, Colorado State University}
\email{renzo@math.colostate.edu}
\address{Simon Hampe, Institut f\"ur Mathematik, Technische Universit\"at Berlin}
\email{hampe@math.tu-berlin.de}
\address {Hannah Markwig, Fachbereich Mathematik, Eberhard Karls Universit\"at T\"ubingen}
\email {hannah@mathematik.uni-tuebingen.de}
\address {Dhruv Ranganathan, Department of Mathematics, Yale University}
\email{dhruv.ranganathan@yale.edu}

\begin{abstract}
We study moduli spaces of rational weighted stable tropical curves, and their connections with Hassett spaces. Given a vector $w$ of weights, the moduli space of tropical $w$-stable curves can be given the structure of a balanced fan if and only if $w$ has only heavy and light entries. In this case, the tropical moduli space can be expressed as the Bergman fan of an explicit graphic matroid. The tropical moduli space can be realized as a geometric tropicalization, and as a Berkovich skeleton, its algebraic counterpart. This builds on previous work of Tevelev, Gibney and Maclagan, and Abramovich, Caporaso, and Payne. Finally, we construct the moduli spaces of heavy/light weighted tropical curves as fibre products of unweighted spaces, and explore parallels with the algebraic world. 
\end{abstract}
\maketitle

\setcounter{tocdepth}{2}
\tableofcontents
\pagebreak
\setcounter{section}{0}
\numberwithin{equation}{section}

\section{Introduction}

\subsection{Main results}\label{subsec-wsc}
Let $w=(w_1,\ldots,w_n)\in\Q^n$ be a vector of weights satisfying $0< w_i\leq 1$ for all $i$ and $\sum w_i>2$. Let $C$ be a tree of $\mathbb{P}^1$'s, with $n$ smooth marked points $p_1,\ldots,p_n$. Marked points are not necessarily distinct: a subset of marks is allowed to coincide precisely when the sum of the corresponding weights is no larger than $1$. The curve $C$ is \textit{$w$-stable}, if for each component $T$ of $C$, the sum 
\[
\sum_{i\; ; \;p_i\in T}w_i+\#\textnormal{nodes}>2.
\]
These spaces were introduced by Hassett~\cite{Has03} in the context of the log minimal model program. In particular, he proves that there exists a smooth projective scheme representing the moduli problem of $w$-weighted stable curves. This scheme is denoted $\overline{M}_{0,w}$. 

If $w=(1^n)$, then $\overline{M}_{0,w}=\overline{M}_{0,n}$ is the well-known moduli space of stable curves. In this case, there is an elegant connection to the space of leaf-labelled metric trees, obtained by \textit{geometric tropicalization}. 

In this paper, we study tropical analogues of moduli spaces of rational weighted stable curves and their relation to the algebraic moduli spaces.
We introduce {\it tropical rational weighted stable curves} in the natural way, by defining the combinatorial type of a $w$-stable tropical curve to be the dual graph of a $w$-stable curve, keeping track of the weights on the marked ends (Definition~\ref{tws}). Parameter spaces for tropical rational weighted stable curves carry the structure of abstract cone complexes, with graph contractions giving rise to natural gluing between cones. 

We address the following questions:
\begin{enumerate}[(A)]
 \item For which values of $w$ can the cone complex $M^{\trop}_{0,w}$ be given the structure of a balanced fan embedded into a vector space?
\item When $M^{\trop}_{0,w}$ is a balanced fan, can it be realized as a tropicalization of the classical moduli spaces of $w$-weighted stable curves?
\item In the above cases, can the toric variety associated to the fan $M^{\trop}_{0,w}$ be used to define the $w$-stable compactification of the locus of nonsingular marked curves $M_{0,w}$?
\end{enumerate}
\noindent
These questions are addressed completely. We begin by observing that, for any fixed $n$,  the notion of stability is governed by a finite set of first degree inequalities in the weights. The parameter space for the weights is subdivided into polyhedral chambers. For any $w,w'$ in the same chamber, $M^{\trop}_{0,w}$ is canonically isomorphic to $M^{\trop}_{0,w'}$.

A weight vector $w$ has only \textit{heavy and light} weights if it is in the same chamber as $(1^f, \epsilon^t)$, for $\epsilon$ such that $t\cdot \epsilon<1$ (see Definition \ref{lhw}). The following answers Question (A). 

\begin{customthm}{I}
 The cone complex $M^{\trop}_{0,w}$ can be given the structure of a balanced fan in a vector space if and only if $w$ has only heavy and light entries.
\end{customthm}

\noindent
Our next result generalizes work of Ardila and Klivans~\cite{AK06} to the weighted case.

\begin{customthm}{II}
The moduli space $M^{\trop}_{0,w}$ with heavy/light weights is the Bergman fan of a graphic matroid.
\end{customthm}
\noindent
The above two results are combined and stated more precisely in the main body of the paper as Theorem~\ref{thm-heavyandlight}.

Finally, we answer Questions (B) and (C). See Theorem~\ref{thm-tropicalizing} in the text.
 
\begin{customthm}{III}
Let $w$ be a heavy/light weight vector. There exists a toric variety $X(\Delta)$ with torus $T_w$, and an embedding $M_{0,w}\hookrightarrow T_w$, such that
\begin{enumerate}[(T1)]
\item The tropicalization of $M_{0,w}$ with respect to the given embedding is $M_{0,w}^{\trop}$.
\item The fan $\Delta$ is naturally identified with $M_{0,w}^{\trop}$.
\item The closure of $M_{0,w}$ in $X(\Delta)$ is Hassett's compactification $\overline M_{0,w}$.
\end{enumerate}
\end{customthm}

In Section~\ref{sec: skeleta} we explore the connection with Berkovich skeletons, in the spirit of the results obtained in~\cite{ACP12,U}. This establishes a compatibility between the geometric tropicalization approach employed in this text, and the perspective of skeletons of analytic spaces.

A natural variation of weighted stable curves, also considered by Hassett, arises by allowing the weight vector $w$ to have zero entries. In this case, the moduli spaces are described by fibre products of the universal curve of $M_{0,w^+}$, where $w^+$ is the subcollection of positive entries of $w$. We discuss a tropical analogue of this situation in Theorem \ref{thm-tropfibre} and Corollary \ref{cor-fibreheavylight}. 

The main results rely on a careful study of the classical and combinatorial \textit{reduction morphisms} from the spaces $\overline M_{0,n}$ and $M_{0,n}^{\trop}$ to the weighted spaces. 
\subsection{Context and motivation}
\label{cnm}
Tropical geometry has become a successful tool in algebraic geometry, with exciting applications in the study of enumerative geometry, moduli spaces, and linear series on algebraic curves. Tropical enumerative geometry began with Mikhalkin's celebrated Correspondence Theorem relating numbers of plane curves satisfying point conditions with their tropical counterparts~\cite{Mi03}. The results sparked substantial interest in tropical moduli spaces.

The space $M^{\trop}_{0,n}$ is a polyhedral complex parametrizing leaf-labelled metric trees. It was first studied in connection with phylogenetics~\cite{BHV01}, and later, in relation with the geometry of the tropical Grassmanian~\cite{SS04a}. The cone complex $M^{\trop}_{0,n}$ can be embedded into a vector space, and given the structure of a balanced fan, by assigning weight $1$ to each top dimensional cone. In \cite{GKM07,Mi07}, $M_{0,n}^{\trop}$ is discussed in analogy with the classical moduli space, with the goal of understanding its tropical intersection theory. However, the connection between the algebraic and tropical moduli spaces runs much deeper. Building on previous work of Tevelev~\cite{Tev07}, Gibney and Maclagan~\cite{GM07} exhibit $M_{0,n}^{\trop}$ as a \textit{tropicalization}. They find an embedding of $M_{0,n}$ into a torus, such that the tropicalization of $ M_{0,n}$ is a balanced fan $\Sigma$, such that $\Sigma\cong M^{\trop}_{0,n}$. The closure of $M_{0,n}$ in the toric variety $X(\Sigma)$ is  $\overline M_{0,n}$.

This phenomenon falls under the theory of tropical compactification, as developed by Hacking, Keel, and Tevelev~\cite{Tev07,HKT09}. The philosophy is that the features of a suitable (e.g. toroidal) compactification are inherent in the tropicalization of a subvariety of a torus, i.e.\ ``the tropicalization knows a good compactification''. The $M_{0,n}$ case is particularly nice as the tropicalization has an intrinsic modular interpretation. With this connection between the spaces $\overline{M}_{0,n}$ and $\mk{n}$ established, the orbit--cone correspondence of the ambient toric variety induces an order reversing bijection between dimension $k$ strata in the classical moduli space, and codimension $k$ cones in the tropical moduli space. Furthermore, using techniques from toric intersection theory, Katz~\cite[Section 7]{Kat09} shows that the intersection theory on $\overline M_{0,n}$ can be related to the toric intersection theory on the ambient toric variety, and hence to the tropical intersection theory on $M^{\trop}_{0,n}$. With the spaces of rational weighted stable curves, we exhibit a new class of moduli spaces, having a natural tropical modular skeleton. It seems reasonable to expect that intersection numbers on the heavy/light spaces $\overline M_{0,w}$ can be computed tropically, following work of Katz.

\begin{remark}
The spaces $\overline{\mathcal M}_{g,w}$ for higher genus are defined analogously. In general however, such spaces do not admit nice embeddings to toric varieties. Nonetheless, the moduli space of tropical $w$-stable curves can be viewed as a skeleton of the analytification of $\overline{\mathcal M}_{g,w}$. Since this paper first appeared, Ulirsch has exhibited this connection~\cite{U}. In genus $0$, we observe that the compactification of $\overline M_{0,w}$ can be obtained combinatorially. It is intriguing to ask whether there exists a similar relationship between ${\mathcal M}_{g,w}$ and $\overline{\mathcal M}_{g,w}$.
\end{remark}

\subsection{Outline}
Section~\ref{sec: tropical-moduli} is devoted to projections of $\mk{n}$ and their relation to Bergman fans with the nested set subdivision of a building set defined in terms of graphs. We start with a subsection reviewing the necessary preliminaries, subdivided in a part about matroids, Bergman fans and nested sets and a part about basics in tropical geometry and the moduli space of abstract $n$-marked tropical curves. Subsection~\ref{section_moduli} contains original work. The main results are Theorem~\ref{moduli_cor_mainresult}, stating that a certain projection of $\mk{n}$ is the Bergman fan of a graphic matroid in a nested set subdivision and Theorem \ref{thm-heavyandlight} that states that this projection is an embedding of $\mk{w}$ as a balanced fan if $w$ contains only heavy and light points. If $w$ does not only contain heavy and light points, then we cannot embed $\mk{w}$ as a balanced fan. This is stated in Theorem~\ref{thm-heavyandlight}.

In Section~\ref{sec: trop-wsc} we explore the tropicalization of the algebraic moduli spaces $\overline{M}_{0,w}$. We start with a subsection reviewing the preliminaries in the  context of the tropicalization of $\overline{M}_{0,n}$. Subsection~\ref{subsec: geom-trop} contains our main result of this section, Theorem~\ref{thm-tropicalizing}, which states that in the case of heavy and light points, we can embed $\overline{M}_{0,w}$ into a toric variety defined by $\mk{w}$. This space exists as a balanced fan as a consequence of the results in Section~\ref{sec: tropical-moduli}. In this embedding, the tropicalization of $\overline M_{0,w}$ intersected with the torus is shown to be canonically isomorphic to the tropical moduli space. We also discuss the situation where we do not only have heavy and light points. In Section~\ref{sec: LosevManin}, we consider a special case: the  Losev--Manin spaces,  with exactly two heavy and only light points otherwise. They are toric varieties, which also follows from Theorem \ref{thm-tropicalizing}, since the corresponding tropical spaces are just subdivisions of $\R^{n-3}$. We then consider the relationship with Berkovich skeletons

In Section~\ref{section_fibre}, we construct the moduli spaces of heavy/light weighted tropical curves as fibre products of spaces of unweighted tropical curves. The motivating idea here is to replace the light points with weight $0$ points. The resulting classical spaces, studied by Hassett, are singular. The heavy/light spaces can be obtained as natural desingularizations of the weight $0$ spaces. The weight $0$ spaces, in turn, may be expressed as fibre products of spaces of unweighted curves. The main results of this section rely on several general structure results for fibre products of Bergman fans associated to graphic matroids.

\noindent
\textbf{Acknowledgements.} The authors are grateful to Martin Ulirsch for discussing his related work. The first author is grateful for the support from  NSF grant DMS-1101549, NSF RTG grant 1159964. The second and third author were partially supported by DFG-grant 4797/1-2, the third author in addition by DFG-grant 4797/6-1. The fourth author acknowledges several helpful conversations with Dustin Cartwright, Dave Jensen, Sam Payne, and Tif Shen. The paper was substantially improved by the comments of two anonymous referees, to whom we are very grateful. 

\section{Tropical moduli spaces of rational weighted curves}\label{sec: tropical-moduli}

Let $C$ be a rational abstract tropical curve. That is, $C$ is a leaf-labelled metric tree. Edges adjacent to leaves are called \textit{ends}, and are metrized as $[0,\infty]$. The other edges are called \textit{bounded edges} and have finite length. Let $w=(w_1,\ldots,w_n)\in\Q^n$ be a vector of weights satisfying $0< w_i\leq 1$ for all $i$.  

\begin{defn}\label{tws}
Let $V$ be a vertex of $C$ and assume that there are $l$ bounded edges adjacent to $V$ as well as the ends with markings $i\in I$ for a subset $I\subset [n]$. We say that $C$ is \textit{$w$-stable} at $V$ if $\sum_{i\in I}w_i+l>2$. We say that $C$ is \textit{$w$-stable} if it is $w$-stable at every vertex. We define $M^{\trop}_{0,w}$ to be the set of all tropical rational $w$-stable curves.
\end{defn}

The set $M^{\trop}_{0,w}$ can be given the structure of an abstract cone complex by gluing the cones corresponding to a combinatorial type (i.e.\ a tree without metrization data) in the way dictated by the underlying tropical curves.
As such, for any $w$, $M^{\trop}_{0,w}$ is a sub-cone complex of $M^{\trop}_{0,n}$. This definition generalizes the definition of abstract $n$-marked rational tropical curve: if we set $w=(1^n)$, then the stability condition is that every vertex is at least $3$-valent. 

\begin{defn}\label{lhw}
 Let $w\in \Q^n$.   Let $i \in [n]$. 
 \begin{itemize}
  \item[(H)] We call $i$ \textit{heavy in $w$}, if for all $j \neq i$ we have $w_i + w_j > 1$.
  \item[(S)] We call $i$ \textit{small in $w$}, if $w_i + w_j >1$ implies that $j$ is heavy in $w$.
 \end{itemize}
If, in addition, the total weight of the small points is less than $1$, we say that they are \textit{light}.
\end{defn}

We will often consider weight vectors that are \textit{heavy/light} (resp. \textit{heavy/small}), meaning that  each entry of $w$ is either heavy or light (resp. heavy or small). 


\subsection{Tropical moduli spaces of rational curves as Bergman fans}

\subsubsection{Matroids, Bergman fans, and nested sets}
Matroids abstract the concept of linear independence of subsets of a set of vectors. Important examples are matroids of point configurations defined by the usual linear independence and matroids of graphs, where dependence is defined in terms of cycles. For a detailed introduction to matroids, see for instance~\cite{Kat14}.

To any matroid $M$ on  a ground set $E(M)$ we associate a polyhedral fan, the \textit{Bergman fan} $B(M) \subseteq \R^{|E(M)|}$ in the following manner:
$$B(M) := \{w \in \R^{\abs{E(M)}}; M_w \textnormal{ is loop-free}\},$$
where $M_w$ is the matroid on $E(M)$ whose bases are all bases $B$ of $M$ of minimal $w$-weight $\sum_{i \in B} w_i$. Ardila and Klivans~\cite{AK06} showed that $B(M)$ is a polyhedral cone complex that coincides with the order complex of the lattice of flats of $M$. More precisely, for each chain of flats in $M$
$$\curly{F} = \emptyset \subsetneq F_1 \subsetneq \dots \subsetneq F_r = E,$$
we let $C_{\curly{F}}$ be the cone in $\R^{\abs{E}}$ spanned by rays $v_{F_1},\dots,v_{F_{r-1}}$, with lineality space spanned by $v_{F_r}$. Here $v_F = - \sum_{i \in F} e_i$, where $e_i$ is the $i$-th standard basis vector. The collection of these cones forms a fan whose support is $B(M)$. We call this particular polyhedral structure the \textit{chains-of-flats} subdivision of $B(M$).

A useful equivalent definition is the following~\cite[Proposition~2.5]{FS05}.
\[
B(M) := \{w;\; \max\{w_i; i \in C\} \textnormal{ is attained at least twice for all circuits }C\}.
\]
Feichtner and Sturmfels demonstrate multiple polyhedral structures that can be placed on this fan using the theory of~\textit{building sets}.

\begin{defn}
 Let $\curly{F}$ be the lattice of flats of a matroid $M$. For two flats $F,F' \in \curly{F}$ we write $[F,F'] := \{G \in \curly{F}: F \subseteq G \subseteq F'\}$.
 A \textit{building set} for $\curly{F}$ is a subset $\curly{G}$ of $\curly{F} \wo \{\emptyset\}$ such that the following holds:
 
 \noindent
 For any $F \in \curly{F} \wo \{\emptyset\}$, let $\{G_1,\dots,G_k\}$ be the maximal elements of $\curly{G}$ contained in $F$. Then there is an isomorphism of partially ordered sets:
 $$\varphi_F: \prod_{j=1}^k [\emptyset,G_j] \to [\emptyset,F],$$
 where the $j$-th component of $\varphi_F$ is the inclusion $[\emptyset,G_j] \subseteq [\emptyset,F]$.
 
 A subset $\curly{S}$ of a building set $\curly{G}$ is called \textit{nested}, if for any set of incomparable elements $F_1,\dots,F_l$ in $\curly{S}$ with $l \geq 2$, the join $F_1 \vee \dots \vee F_l$ is not an element of $\curly{G}$.
\end{defn}

\begin{remark}
 The nested sets of a building set $\curly{G}$ form an abstract simplicial complex (a subset of a nested set is a nested set). We can assign to each flat $F$ the vector $v_F \in \R^{|E|}$ defined above, and accordingly a cone for each nested set of $\curly{G}$. It has been shown in \cite[Theorem 4.1]{FS05} that this produces a polyhedral fan whose support is $B(M)$.
\end{remark}

\begin{notation}
 Note that each Bergman fan contains the linear space $L$, spanned by the vector $(1,\dots,1)$. It is standard to quotient by this lineality space, and study the resulting space
 $$B'(M) := B(M) / L.$$
 \end{notation}
 
 \begin{remark}
  It is a well-known fact that the set of Bergman fans is closed under cartesian products. In fact, if $M,M'$ are matroids on ground sets $E,E'$, then
$$B(M) \times B(M') = B(M \oplus M'),$$
where $M \oplus M'$ is the matroid on $E \amalg E'$, whose bases are disjoint unions of bases of $M$ and $M'$.
 \end{remark}

\subsubsection{Facts about tropical geometry and $M^{\trop}_{0,n}$}\label{prelim-trop}

\begin{defn}
 A \textit{tropical fan} $(X,\omega)$ is a rational pure $d$-dimensional polyhedral fan in $\R^n$, with a \textit{weight function} $\omega: X^{(d)} := \{\sigma \in X; \dim \sigma = d\} \to \Z_{>0}$, fulfilling the balancing condition in the sense of~\cite[Section~3.4]{MS09}.
 
 We consider two balanced fans $(X,\omega),(X',\omega')$ to be \textit{equivalent}, if they have a common refinement: if there exists a balanced fan $(X'',\omega'')$, whose cones are contained in cones of $X$ and $X'$ respectively; and $\omega''$ is compatible with $\omega$ and $\omega'$ in the natural way.
\end{defn}

\begin{remark}
 In this paper, we consider balanced fans that arise as Bergman fans of matroids. Given a matroid $M$, $B(M)$, equipped with the chain-of-flats subdivision is a fan. It is balanced with weight function identically $1$.
 
It is also known that $(B(M), \omega \equiv 1)$ is \textit{irreducible}, i.e.\ any balanced fan of the same dimension, which is contained in $B(M)$, must be equal to $B(M)$ as a set and its weight function must be an integer multiple of $w$. See, for instance, \cite[Lemma~2.4]{FR10}.
\end{remark}

\begin{defn}
 A \textit{tropical morphism} between balanced fans $X \subset \R^n,Y \subset \R^m$ is a map of fans, respecting the weight function, and the integral structure on $X$ and $Y$. More explicitly, it is a map $f: X \to Y$ mapping cones to cones, induced by a linear map $\Z^n \to \Z^m$. We say that $f$ is an \textit{isomorphism}, if it is bijective and respects the weights of $X$ and $Y$.
\end{defn}

It is often necessary to understand the local structure of a fan, near a given cone.

\begin{defn}
 Let $(X,\omega)$ be a tropical fan and $\tau$ a cone of $X$. We define the local picture of $X$ around $\tau$ to be the weighted fan 
 $$\Star_X(\tau) := (\{\Pi(\sigma); \tau \leq \sigma; \sigma \textnormal{ a cone of } X\},\omega_\Star),$$where $\Pi: \R^n \to \R^n / V_\tau$ is the residue map and the weight function is defined by $\omega_\Star: \Pi(\sigma) \mapsto \omega(\sigma)$. It is easy to see that $\Star_X(\tau)$ is a tropical fan (with respect to the lattice $\Z^n/ \Lambda_\tau$).
 
 If $p$ is a point in the support of $X$, we  also consider
 $$\Star_X(p) := (\{\sigma - p; p \in \sigma; \sigma \textnormal{ a cone of } X\},\omega_\Star),$$
 where $\omega_\Star$ is defined in the same way and $\sigma - p = \{ a - p; a \in \sigma\}$.
\end{defn}

We now briefly discuss some properties of the tropical moduli space $\mk{n}$ and its embedding as a balanced fan. For a more detailed account, see~\  \cite{GKM07, Mi07, SS04a}.
\begin{remark}
Let $C$ be an abstract $n$-marked tropical curve and let $v_i$ and $v_j$ be vertices incident to the leaves labeled $i$ and $j$. Let $\dist(i,j)$ denote the distance between these vertices. It has been shown that the vector
  $$d(C) = (\dist(i,j))_{i <j} \in \R^{\binom{n}{2}} / \Phi(\R^n)$$
  identifies $C$ uniquely, where $\Phi: \R^n \to \R^{\binom{n}{2}}, x \mapsto (x_i + x_j)_{i < j}$.

 Let $C$ be an abstract $n$-marked tropical curve with a single edge, inducing a partition or \textit{split} on the leaves $[n] = I \amalg I^c$. We denote the corresponding ray spanned by $d(C)$ by $v_I = v_{I^c}$. A $d$-dimensional cone of $\mk{n}$ corresponds to a combinatorial type of curve with $d$ bounded edges. If these edges introduce splits $I_1,\dots,I_d$, the cone is spanned by rays $v_{I_1},\dots,v_{I_d}$. The fan $\mk{n}$ can be balanced with all weights equal to $1$.

The fan structure described above, with one cone for each combinatorial type is the coarsest fan structure that can be defined on $\mk{n}$. We call it the \textit{combinatorial subdivision}.
 
It has been shown in~\cite{AK06,FR10} that 
 $$\mk{n} \cong B'(K_{n-1}),$$
where $K_{n-1}$ is the complete graph on $n-1$ vertices. For later use, we now describe the explicit isomorphism between $\mk{n}$ and $B'(K_{n-1})$. That is, we  describe the image of a vector $v_F$, when $F$ is any flat. See Figure \ref{fig_flat_to_curve} for an example.
 
For convenience, throughout the rest of this paper, we label the vertices of $K_{n-1}$ by $2,\ldots, n$. 

A flat $F$ of $K_{n-1}$ is a union of complete subgraphs on disjoint vertex sets $V_1,\dots,V_t$. Denote by $C_F$ the tropical $n$-marked curve constructed in the following manner: Attach $t$ bounded edges $e_1,\dots,e_t$ of length 1 to a common vertex $v$. To the end of $e_i$, attach leaves $\{j; j \in V_i\}$. Then attach all leaves $\{j; j \in [n] \wo \bigcup_{i=1}^t V_i\}$ to the vertex $v$. Then
 $$v_F \mapsto C_F.$$
 
Conversely, if we pick a ray $v_I$ of $\mk{n}$ with $1 \notin I$, it corresponds to the flat $F_I$, which is the complete graph on vertices in $I$. We will see in Example \ref{ex_mn_nested_subdiv} how the combinatorial subdivision of $\mk{n}$ can be expressed as a nested set subdivision of $B'(K_{n-1})$.

 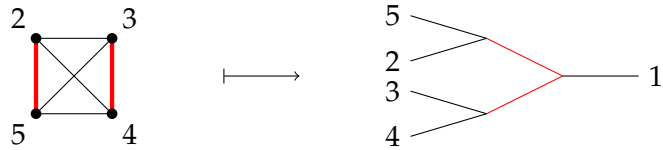
\begin{figure}[ht]
 \centering
 \begin{tikzpicture}
  \matrix[column sep = 10mm]{
    \draw (0,0) -- (1,0) -- (1,-1) -- (0,-1) -- (0,0);
    \draw (0,0) -- (1,-1);
    \draw (1,0) -- (0,-1);
    \draw[ultra thick,red] (0,0) -- (0,-1);
    \draw[ultra thick,red] (1,0) -- (1,-1);
    \fill[black] (0,0) circle (2pt) node[above left] {2};
    \fill[black] (1,0) circle (2pt) node[above right] {3};
    \fill[black] (1,-1) circle (2pt) node[below right] {4};
    \fill[black] (0,-1) circle (2pt) node[below left] {5};&
    \draw[|->] (0,-0.5) -- (1,-0.5); &
    \draw[red] (0,-0.5) -- (-1,0);
    \draw[red] (0,-0.5) -- (-1,-1);
    \draw (0,-0.5) -- (1,-0.5) node[right]{1};
    \draw (-1,0) -- (-2,-0.3) node[left]{2};
    \draw (-1,0) -- (-2,0.3) node[left]{5};
    \draw (-1,-1) -- (-2,-1.3) node[left]{4};
    \draw (-1,-1) -- (-2,-0.7) node[left]{3};

    \\
  };
 \end{tikzpicture}  
 \caption{Assigning a curve to a flat of $K_4$: The thick red lines indicate the flat, which consists of two complete subgraphs. We get one bounded edge for each subgraph. They are connected at a common vertex to which we attach all remaining leaves.}\label{fig_flat_to_curve}
 \end{figure}

\end{remark}

\subsection{Tropical moduli spaces of weighted curves as Bergman fans}\label{section_moduli}

Let $w$ be a weight vector. To obtain the cone complex $\mk{w}$, we wish to contract unstable rays and their adjacent cones. It is a natural thought to use a projection of $\mk{n}$ embedded as a fan contracting these rays; we denote this projection by $\pr_w$. It turns out however that $\pr_w$ may contract too many cones: this is the case if and only if $w$ does not only have heavy and light entries.
The projection $\pr_w(\mk{n})$ is still an interesting balanced fan which we can understand in terms of the Bergman fan of a graphic matroid. Moreover, as we  see in  Section \ref{sec: skeleta}, the image of this projection can be realized as the Berkovich skeleton of the classical moduli space $\overline M_{0,w}$ with respect to an appropriate toroidal structure. 

Our first goal is to gain a clearer understanding of the role of the complete graph $K_{n-1}$ in the $M_{0,n}^{\trop}$ case, and how it generalizes to the weighted case. 

An $n$-marked trivalent rational tropical curve is $w$-stable if all vertices with exactly two leaves $i$ and $j$ attached fulfill the condition $w_i + w_j > 1$. Consequently, two weight vectors $w,w'$ produce the same set of stable combinatorial types of trivalent curves, if $w_i + w_j > 1 \iff w_i' + w_j' > 1$ for all pairs $i,j$. It is therefore reasonable to expect that a graph encoding these conditions will be meaningful towards understanding spaces of weighted stable tropical curves.

\begin{defn}
 Let $w $ be a weight vector. We define the \textit{total weight graph} $G_t(w)$ to be the graph on vertices $\{1,\dots,n\}$ where two vertices $i,j$ are connected by an edge, if and only if $w_i + w_j > 1$. 
 \end{defn}

The notions of heavy and small can be efficiently expressed in terms of the total weight graph. Let $i \in [n]$. 
 \begin{itemize}
  \item  $i$ is heavy if $i$ is connected to all other vertices in $G_t(w)$. 
  \item $i$ is small if $i$ is only connected to heavy vertices.
 \end{itemize}
 
In parallel with the $M_{0,n}^{\trop}$ case, we seek to have a graph on $(n-1)$ vertices. 

\begin{defn} 
The \textit{reduced weight graph}, denoted $G(w)$, is the graph obtained from $G_t(w)$ by deleting any single heavy vertex.
\end{defn}

\begin{remark}
Dropping different heavy vertices we obtain isomorphic graphs, since all heavy vertices are incident to all other vertices. We will see in Corollary \ref{moduli_cor_needstable} that if there is no heavy weight, $\pr_w(\mk{n})$ does not have the ``expected dimension'', which is $n-3$. We will henceforth assume that $w_1 = 1$, and the reduced weight graph is constructed by deleting the vertex $1$ from $G_t(w)$.
\end{remark}

\begin{remark}\label{rem-prw}
There is a natural projection morphism induced by the fact that $G(w)$ is a subgraph of the complete graph $K_{n-1}$:
 \[
\widetilde \pr_w: \mk{n}\cong B'(K_{n-1}) \to B'(G(w)),
 \]
which forgets the coordinates corresponding to edges not lying in $G(w)$. We can see that $\widetilde \pr_w$ is precisely the projection morphism $\pr_w$ discussed above.

A ray $v_I$ with $1 \notin I$ which is contracted because it corresponds to an unstable curve comes from a flat $F_I$ given by the induced subgraph on the vertices in $I$. The fact that $v_I$ is unstable means that $\sum_{i \in I}w_i\leq 1$ which implies $w_i+w_j\leq 1$ for all $i,j\in I$, so no edge connecting two vertices in $I$ belongs to $G(w)$.
Vice versa, if no edge connecting two vertices in $I$ belongs to $G(w)$, then $w_i+w_j\leq 1$ for all $i,j\in I$ and hence $v_{\{i,j\}}$ corresponds to an unstable curve and is contracted by $\pr_w$ for all $i,j\in I$. But then $v_I=\sum_{\{i,j\}\subset I} v_{\{i,j\}}$ is also contracted, see~\cite[Lemma~2.6]{KM07}.
\end{remark}

We first relate the projection $\pr_w(\mk{w})$ to the nested set subdivision induced by the \textit{building set of 1-connected flats} on the graph $G(w)$.

\begin{defn}
 Let $G$ be a (connected) graph and $M_G$ the corresponding matroid. We define a \textit{building set of 1-connected flats}:
 $$\curly{G}_G := \{F \in \curly{F}(M_G); G_{\mid F} \textnormal{ is a connected graph}\},$$
 where $G_{\mid F}$ is the restriction of $G$ to the edges contained in $F$.
 \end{defn}

The above definition depends not only on the matroid, but on the presentation of this matroid as a graphic matroid. Recall that two non-isomorphic graphs $G$, $G'$ may yield the same matroid. In this case $\curly{G}_G$, $\curly{G}_{G'}$ might be different building sets. We also warn the reader that the matroidal notion for \textit{connected sets} differs from the above definition. A connnected set of a graphic matroid $M(G)$ is a set whose underlying graph is 2-connected.
 
 To see that $\curly{G}_G$ is a building set, let $F$ be a flat of $M_G$. The maximal elements of $\curly{G}_G$ contained in $F$ are exactly the connected components $G_1,\dots,G_k$ of the subgraph $G_{\mid F}$. Any flat $F' \subseteq F$ can also be partitioned into its connected components, which in turn must be subsets of the connected components of $F$. We see that there is an isomorphism
 $$\prod_{j=1}^k [\emptyset,G_j] \cong [\emptyset,F].$$

We are now ready to state the first main result of this section. 

\begin{theorem}\label{moduli_cor_mainresult}
Let $w $ be a weight vector and assume $w$ has at least two heavy entries. Then $\pr_w(\mk{n})=B'(G(w))$.
 Furthermore, the combinatorial types of curves in $\pr_w(\mk{n})$ correspond to the cones of $B'(G(w))$ in the nested set subdivision with respect to $\curly{G}_{G(w)}$, the building set of 1-connected flats.
\end{theorem}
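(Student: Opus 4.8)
The plan is to establish the set equality $\pr_w(\mk{n})=B'(G(w))$ and the combinatorial correspondence simultaneously, by matching the images under $\pr_w$ of the cones of $\mk{n}$ with the cones of the nested set subdivision of $B'(G(w))$ determined by $\curly{G}_{G(w)}$. The basic dictionary I will use is the one from Section~\ref{prelim-trop}: a ray $v_I$ of $\mk{n}$ with $1\notin I$ corresponds to the flat $F_I$ of $K_{n-1}$, and $\pr_w$ sends it to $v_{E(G(w)[I])}$, the generator associated to the flat $\mathrm{cl}(E(G(w)[I]))$ of $M_{G(w)}$. Here $G(w)[I]$ denotes the subgraph of $G(w)$ induced on $I$, and the elements of $\curly{G}_{G(w)}$ are precisely the flats $E(G(w)[I])$ for which $G(w)[I]$ is connected.

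First I would pin down where the hypothesis is used. Since $w$ has at least two heavy entries and $G(w)$ is obtained from $G_t(w)$ by deleting the single heavy vertex $1$, at least one heavy vertex survives in $G(w)$; being adjacent to all others it forces $G(w)$ to be connected, whence $M_{G(w)}$ has rank $n-2$, the same as $M_{K_{n-1}}$. This is exactly what makes $B'(G(w))$ have the expected dimension $n-3$ and lets $\pr_w$ be dimension preserving on a suitable top cone; with only one heavy vertex this fails (cf.\ Corollary~\ref{moduli_cor_needstable}). The inclusion $\pr_w(\mk{n})\subseteq B'(G(w))$ is then immediate from the circuit description of Bergman fans: every circuit of $M_{G(w)}$ is a cycle of $G(w)$, hence a circuit of $K_{n-1}$, so the ``maximum attained twice'' conditions cutting out $B(G(w))$ are among those cutting out $B(K_{n-1})$ and are preserved by the projection.

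The reverse inclusion and the cone statement I would prove through a combinatorial translation. Given $1$-connected flats $F_a=E(G(w)[I_a])$, connectivity of the induced subgraphs shows that $F_a,F_b$ are incomparable exactly when $I_a,I_b$ are incomparable as sets, and that the join $F_a\vee F_b$ is again $1$-connected precisely when $I_a\cap I_b\neq\emptyset$. Consequently a collection of $1$-connected flats is nested in $\curly{G}_{G(w)}$ if and only if the sets $\{I_a\}$ form a laminar family, i.e.\ the splits $I_a$ are pairwise compatible and hence realize a combinatorial type of tree. Such a laminar family lifts to the collection $\{F_{I_a}\}$ of flats of $K_{n-1}$, which is itself nested for $\curly{G}_{K_{n-1}}$ (incomparable $I_a,I_b$ are disjoint, so $F_{I_a}\vee F_{I_b}$ is not $1$-connected) and therefore spans a cone $\sigma$ of $\mk{n}$ in the combinatorial subdivision, by Example~\ref{ex_mn_nested_subdiv}. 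Since $\pr_w(\sigma)$ is spanned by the $v_{F_a}$, it is exactly the nested set cone indexed by $\{F_a\}$. As the maximal nested sets exhaust $B'(G(w))$ by \cite{FS05}*{Theorem~4.1}, this shows $B'(G(w))\subseteq\pr_w(\mk{n})$, giving the set equality, and identifies the combinatorial types occurring in $\pr_w(\mk{n})$ with the cones of the nested set subdivision.

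I expect the main obstacle to be the bookkeeping for splits $I$ whose induced subgraph $G(w)[I]$ is disconnected. Such a ray $v_I$ need not be contracted, yet $\pr_w(v_I)=v_{E(G(w)[I])}$ is not a ray of the nested set subdivision; instead it lands in the relative interior of the cone spanned by the $1$-connected flats of the components of $E(G(w)[I])$. Verifying that this is coherent --- that the image of every combinatorial type of $\mk{n}$ is a $w$-stable type whose splits are exactly the resulting $1$-connected flats, that distinct nested sets yield distinct cones, and that the nested condition matches tree compatibility for families of size $\geq 3$ and not merely for pairs --- is the delicate part of the argument. An alternative, perhaps cleaner, route for the set equality alone would be to note that $\pr_w(\mk{n})$ is a balanced fan (a push-forward of one) contained in the irreducible fan $B'(G(w))$ of the same dimension $n-3$, and invoke irreducibility to force equality; the combinatorial refinement above is still needed for the second assertion.
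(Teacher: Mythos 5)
Your proposal is correct, but it reaches the conclusion by a genuinely different route than the paper. The paper's proof is a short intersection-theoretic argument: $\dim B'(G(w))=n-3$ since $G(w)$ is connected; Corollary~\ref{moduli_cor_needstable} (the only place the two-heavy hypothesis enters) guarantees that some top-dimensional cone is not contracted, so the image of $\pr_w$ --- a balanced fan, being a push-forward --- has dimension $n-3$; irreducibility of Bergman fans then forces $\pr_w(\mk{n})=B'(G(w))$, and the cone statement is dispatched by Lemma~\ref{moduli_lemma_nested} together with the injectivity analysis. This is precisely the ``alternative, cleaner route'' you mention in your last sentence. Your primary argument instead proves both inclusions directly: $\subseteq$ via the circuit description of Bergman fans (circuits of $M_{G(w)}$ are circuits of $M_{K_{n-1}}$ supported on edges of $G(w)$), and $\supseteq$ by translating nested sets of $1$-connected flats into laminar families of splits, hence into cones of $\mk{n}$ that map \emph{onto} the corresponding nested cones, which cover $B'(G(w))$ by \cite{FS05}*{Theorem 4.1}. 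What your route buys: it avoids the balancing/irreducibility machinery, it establishes the set equality and the combinatorial correspondence simultaneously, and it is constructive, exhibiting a preimage cone for every nested cone --- strictly more than the paper's treatment of the second assertion, which only checks that injective top cones map to distinct nested cones. What the paper's route buys is brevity and no laminar bookkeeping. Finally, the ``delicate part'' you flag --- rays $v_I$ with $G(w)[I]$ disconnected --- is less delicate than you fear: $G(w)$ is a split graph (this is Lemma~\ref{geom_lemma_split}, proved later in the paper), so every edge of $G(w)$ meets the clique part, and consequently any induced subgraph $G(w)[I]$ has at most one component containing an edge. Hence $\pr_w(v_I)=v_{E(G(w)[I])}$ is always either $0$ or a single ray of the nested set subdivision, never a point in the relative interior of a higher-dimensional nested cone, and images of cones of $\mk{n}$ are honest nested cones; this closes your argument completely. (Note that the paper's own Lemma~\ref{moduli_lemma_nested} silently glosses over the same point: for the flats $F_{I_j}\cap G(w)$ to form a nested set they must lie in the building set, i.e.\ be $1$-connected, which is again exactly the split-graph observation.)
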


Before proceeding to the proof, we consider some examples and prove certain auxiliary results.

\begin{ex}\label{ex_mn_nested_subdiv}
 Let $w = (1,\dots,1)$. Since no ray of $\mk{n}$ becomes unstable, $\pr_w$ is  the identity map.
In this case, $G_t(w) = K_n$, the complete graph on $n$ vertices, and the reduced graph is $G(w) = K_{n-1}$. We already observed that the Bergman fan corresponding to this graph is $B'(K_{n-1})\cong \mk{n}=\pr_w(\mk{n})$. Theorem \ref{moduli_cor_mainresult} tells us that the combinatorial subdivision of $\mk{n}$ corresponds to the nested set subdivision of $B'(K_{n-1})$ with respect to $\curly{G}_{K_{n-1}}$. This is seen as follows. It is well known that the combinatorial subdivision is the coarsest possible polyhedral structure on $\mk{n}$. Feichtner and Sturmfels~\cite[Theorem 5.3]{FS05} showed that it is obtained as the nested set subdivision with respect to the building set of connected flats. In the case of graphic matroids, this means choosing all flats whose underlying graph is 2-connected. However in this particular case, flats are disjoint unions of complete graphs on at least $3$ vertices, so they are 1-connected if and only if they are 2-connected, excluding the case of the complete graph on two vertices. 
\end{ex}

The following example demonstrates that $\pr_w(\mk{n})=B'(G(w))$ may not be the embedding of the cone complex $\mk{w}$ as a fan.
\begin{ex}\label{moduli_ex_badtypes} Let $w = (1,1,3/4, 3/4,1/4)$. The reduced weight graph is a $K_3$ with an additional edge attached to it (connecting the remaining 1 and the vertex with weight 1/4). The corresponding Bergman fan is $B'(G(w))\cong \mk{4} \times \R$.  

The 1-connected flats of $G(w)$ are depicted in Figure \ref{fig_connected_flats_example}. A nested set is formed either by a chain or two incomparable flats whose join is not connected, i.e. by two vertex-disjoint flats. Only $F_3, F_4$ are vertex-disjoint, so all other nested sets are formed by chains. Hence we obtain the following 8 cones:
\begin{align*}
 \sigma_1 &:= \textnormal{cone}(v_{F_3},v_{F_4}) & \sigma_5 &:= \textnormal{cone}(v_{F_2},v_{F_7}) \\
 \sigma_2 &:= \textnormal{cone}(v_{F_1},v_{F_5}) & \sigma_6 &:= \textnormal{cone}(v_{F_3},v_{F_5}) \\
 \sigma_3 &:= \textnormal{cone}(v_{F_1},v_{F_6}) & \sigma_7 &:= \textnormal{cone}(v_{F_4},v_{F_6}) \\
 \sigma_4 &:= \textnormal{cone}(v_{F_2},v_{F_5}) & \sigma_8 &:= \textnormal{cone}(v_{F_4},v_{F_7})
\end{align*}

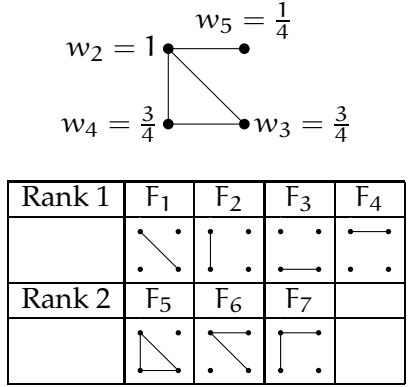
\begin{figure}[ht]
\centering
\begin{tikzpicture}
 \draw (0,0) -- (1,-1) -- (0,-1) -- (0,0);
  \draw (0,0) -- (1,0);
  \fill[black] (0,0) circle (2pt) node[left]{$w_2 = 1$};
  \fill[black] (1,-1) circle (2pt) node[right]{$w_3 = \frac{3}{4}$};
  \fill[black] (0,-1) circle (2pt) node[left]{$w_4 = \frac{3}{4}$};
  \fill[black] (1,0) circle (2pt) node[above]{$w_5 = \frac{1}{4}$};
\end{tikzpicture}\\ \vskip 10pt
\begin{tabular}{|l | c | c | c | c|}
\hline
 Rank 1 & $F_1$ & $F_2$ & $F_3$ & $F_4$ \\ \hline
 & 
 \begin{tikzpicture}[scale = 0.5]    
    \fill[white] (0,0.3) circle (2pt); 
    \fill[black] (0,0) circle (2pt) ;
  \fill[black] (1,-1) circle (2pt) ;
  \fill[black] (0,-1) circle (2pt) ;
  \fill[black] (1,0) circle (2pt) ;
  \draw (0,0) -- (1,-1);
   \end{tikzpicture} &
   \begin{tikzpicture}[scale = 0.5]    
    \fill[white] (0,0.3) circle (2pt); 
    \fill[black] (0,0) circle (2pt) ;
  \fill[black] (1,-1) circle (2pt) ;
  \fill[black] (0,-1) circle (2pt) ;
  \fill[black] (1,0) circle (2pt) ;
  \draw (0,0) -- (0,-1);
   \end{tikzpicture} &
   \begin{tikzpicture}[scale = 0.5]    
    \fill[white] (0,0.3) circle (2pt); 
    \fill[black] (0,0) circle (2pt) ;
  \fill[black] (1,-1) circle (2pt) ;
  \fill[black] (0,-1) circle (2pt) ;
  \fill[black] (1,0) circle (2pt) ;
  \draw (0,-1) -- (1,-1);
   \end{tikzpicture} &
   \begin{tikzpicture}[scale = 0.5]    
    \fill[white] (0,0.3) circle (2pt); 
    \fill[black] (0,0) circle (2pt) ;
  \fill[black] (1,-1) circle (2pt) ;
  \fill[black] (0,-1) circle (2pt) ;
  \fill[black] (1,0) circle (2pt) ;
  \draw (0,0) -- (1,0);
   \end{tikzpicture}
 \\ \hline
 Rank 2 & $F_5$ & $F_6$ & $F_7$ &\\ \hline
 & 
 \begin{tikzpicture}[scale = 0.5]    
    \fill[white] (0,0.3) circle (2pt); 
    \fill[black] (0,0) circle (2pt) ;
  \fill[black] (1,-1) circle (2pt) ;
  \fill[black] (0,-1) circle (2pt) ;
  \fill[black] (1,0) circle (2pt) ;
  \draw (0,0) -- (1,-1) -- (0,-1) -- (0,0);
   \end{tikzpicture} &
   \begin{tikzpicture}[scale = 0.5]    
    \fill[white] (0,0.3) circle (2pt); 
    \fill[black] (0,0) circle (2pt) ;
  \fill[black] (1,-1) circle (2pt) ;
  \fill[black] (0,-1) circle (2pt) ;
  \fill[black] (1,0) circle (2pt) ;
  \draw (1,0) -- (0,0) -- (1,-1);
   \end{tikzpicture} &
   \begin{tikzpicture}[scale = 0.5]    
    \fill[white] (0,0.3) circle (2pt); 
    \fill[black] (0,0) circle (2pt) ;
  \fill[black] (1,-1) circle (2pt) ;
  \fill[black] (0,-1) circle (2pt) ;
  \fill[black] (1,0) circle (2pt) ;
  \draw (1,0) -- (0,0) -- (0,-1);
   \end{tikzpicture} &  
\\\hline 
\end{tabular}

\caption{The reduced weight graph $G( (1,1,3/4,3/4,1/4))$ and its 1-connected flats.} \label{fig_connected_flats_example}
\end{figure}

Figure \ref{fig_nested_set_combinatorial} shows how this can be interpreted as the projection of $\mk{5}$. The latter is the fan over the Petersen graph with 10 rays $v_{ij}$, $i\neq j\subset [5]$ and 15 cones spanned by $v_{\{i,j\}}$ and $v_{\{k,l\}}$ if $\{i,j\}\cap \{k,l\}=\emptyset$.
The projection $\pr_w$ contracts the two rays $v_{\{3,5\}}$ and $v_{\{4,5\}}$, since the corresponding tropical curves are not $w$-stable. The balancing condition around the ray $v_{\{1,2\}}$ is given by the relation $v_{\{1,2\}}=v_{\{3,4\}}+v_{\{3,5\}}+v_{\{4,5\}}$, so we have $\pr_w(v_{\{1,2\}})=\pr_w(v_{\{3,4\}})$, and the 2-dimensional cone spanned by $v_{\{1,2\}}$ and $v_{\{3,4\}}$, even though it corresponds to curves which are $w$-stable, is mapped to a ray. Thus, the fan $\pr_w(\mk{n})$ is not an embedding of the abstract cone complex $\mk{w}$.
We can read off the projection from the graph $G(t)$ interpreted as a subgraph of the complete graph on 4 vertices $K_4$: $\pr_w(v_{\{1,2\}})=\pr_w(v_{\{3,4\}})=v_{F_3}$, $\pr_w(v_{\{1,3\}})=v_{F_7}$, $\pr_w(v_{\{1,4\}})=v_{F_6}$, $\pr_w(v_{\{1,5\}})=v_{F_5}$, $\pr_w(v_{\{2,3\}})=v_{F_1}$, $\pr_w(v_{\{2,4\}})=v_{F_2}$, $\pr_w(v_{\{2,5\}})=v_{F_4}$.

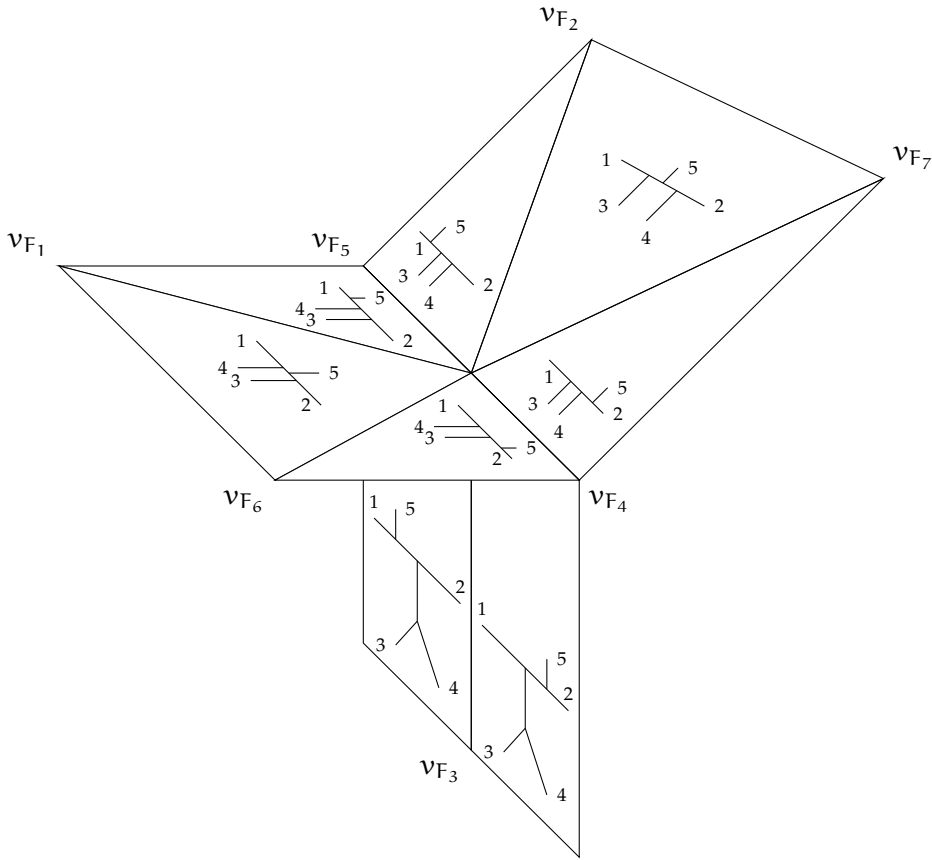
\begin{figure}[ht]
\centering
\begin{tikzpicture}[z = {(-0.71,0.71)},scale = 2]
 \draw (0,0,0) -- (0,0,1) node[above left] {$v_{F_5}$} -- (0,-2.5,1) -- (0,-2.5,0) node[below left]{$v_{F_3}$} -- (0,0,0);
 \draw (0,0,0) -- (0,0,-1) -- (0,-2.5,-1) -- (0,-2.5,0) -- (0,0,0);
 
 \filldraw[fill=white] (0,0,0) -- (0,0,1) -- (-2,0,1) node[above left]{$v_{F_1}$}-- (0,0,0);
 \filldraw[fill=white] (0,0,0) -- (-2,0,1) -- (-2,0,-1) node[below left]{$v_{F_6}$} -- (0,0,0);
 \filldraw[fill=white] (0,0,0) -- (-2,0,-1) -- (0,0,-1)  node[below right]{$v_{F_4}$}-- (0,0,0);
 
 \draw (0,0,0) -- (1.5,1.5,1) node[above left]{$v_{F_2}$} -- (0,0,1) -- (0,0,0);
 \draw (0,0,0) -- (1.5,1.5,1) -- (2,2,-1) node[above right]{$v_{F_7}$}-- (0,0,0);
 \draw (0,0,0) -- (2,2,-1) -- (0,0,-1) -- (0,0,0);
 
 
 \draw (0,-1.6,0.9) node[above]{ \tiny 1}-- (0,-1.6,0.1) node[above]{\tiny 2};
 \draw (0,-1.6,0.7) -- (0,-1.4,0.7) node[right]{\tiny 5};
 \draw (0,-1.6,0.5) -- (0,-2,0.5);
 \draw (0,-2,0.5) -- (0,-2.3,0.7) node[left]{\tiny 3};
 \draw (0,-2,0.5) -- (0,-2.3,0.3) node[right]{\tiny 4};
 \draw (0,-1.6,-0.9) node[above]{ \tiny 2}-- (0,-1.6,-0.1) node[above]{\tiny 1};
 \draw (0,-1.6,-0.7) -- (0,-1.4,-0.7) node[right]{\tiny 5};
 \draw (0,-1.6,-0.5) -- (0,-2,-0.5);
 \draw (0,-2,-0.5) -- (0,-2.3,-0.7) node[right]{\tiny 4};
 \draw (0,-2,-0.5) -- (0,-2.3,-0.3) node[left]{\tiny 3};
 
  \draw (-0.3,0,0.3) node[right]{\tiny 2}-- (-0.3,0,0.8)node[left]{\tiny 1};
 \draw (-0.3,0,0.7) -- (-0.2,0,0.7) node[right]{\tiny 5};
 \draw (-0.3,0,0.5) -- (-0.6,0,0.5) node[left]{\tiny 3};
 \draw (-0.3,0,0.6) -- (-0.6,0,0.6) node[left]{\tiny 4};
 
 \draw (-1.2,0,0.3) node[left]{\tiny 1}-- (-1.2,0,-0.3) node[left]{\tiny 2};
 \draw (-1.2,0,0) -- (-1,0,0) node[right]{\tiny 5};
 \draw (-1.2,0,0.05) -- (-1.5,0,0.05) node[left]{\tiny 4};
 \draw (-1.2,0,-0.07) -- (-1.5,0,-0.07) node[left]{\tiny 3};
 
 \draw (-0.3,0,-0.3) node[left]{\tiny 1}-- (-0.3,0,-0.8)node[left]{\tiny 2};
 \draw (-0.3,0,-0.7) -- (-0.2,0,-0.7) node[right]{\tiny 5};
 \draw (-0.3,0,-0.5) -- (-0.6,0,-0.5) node[left]{\tiny 4};
 \draw (-0.3,0,-0.6) -- (-0.6,0,-0.6) node[left]{\tiny 3};
 
 \draw (0.3,0.3,0.9) node[below]{\tiny 1}-- (0.3,0.3,0.4) node[right]{\tiny 2};
 \draw (0.3,0.3,0.7) -- (0.15,0.15,0.7) node[left]{\tiny 3};
 \draw (0.3,0.3,0.6) -- (0.15,0.15,0.6) node[below]{\tiny 4};
 \draw (0.3,0.3,0.8) --(0.4,0.4,0.8) node[right]{\tiny 5};
 
 \draw (1.2,1.2,0.3) node[left]{\tiny 1}-- (1.32,1.32,-0.3) node[right]{\tiny 2};
 \draw (1.24,1.24,0.1) -- (1.04,1.04,0.1) node[left]{\tiny 3};
 \draw (1.28,1.28,-0.1) -- (1.08,1.08,-0.1) node[below]{\tiny 4};
 \draw (1.26,1.26,0) -- (1.36,1.36,0) node[right]{\tiny 5};
 
  \draw (0.3,0.3,-0.8) node[right]{\tiny 2}-- (0.3,0.3,-0.3) node[below]{\tiny 1};
 \draw (0.3,0.3,-0.6) -- (0.15,0.15,-0.6) node[below]{\tiny 4};
 \draw (0.3,0.3,-0.5) -- (0.15,0.15,-0.5) node[left]{\tiny 3};
 \draw (0.3,0.3,-0.7) --(0.4,0.4,-0.7) node[right]{\tiny 5};

\end{tikzpicture}
 \caption{The nested set subdivision of $B'(G(1,1,3/4,3/4,1/4))$ and its combinatorial interpretation.}\label{fig_nested_set_combinatorial}
\end{figure}
This suggests an interpretation of the combinatorics of $B'(G(w))$ as follows (see~Figure \ref{fig_nested_set_combinatorial}). Let $C$ be an element in $\mk{4}$, i.e.\ a four-marked tropical curve with labels $\{1,\dots,4\}$ and weights $(1,1,3/4,3/4)$. We can interpret the additional $\R$-coordinate as placing the leaf $5$ with weight 1/4 somewhere along the subgraph consisting of leaves 1 and 2 and the edge between them, if it exists. The subdivision of the cones of $\mk{4} \times \R$ given by the building set of $1$-connected flats is then obtained by subdividing a cone if the attached leaf $5$ is at a trivalent vertex. We will see in Section \ref{section_fibre} that we can always interpret $B'(G(w))$ in this fashion.

We have seen that $\pr_w(\mk{n})$ is not an embedding of $\mk{w}$ as a balanced fan, since it is ``missing'' the cone $\sigma$ spanned by $v_{\{1,2\}}$ and $v_{\{3,4\}}$. The codimension $1$ type $v_{\{3,4,5\}}$ is $w$-stable, but is adjacent to only $1$ $w$-stable maximal cell, namely $\sigma$. We obtain a ``univalent'' codimension one face. There is no way to embed this codimension one face and its adjacent cones into a vector space as a balanced fan (\textit{cf.} Figure \ref{moduli_fig_onlyonestable}).

 \begin{figure}[ht]
  \centering
  \begin{tikzpicture}
   \matrix[column sep = 30pt,row sep = 10pt, ampersand replacement=\&]{
    \& \& 
    \draw (0,0) -- (2,0); 
    \draw (0,0) -- (-0.5,0.3) node[left]{1};
    \draw (0,0) -- (-0.5,-0.3) node[left]{2};
    \draw (1,0) -- (1,0.5) node[above]{5};
    \draw (2,0) -- (2.5,0.3) node[right]{3};
    \draw (2,0) -- (2.5,-0.3) node[right]{4};
    \draw (2,0) node[right = 30pt] {stable};
    \\
    \draw (0,0) -- (1,0); 
    \draw (0,0) -- (-0.5,0.3) node[left]{1};
    \draw (0,0) -- (-0.5,-0.3) node[left]{2};
    \draw (1,0) -- (1.5,0.3) node[right]{3};
    \draw (1,0) -- (1.5,0) node[right]{4};
    \draw (1,0) -- (1.5,-0.3) node[right]{5};
    \& \draw[->] (0,0) -- (1,0); \& 
    \draw (0,0) -- (2,0); 
    \draw (0,0) -- (-0.5,0.3) node[left]{1};
    \draw (0,0) -- (-0.5,-0.3) node[left]{2};
    \draw (1,0) -- (1,0.5) node[above]{3};
    \draw (2,0) -- (2.5,0.3) node[right]{4};
    \draw (2,0) -- (2.5,-0.3) node[right]{5};
    \draw (2,0) node[right = 30pt] {not stable};
    \\
    \& \& 
    \draw (0,0) -- (2,0); 
    \draw (0,0) -- (-0.5,0.3) node[left]{1};
    \draw (0,0) -- (-0.5,-0.3) node[left]{2};
    \draw (1,0) -- (1,0.5) node[above]{4};
    \draw (2,0) -- (2.5,0.3) node[right]{3};
    \draw (2,0) -- (2.5,-0.3) node[right]{5};
    \draw (2,0) node[right = 30pt] {not stable};
    \\
   };
  \end{tikzpicture}
  \caption{For $w = (1,1,3/4,3/4,1/4)$, only one of the maximal cones adjacent to the codimension one type $v_{\{3,4,5\}}$ corresponds to a stable combinatorial type. We can therefore not embed $\mk{w}$ as a balanced fan.}\label{moduli_fig_onlyonestable}
 \end{figure}
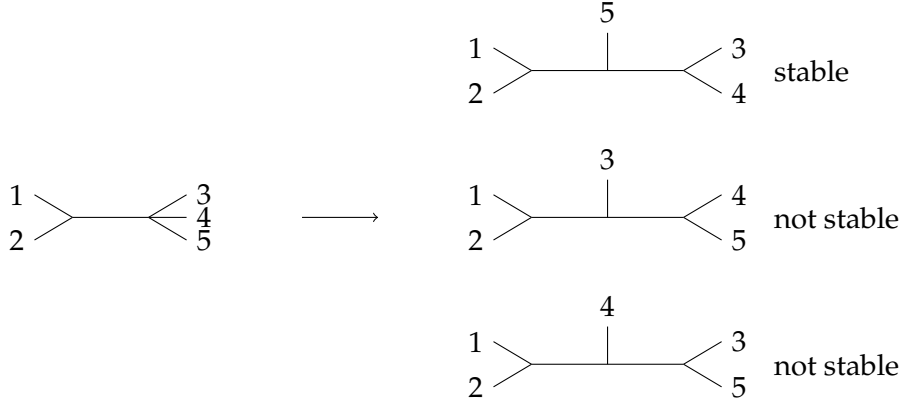
 \end{ex}
 
The following definition characterizes those cones that obstruct a balanced embedding of $\mk{w}$ in a vector space. Such cones will correspond precisely to the top-dimensional cones of $\mk{n}$ on which $\pr_w$ is not injective, see Lemma \ref{moduli_thm_noninjective}.

\begin{defn}
 Let $n \geq 4$ and $w$ be a weight vector. We consider $\mk{n}$ in its combinatorial subdivision.
 \begin{itemize}
  \item We denote by $\curly{U}_w^0$ the collection of all top-dimensional cones $\sigma$ of $\mk{n}$ such that the corresponding combinatorial type is not $w$-stable.
  \item We recursively define $\curly{U}_w^{k+1}$: A top-dimensional cone $\sigma \notin \bigcup_{j=0}^k \curly{U}_w^j$ is in $\curly{U}_w^{k+1}$ if and only if it has a codimension one face $\tau$ such that all other top-dimensional cones $\sigma'$ neighboring $\tau$ lie in $\bigcup_{j=0}^k \curly{U}_w^j$.
 \end{itemize}
Finally we set $\curly{U}_w := \bigcup_{k \geq 0} \curly{U}_w^k$. We call curves lying in the support of $\curly{U}_w$ \textit{inherited $w$-unstable}.

\end{defn}

\begin{lemma}\label{moduli_lemma_nested}
 Let $\sigma$ be a top-dimensional cone of $\mk{n}$ with rays $v_{I_1},\dots,v_{I_{n-3}}$ and assume $1 \notin I_j$ for all $j$. Let $F_{I_k}$ be the flat of $K_{n-1}$ corresponding to the complete graph on vertices in $I_k$. Then 
 $$\{F_{I_1} \cap G(w),\dots,F_{I_{n-3}} \cap G(w)\}$$
 is a nested set with respect to the building set $\curly{G}_{G(w)}$ of 1-connected flats in $G(w)$.
 \begin{proof}
  Let $C_\sigma$ be the combinatorial type associated to $\sigma$. Then $I_1,\dots,I_k$ are the leaf splits induced by the bounded edges of $C_\sigma$. Since we assumed $1 \notin I_j$ for all $j$, we have that any two incomparable $I_i,I_j$ are already disjoint. In particular, any two incomparable flats $F_{I_i} \cap G(w), F_{I_j} \cap G(w)$ must be vertex-disjoint. Now notice that in any graph, the join of two vertex-disjoint flats is just the union. As this is not a connected graph, the claim follows. 
 \end{proof}
\end{lemma}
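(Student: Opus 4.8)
The plan is to reduce the statement to a purely graph-theoretic claim about the threshold graph $G(w)$ and its lattice of flats, and then to verify the defining property of a nested set by hand. First I would pin down what $F_{I_k}\cap G(w)$ is: since $F_{I_k}$ is the edge set of the complete graph on the vertices of $I_k$, the intersection $F_{I_k}\cap G(w)$ is precisely the edge set of the induced subgraph $G(w)[I_k]$, consisting of all edges of $G(w)$ with both endpoints in $I_k$. I would check this is a flat of $M(G(w))$: the closure of $G(w)[I_k]$ can only add an edge whose two endpoints lie in a common connected component of $G(w)[I_k]$, and such a component is contained in $I_k$, so no new edge appears and $G(w)[I_k]$ is already closed.

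The geometric content enters through the fact that the splits $I_1,\dots,I_{n-3}$ of a tree form a laminar family. Since $\sigma$ is top-dimensional its underlying combinatorial type is trivalent, and the splits induced by distinct bounded edges are pairwise compatible; with the normalization $1\notin I_j$ the only surviving compatibility cases are $I_i\subseteq I_j$, $I_j\subseteq I_i$, and $I_i\cap I_j=\emptyset$, the case $I_i\cup I_j=[n]$ being impossible because $1$ lies outside both $I_i$ and $I_j$. Because a containment of index sets induces a containment of induced subgraphs, $I_i\subseteq I_j$ forces $F_{I_i}\cap G(w)\subseteq F_{I_j}\cap G(w)$; contrapositively, whenever $F_{I_i}\cap G(w)$ and $F_{I_j}\cap G(w)$ are incomparable, neither index set contains the other, and laminarity then yields $I_i\cap I_j=\emptyset$. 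Hence any two incomparable flats in our collection are vertex-disjoint.

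With this the nested-set condition is nearly immediate. Let $F_{I_{k_1}}\cap G(w),\dots,F_{I_{k_l}}\cap G(w)$ be pairwise incomparable with $l\geq 2$; by the previous paragraph the index sets are pairwise disjoint, so the subgraphs are pairwise vertex-disjoint. The join of vertex-disjoint flats in a graphic matroid is just their union: no edge can be forced into the closure, since that would require both of its endpoints in one connected component, and every component lives inside a single $I_{k_m}$ where the corresponding flat is already closed. A disjoint union of two or more nonempty graphs is disconnected, hence is not a $1$-connected flat and does not belong to $\curly{G}_{G(w)}$ --- exactly what the definition of a nested set requires.

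The step I expect to demand the most care is checking that the elements $F_{I_k}\cap G(w)$ that actually appear are legitimate members of the building set, since a nested set is by definition a subset of $\curly{G}_{G(w)}$. Index sets $I_k$ with edgeless induced subgraph contribute the empty flat, i.e.\ the collapsed ray $v_\emptyset=0$, and must simply be discarded from the collection. The genuine worry is a nonempty but disconnected $G(w)[I_k]$, which would fail to be $1$-connected; here I would invoke the structure of $G(w)$. By construction $i,j$ are adjacent in $G(w)$ exactly when $w_i+w_j>1$, so $G(w)$ is a threshold graph, and threshold graphs --- hence all their induced subgraphs --- contain no induced $2K_2$. A disconnected graph carrying an edge in each of two components would exhibit an induced $2K_2$, so every nonempty $G(w)[I_k]$ is connected and each surviving $F_{I_k}\cap G(w)$ indeed lies in $\curly{G}_{G(w)}$. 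Once this structural point is in place, the disjointness argument above completes the proof.
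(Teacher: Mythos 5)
Your proof is correct and follows essentially the same route as the paper's: with $1\notin I_j$ the tree splits form a laminar family, so incomparable index sets are disjoint, the corresponding flats are vertex-disjoint, and the join of vertex-disjoint flats is their disconnected union, hence not a 1-connected flat and not in the building set. The extra care you take --- checking that each nonempty $F_{I_k}\cap G(w)$ is genuinely a flat and is 1-connected (via the observation that $G(w)$, being a threshold graph, has no induced $2K_2$), so that the collection really is a subset of $\curly{G}_{G(w)}$ as the definition of a nested set requires --- addresses a point the paper's proof leaves implicit, and is a worthwhile completion rather than a deviation.
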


\begin{lemma}\label{moduli_lemma_wheninjective}
 Let $\sigma$ be a top-dimensional cone of $\mk{n}$ such that $\pr_w$ is not injective on $\sigma$. Then one of the following holds:
 \begin{itemize}
  \item $\sigma$ has a ray $r$ such that $\pr_w(r) = 0$.
  \item $\sigma$ has rays $r,s$ such that $\pr_w(r) = \pr_w(s)$.
 \end{itemize}
 \begin{proof}
  Assume $\sigma$ has rays $v_{I_1},\dots,v_{I_{n-3}}$, which are mapped to non-zero, distinct elements $v_{F_{I_j} \cap G(w)}$. By Lemma \ref{moduli_lemma_nested}, the flats $F_{I_j} \cap G(w)$ form a nested set. It is well-known that nested set subdivisions are simplicial, so the corresponding vectors must be linearly independent. In particular $\pr_w$ must be injective on $\sigma$.  
 \end{proof}
\end{lemma}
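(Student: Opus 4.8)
The plan is to argue by contraposition: I assume that \emph{neither} alternative holds, that is, no ray of $\sigma$ is sent to $0$ and no two distinct rays of $\sigma$ have the same image, and from this I deduce that $\pr_w$ is injective on $\sigma$. Since $\sigma$ is top-dimensional in $\mk{n}$, it has exactly $n-3$ rays $v_{I_1},\dots,v_{I_{n-3}}$, and because $v_I = v_{I^c}$ I may choose each representing split so that $1 \notin I_j$ for every $j$ (using the standing convention $w_1 = 1$). This places me exactly in the hypotheses of Lemma \ref{moduli_lemma_nested}. Under the projection $\pr_w = \widetilde\pr_w$ each ray goes to $\pr_w(v_{I_j}) = v_{F_{I_j} \cap G(w)}$, where $F_{I_j}$ is the flat of $K_{n-1}$ given by the complete graph on the vertices in $I_j$.

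\textbf{Translating the assumptions.} Next I would read off what the two negated conditions say about the flats. Since $v_F = -\sum_{i \in F} e_i$ vanishes precisely when $F = \emptyset$, the assumption that no ray maps to $0$ means that each $F_{I_j} \cap G(w)$ is a \emph{nonempty} flat; and the assumption that no two rays coincide means that the flats $F_{I_1}\cap G(w),\dots,F_{I_{n-3}}\cap G(w)$ are \emph{pairwise distinct}. Thus I have $n-3$ distinct nonempty flats, and by Lemma \ref{moduli_lemma_nested} they form a nested set with respect to the building set $\curly{G}_{G(w)}$ of $1$-connected flats of $G(w)$.

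\textbf{Conclusion.} Finally I would invoke the structural result of Feichtner and Sturmfels (\cite{FS05}*{Theorem 4.1}) that the nested set subdivision is a \emph{simplicial} fan supported on $B(G(w))$: the vectors $v_G$ attached to the elements $G$ of any nested set are linearly independent. Applied to our nested set this makes $v_{F_{I_1}\cap G(w)},\dots,v_{F_{I_{n-3}}\cap G(w)}$ linearly independent. As $\sigma$ is simplicial (its rays $v_{I_j}$ are independent in $\mk{n}$) and $\pr_w$ is a linear map, a point $\sum_j a_j v_{I_j}$ of $\sigma$ is determined by its coefficients $a_j$, which in turn are recovered from the linearly independent images $\sum_j a_j \pr_w(v_{I_j})$. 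Hence $\pr_w$ is injective on $\sigma$, contradicting the hypothesis and proving the lemma.

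\textbf{Main obstacle.} I expect the only delicate point to be the faithful matching of hypotheses across the two descriptions of $\sigma$: one must check that ``no ray maps to $0$'' and ``no two rays have equal image'' correspond \emph{exactly} to ``the flats $F_{I_j}\cap G(w)$ are nonempty and pairwise distinct,'' so that the nested set genuinely consists of $n-3$ distinct elements and the simplicial-cone statement delivers $n-3$ independent vectors rather than fewer. Once this bookkeeping is in place, the two external inputs—Lemma \ref{moduli_lemma_nested} and the simpliciality of nested set subdivisions—close the argument with no further computation.
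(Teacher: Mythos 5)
Your proof is correct and follows essentially the same route as the paper's: both argue the contrapositive, apply Lemma \ref{moduli_lemma_nested} to get that the flats $F_{I_j}\cap G(w)$ form a nested set, and invoke simpliciality of nested set subdivisions to conclude linear independence of the image vectors and hence injectivity of $\pr_w$ on $\sigma$. Your additional bookkeeping (choosing representatives with $1\notin I_j$, and the exact dictionary between ``nonzero, distinct images'' and ``nonempty, pairwise distinct flats'') is implicit in the paper's proof and correctly carried out.
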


\begin{lemma}\label{moduli_thm_noninjective}
 Let $n \geq 4$ and $w $ a weight vector as before. Let $\sigma$ be a top-dimensional cone of $\mk{n}$ in its combinatorial subdivision. Then $\sigma \in \curly{U}_w$ if and only if $\pr_w$ is not injective on $\sigma$. 
\end{lemma}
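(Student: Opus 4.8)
The plan is to prove both implications by first reducing non-injectivity to a concrete dichotomy via Lemma~\ref{moduli_lemma_wheninjective}: orienting every split away from the leaf $1$ (so $1 \notin I$ and $F_I \cap G(w)$ is the induced subgraph of $G(w)$ on $I$), the map $\pr_w$ fails to be injective on a top-dimensional cone $\sigma$ exactly when some ray $v_I$ of $\sigma$ has $\pr_w(v_I)=0$ (equivalently $F_I \cap G(w)=\emptyset$, i.e.\ $I$ spans no edge of $G(w)$) or two rays $v_I,v_{I'}$ satisfy $\pr_w(v_I)=\pr_w(v_{I'})\neq 0$. The converse of Lemma~\ref{moduli_lemma_wheninjective} is immediate, so this is an equivalence, and I will use it in both directions.

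For the implication $\curly{U}_w \Rightarrow$ non-injective I induct on the least $k$ with $\sigma\in\curly{U}_w^k$. In the base case $\sigma\in\curly{U}_w^0$ is not $w$-stable; as $\sigma$ is trivalent, the only unstable vertices are cherries $\{i,j\}$ with $w_i+w_j\leq 1$, and since $w_1=1$ such a cherry avoids $1$, so $F_{\{i,j\}}\cap G(w)=\emptyset$ and $\pr_w(v_{\{i,j\}})=0$. For the inductive step let $\tau$ be the codimension-one face witnessing $\sigma\in\curly{U}_w^{k+1}$, and let $\sigma',\sigma''$ be the two other maximal cones of $\mk{n}$ meeting $\tau$ (there are exactly three). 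By the inductive hypothesis $\pr_w$ is non-injective on $\sigma'$ and $\sigma''$. If the images of the rays of $\tau$ are already dependent, then so are those of $\sigma$ and we are done; otherwise these $n-4$ images are independent, and by the dichotomy the failures on $\sigma',\sigma''$ must involve the extra rays $r',r''$, forcing $\pr_w(r'),\pr_w(r'')\in\operatorname{span}\{\pr_w(v_J): v_J \text{ a ray of }\tau\}$. I then invoke the balancing relation of $\mk{n}$ along $\tau$, namely $r_\sigma+r'+r''\in V_\tau$ (the generalization of the relation $v_{\{1,2\}}=v_{\{3,4\}}+v_{\{3,5\}}+v_{\{4,5\}}$ from Example~\ref{moduli_ex_badtypes}); applying the linear map $\pr_w$ gives $\pr_w(r_\sigma)\in\operatorname{span}\{\pr_w(v_J)\}$, so the $n-3$ images of the rays of $\sigma$ are dependent and $\pr_w$ is non-injective on $\sigma$.

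For the converse, suppose $\pr_w$ is non-injective on $\sigma$. If $\sigma$ is not $w$-stable then $\sigma\in\curly{U}_w^0$, so assume $\sigma$ stable. I first rule out the ray-to-zero case: if $F_I\cap G(w)=\emptyset$ with $|I|\geq 2$ then every pair in $I$ is light, so the subtree cut off by $I$ contains a cherry, an unstable vertex, contradicting stability. Hence we are in the equal-images case, with nested splits $I\subsetneq I'$ and $F_I\cap G(w)=F_{I'}\cap G(w)\neq\emptyset$. Choosing such a pair with $|I'\setminus I|$ minimal and examining the two branches below the edge of $I'$ (using stability once more to forbid a multi-leaf light subtree) forces $I'=I\cup\{\ell\}$ for a single leaf $\ell$ light against all of $I'$. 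I then contract the edge carrying $I$ to produce a ridge $\tau$ whose four-valent vertex has branches $A,B$ (the two subtrees below that edge, $A\sqcup B=I$), the leaf $\ell$, and the direction toward $1$; the two maximal cones other than $\sigma$ at $\tau$ are the resolutions pairing $\ell$ with $A$ and with $B$, carrying new splits $A\cup\{\ell\}$ and $B\cup\{\ell\}$. Each is either unstable (when the branch is a single leaf, since $\ell$ is light against it), hence in $\curly{U}_w^0$, or stable and carrying the merged nested pair $(A,A\cup\{\ell\})$ (resp.\ $(B,B\cup\{\ell\})$) whose bottom flat is strictly smaller than $I$. Inducting on $\min|I|$ over all such merged pairs of $\sigma$ — the base case $|I|=2$ having both branches single leaves, hence both neighbours unstable — the inductive hypothesis places both neighbours in $\curly{U}_w$, and $\tau$ then witnesses $\sigma\in\curly{U}_w$.

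The main obstacle is this converse, and specifically the descent: one must show that the equal-images phenomenon can always be localized, after a single contraction, to one light leaf at a four-valent vertex, and that the two resolved neighbours are genuinely simpler (manifestly unstable, or carrying a strictly smaller merged pair). The two uses of $w$-stability of $\sigma$ — excluding the ray-to-zero case and forbidding a multi-leaf light subtree between the merged edges — are exactly what make the minimality argument terminate, and one must also check that $A\cup\{\ell\}$ and $B\cup\{\ell\}$ are honest bounded-edge splits, which requires each relevant branch to have at least two leaves, i.e.\ precisely the stable case.
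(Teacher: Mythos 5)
Your proof is correct and follows essentially the same route as the paper's: the same dichotomy from Lemma~\ref{moduli_lemma_wheninjective}, the same induction on $k$ using the balancing relation at a codimension-one face for the direction $\curly{U}_w \Rightarrow$ non-injectivity, and the same contract-and-resolve descent (contracting the lower edge of the nested pair so that the other two resolutions are either unstable or carry a strictly smaller equal-image pair) for the converse. If anything, your write-up tightens the two spots the paper leaves informal: you handle explicitly the case where the images of the rays of $\tau$ are already dependent, and you observe that stability (equivalently, the absence of a ray mapping to zero) makes the case $\abs{I_i}-\abs{I_j}>1$ vacuous, which the paper instead dispatches with ``a similar induction argument works.''
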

 \begin{proof}
  First, let $\sigma \in \curly{U}_w = \bigcup_{k \geq 0} \curly{U}_w^k$. We will prove that $\pr_w$ is not injective on $\sigma$ by induction on $k$.  Assume $\sigma \in \curly{U}_w^0$. Then the combinatorial type of $\sigma$ is not $w$-stable, so $\sigma$ has a ray of the form $v_{\{i,j\}}$ with $w_i + w_j \leq 1$. This is equal to the ray $v_{F_{ij}}$, where $F_{ij}$ is the flat of $K_{n-1}$ consisting only of the edge between nodes $i$ and $j$. This edge does not exist in $G(w)$, so $\pr_w(v_{F_{ij}}) = 0$.
  
Consider a cone $\sigma \in \curly{U}_w^{k+1}$. That is, there is a codimension one cone $\tau$ such that the top-dimensional cones adjacent to $\tau$ are $\sigma, \sigma', \sigma''$ with $\sigma', \sigma'' \in \curly{U}_w^k$. By induction, we may suppose that $\pr_w$ is not injective on $\sigma'$ or $\sigma''$. The projection morphism induces a morphism on the local fan $\Star_{\mk{n}}(\tau)$, which is a tropical line, i.e.\ a one-dimensional balanced fan with three rays. The rays corresponding to $\sigma'$ and $\sigma''$ are mapped to 0, so by linearity of the local morphism, the ray corresponding to $\sigma$ must also be mapped to 0. Hence $\pr_w$ is not injective on $\sigma$.
  
Suppose that $\pr_w$ is not injective on a cone $\sigma$ with rays $v_{I_1},\dots,v_{I_{n-3}}$. By Lemma \ref{moduli_lemma_wheninjective}, we notice that either one ray of $\sigma$ is mapped to 0, or two rays are mapped to the same element. In the former case, if $\pr_w(v_{I_j}) = 0$ for some $j$, then $w_a + w_b \leq 1$ for all $a,b \in I_j$. But this implies that $\sigma \in \curly{U}_w^0$. We now consider the latter possibility, where two rays are mapped to the same ray,  say $\pr_w(v_{I_i}) = \pr_w(v_{I_j})$ for some $i \neq j$. We may assume that $1 \notin I_i,I_j$. Then we can also assume that $I_j \subset I_i$, since otherwise $I_i \cap I_j = \emptyset$ and thus $\pr_w(v_{I_i}) = \pr_w(v_{I_j}) = 0$. Now $I_i$ and $I_j$ correspond to two edges. Assume these edges do not share a vertex. Then there must be a chain of edges connecting them, corresponding to splits $I_j  = J_1 \subset \dots \subset J_t = I_i$. As $\pr_w(I_i) = \pr_w(I_j)$, we must have $w_k + w_l \leq 1$ for all $k \in I_i \wo I_j, l \in I_j$. In particular $\pr_w(J_s) = \pr_w(I_j)$ for all $s = 1,\dots,t$. Hence we can assume that the edges corresponding to $I_i$ and $I_j$ share a common vertex. Denote these edges by $e_i$ and $e_j$ and the vertex by $v_{ij}:= e_i \cap e_j$.

  First let us assume that $|I_i| - |I_j| = 1$, i.e.\ there is an additional leaf $l$ at $v_{ij}$. We prove that this cone is inherited unstable by an induction on $|I_j|$. We start with $|I_j| = 2$, i.e.\ $I_j = \{a,b\}$. This implies $w_l + w_a, w_l + w_b \leq 1$. We obtain a codimension one type $C_\tau$ by contracting the edge $e_j$. The two other adjacent top-dimensional types besides $C_\sigma$ have rays $v_{\{l,i\}}$ and $v_{\{l,j\}}$ respectively, both of which are mapped to 0. In particular, $\sigma$ lies in $\curly{U}_w^1$. Now assume $|I_j| > 2$. The vertex of $e_j$ which is not $v_{ij}$ is also trivalent, i.e. we have a partition of $I_j$ into two edge splits $I_j', I_j''$. Again, we obtain a codimension one type $C_\tau$ by contracting $e_j$. We obtain an adjacent top-dimensional type $C_{\sigma'}$ replacing the rays $v_{I_i},v_{I_j}$ with rays $v_{I_j' \cup \{l\}}, v_{I_i}$. 
We now argue by induction that $\sigma' \in \curly{U}_w$. The exact same argument works for the third maximal cone $\sigma'' > \tau$, which finally implies $\sigma\in \curly{U}_w$ as required.

If $I_j' = \{a\}$, then $w_l + w_a \leq 1$ and $\sigma' \in \curly{U}_w^0$. If $|I_j'
| \geq 2$, then $\pr_w(v_{I_j'}) = \pr_w(v_{I_j' \cup \{l\}})$ (as $l$ is not connected to any element of $I_j$ in $G(w)$). As $|I_j'| < |I_j|$ and $\abs{I_j'\cup \{l\}}- \abs{I_j'}=1$, we can apply induction to see that $\sigma' \in \curly{U}_w$ (see also Figure \ref{moduli_fig_induction}). 

  An analogous induction argument may be used in the case that $|I_i| - |I_j| > 1$.
\end{proof}

  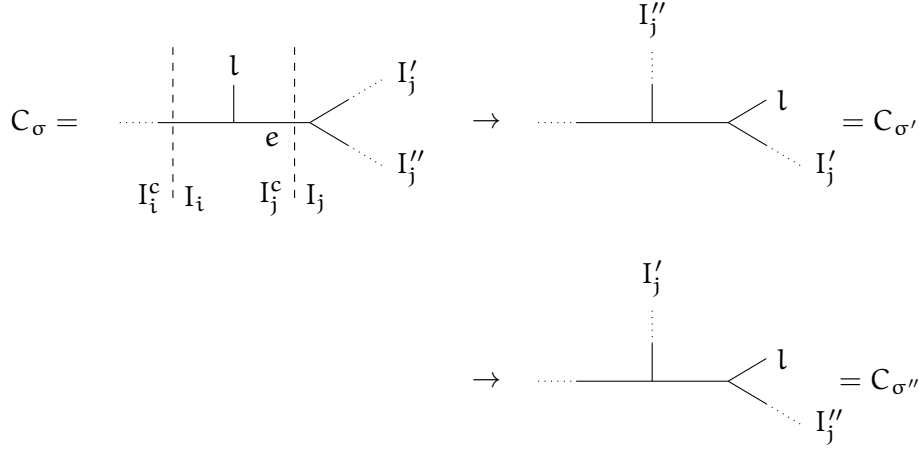
\begin{figure}[ht]
   \centering
    \begin{tikzpicture}
     \matrix[ampersand replacement = \&, column sep = 10pt, row sep = 10pt]{
	\draw[dotted] (-0.5,0) -- (0,0);
	\draw (0,0) -- (2,0);
	\draw (2,0) -- (2.5,0.3);
	\draw[dotted] (2.5,0.3) -- (3,0.6) node[right]{$I_j'$};
	\draw (2,0) -- (2.5,-0.3);
	\draw[dotted] (2.5,-0.3) -- (3,-0.6) node[right]{$I_j''$};
	\draw (1,0) -- (1,0.5) node[above]{$l$};
	\draw (-1.5,0) node{$C_\sigma = $};
	\draw (1.5,0) node[below]{$e$};
	\draw[dashed] (1.8,-1)node[right]{$I_j$} node[left]{$I_j^c$} -- (1.8,1) ;
	\draw[dashed] (0.2,-1) node[right]{$I_i$} node[left]{$I_i^c$}-- (0.2,1) ;

	    \& \draw (0,0) node{$\rightarrow$}; \&
	\draw[dotted] (-0.5,0) -- (0,0);
	\draw (0,0) -- (2,0);
	\draw (2,0) -- (2.5,0.3)node[right]{$l$};
	\draw (2,0) -- (2.5,-0.3);
	\draw[dotted] (2.5,-0.3) -- (3,-0.6) node[right]{$I_j'$};
	\draw (1,0) -- (1,0.5);
	\draw[dotted] (1,0.5) -- (1,1) node[above]{$I_j''$};
	\draw (4,0) node{$= C_{\sigma'}$};
	 \\
	\& 
	\draw (0,0) node{$\rightarrow$};\&
	\draw[dotted] (-0.5,0) -- (0,0);
	\draw (0,0) -- (2,0);
	\draw (2,0) -- (2.5,0.3)node[right]{$l$} ;
	\draw (2,0) -- (2.5,-0.3);
	\draw[dotted] (2.5,-0.3) -- (3,-0.6) node[right]{$I_j''$};
	\draw (1,0) -- (1,0.5);
	\draw[dotted] (1,0.5) -- (1,1) node[above]{$I_j'$};
	\draw (4,0) node{$= C_{\sigma''}$};
	\\
      };
    \end{tikzpicture}
    \caption{Assuming $\pr_w(v_{I_j}) = \pr_w(v_{I_i})$ in $\sigma$, we can see that the cones adjacent to the codimension one type obtained by shrinking e are already in $\curly{U}_w$ by using induction.}\label{moduli_fig_induction}
  \end{figure}

\begin{corollary}\label{moduli_cor_needstable}
 Let $w $ be a weight vector. Then $\pr_w$ contracts all top-dimensional cones of $\mk{n}$ if and only if $w$ does not have at least two heavy entries.
 \end{corollary}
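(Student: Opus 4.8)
The plan is to recast the statement as a dimension count for the image $\pr_w(\mk{n})$ and then to read off that dimension from the combinatorics of the reduced weight graph $G(w)$.

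I would first observe that, because $\mk{n}$ is pure of dimension $n-3$ and $\pr_w$ is induced by a coordinate projection (hence linear), $\pr_w$ contracts \emph{every} top-dimensional cone if and only if $\dim\pr_w(\mk{n})<n-3$. Indeed $\pr_w(\mk{n})$ is the union of the images $\pr_w(\sigma)$ of its top cones $\sigma$, and such an image has dimension $n-3$ exactly when $\pr_w$ is injective on $\sigma$, i.e.\ exactly when $\sigma$ is not contracted; equivalently, by Lemma~\ref{moduli_thm_noninjective}, all top cones are contracted precisely when $\curly{U}_w$ exhausts them. Thus it suffices to prove that $\dim\pr_w(\mk{n})=n-3$ if and only if $w$ has at least two heavy entries.

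The combinatorial heart of the argument is a connectivity criterion for $G(w)$. Since $w_1=1$, vertex $1$ is heavy and $G(w)$ is the subgraph of $G_t(w)$ induced on $\{2,\dots,n\}$; by Definition~\ref{lhw} a \emph{second} heavy entry $i$ is precisely a vertex of $G(w)$ adjacent to every other vertex, i.e.\ a universal vertex. So ``$w$ has at least two heavy entries'' is the same as ``$G(w)$ has a universal vertex.'' I would then prove that $G(w)$ is connected if and only if it has a universal vertex. The nontrivial direction uses the threshold structure of $G(w)$: choosing a vertex $i$ of maximal weight in $\{2,\dots,n\}$, if $i$ fails to be universal, say $w_i+w_j\le 1$, then for every $k$ one has $w_j+w_k\le w_j+w_i\le 1$, so $j$ is isolated and $G(w)$ is disconnected. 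This short weight comparison is the only genuinely new input, and is the step I expect to require the most care to phrase cleanly.

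With the criterion in hand the two implications follow by bookkeeping. If $w$ has at least two heavy entries, then $G(w)$ has a universal vertex and is connected, so $M(G(w))$ (a graphic matroid on $n-1$ vertices) has rank $n-2$ and $\dim B'(G(w))=n-3$; Theorem~\ref{moduli_cor_mainresult} gives $\pr_w(\mk{n})=B'(G(w))$, which therefore has dimension $n-3$, so some top cone survives. Conversely, if $w$ has at most one heavy entry, then $G(w)$ has no universal vertex, hence is disconnected, so $M(G(w))$ has rank at most $n-3$ and $\dim B'(G(w))\le n-4$; since $\pr_w(\mk{n})\subseteq B'(G(w))$ by Remark~\ref{rem-prw}, we conclude $\dim\pr_w(\mk{n})\le n-4<n-3$ and every top-dimensional cone is contracted.
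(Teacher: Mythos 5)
Your reduction to a dimension count is fine, and your connectivity criterion for $G(w)$ (a maximal-weight vertex is either universal or witnesses an isolated vertex) is correct and clean. But the proof has a genuine circularity in the direction that carries the real content. To show that two heavy entries force some top-dimensional cone to survive, you invoke Theorem \ref{moduli_cor_mainresult} to get $\pr_w(\mk{n})=B'(G(w))$. In the paper, however, the logical dependency runs the other way: the proof of Theorem \ref{moduli_cor_mainresult} \emph{cites} Corollary \ref{moduli_cor_needstable} to know that the image of $\pr_w$ has dimension $n-3$, and only then deduces surjectivity onto $B'(G(w))$ from irreducibility of Bergman fans. So the equality you rely on is downstream of the very statement you are proving. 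What is missing is an independent witness that $\pr_w$ is injective on at least one top-dimensional cone; the paper supplies this concretely with the caterpillar tree whose two ends carry the heavy leaves $i$ and $j$, i.e.\ the cone with rays $v_{\{i,i_1\}},v_{\{i,i_1,i_2\}},\dots,v_{\{i,i_1,\dots,i_{n-3}\}}$, where injectivity follows from Lemma \ref{moduli_lemma_wheninjective} because every split contains a heavy entry, so no ray is killed and no two rays are identified. Note that the containment $\pr_w(\mk{n})\subseteq B'(G(w))$, which you correctly use in the converse direction, cannot substitute for this: it only bounds the dimension of the image from above.

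There is also a scope gap. The corollary is stated for arbitrary weight vectors, and it is precisely this corollary that the paper uses to justify the standing convention $w_1=1$; its proof must therefore cover the case in which $w$ has \emph{no} heavy entry at all. In that case $G(w)$ is not defined (there is no heavy vertex to delete), and the identification of $\pr_w$ with a coordinate projection in Remark \ref{rem-prw} is not available, so your entire framework collapses for that case. The paper handles it directly: choosing $j$ with $w_j$ minimal, one gets $w_j+w_k\leq 1$ for all $k\neq j$, so in every $w$-stable trivalent type the leaf $j$ sits between two bounded edges with splits $I$ and $I\cup\{j\}$, and $v_I$, $v_{I\cup\{j\}}$ have the same image under $\pr_w$; hence every top-dimensional cone is contracted. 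Your argument for the exactly-one-heavy-entry case (no universal vertex forces $G(w)$ disconnected, so $\dim B'(G(w))\leq n-4$) is a legitimate and rather elegant alternative to the paper's combinatorial argument there, but the circularity and the missing no-heavy-entry case must both be repaired before the proof stands.
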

 \begin{proof}
  First, assume $w$ has no heavy entry. Choose $j \in [n]$, such that $w_j$ is minimal. Then $w_j + w_k \leq 1 $ for all $k \neq j$: If we assume $w_j + w_k > 1$ for some $k$, then $w_k + w_l > 1$ for all $ l\neq k$ and $k$ is heavy in $w$.
  
In particular, in any trivalent $w$-stable combinatorial type the leaf $j$ can only be at a vertex with two bounded edges $e,e'$ corresponding to rays $v_I, v_{I \cup \{j\}}$. But both rays are mapped to the same image under $\pr_w$. 
   
 Now suppose $w_1$ is the only heavy weight. Again, choose $j$ such that $w_j$ is minimal. As before, we must have $w_j + w_k \leq 1$ for all $k \neq 1,j$. Let $C$ be a trivalent, $w$-stable curve. If leaf $j$ is attached to a vertex with two bounded edges, these edges correspond to rays $v_I, v_{I \cup \{j\}}$ with $1 \notin I$, which are mapped to the same image. In particular, the cone corresponding to $C$ lies in $\curly{U}_w$ by Lemma \ref{moduli_thm_noninjective}. Hence $C$ must have a bounded edge corresponding to the ray $v_{\{1,j\}}$. If we contract this edge, we obtain a curve corresponding to a codimension one cone. But any other adjacent top-dimensional types are in $\curly{U}_w$: It either has a ray $v_{\{j,k\}}, k \neq j$, which makes it unstable, or leaf $j$ is adjacent to two bounded edges. The corresponding cone lies in $\curly{U}_w$ by our previous argument. Hence the cone of $C$ lies in $\curly{U}_w$ as well and thus is contracted under $\pr_w$ by Lemma \ref{moduli_thm_noninjective}.
   
   Conversely, assume $w$ has heavy entries $i$ and $j$. Then we can easily construct a trivalent curve whose cone does not lie in $\curly{U}_w$: Let $\{i_1,\dots,i_{n-2}\} = [n]\wo \{i,j\}$ be in some arbitrary but fixed order. Then it is easy to see that $\pr_w$ is injective on the cone corresponding to the \textit{caterpillar tree} (see Figure~\ref{geom_fig_losevmanin}) with edges $v_{\{i,i_1\}},v_{\{i,i_1,i_2\}}, \dots,v_{\{i,\dots,i_{n-3}\}}$.
   
 \end{proof}

We now have the necessary ingredients to prove Theorem \ref{moduli_cor_mainresult}.

 \begin{proof}[Proof of Theorem \ref{moduli_cor_mainresult}]
  The dimension of $B'(G(w))$ equals $\textnormal{rank}(G(w)) -1$. By assumption, $G(w)$ is a connected graph, so its rank is just the number of its vertices minus one. In total, we obtain $\dim B'(G(w)) = n-3 = \dim \mk{n}$. By Corollary \ref{moduli_cor_needstable}, the dimension of the image of $\pr_w$ is also $n-3$. Since $B'(G(w))$ is irreducible, $\pr_w$ is surjective. Using Lemma \ref{moduli_lemma_nested}, it is easy to see that two different top-dimensional cones on which $\pr_w$ is injective are mapped to distinct cones of the nested set subdivision of $B'(G(w))$.
 \end{proof}

We now study balanced fan structures on $\mk{w}$. 


\begin{prop}\label{prop-nocontract}
 Let $w $ be a weight vector with at least two heavy entries. Then $\curly{U}_w = \curly{U}_w^0$ if and only if every $i \in [n]$ is either heavy or small in $w$. 
 \begin{proof}
  First, assume that $\curly{U}_w = \curly{U}_w^0$. Assume there is an entry $i$, which is neither small nor heavy in $w$. Hence, there must be a $j$, such that $w_i + w_j > 1$, but $j$ is not heavy (in particular, it is also not small in $w$). It follows that there must be a $k \neq i,j$, such that $w_i + w_k, w_j + w_k \leq 1$. 
  
  For now assume $n \geq 5$. Let $a,b$ be two heavy entries of $w$ and fix an order $\{i_1,\dots,i_{n-5}\}$ on $[n] \wo \{i,j,k,a,b\}$. Now let $\sigma$ be the cone with rays $$v_{\{i,j\}}, v_{\{i,j,k\}},v_{\{i,j,k,i_1\}}\dots,v_{\{i,j,k,i_1,\dots,i_{n-5}\}}$$ (see also Figure \ref{moduli_fig_stillunstable}). The corresponding combinatorial type is clearly $w$-stable, but $\pr_w(v_{\{i,j\}}) = \pr_w(v_{\{i,j,k\}})$. This is a contradiction.
  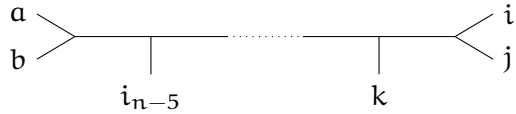
\begin{figure}[ht]
   \centering
   \begin{tikzpicture}
    \draw (0,0) -- (2,0);
    \draw[dotted] (2,0) -- (3,0);
    \draw (3,0) -- (5,0);
    \draw (0,0) -- (-0.5,0.3) node[left]{$a$};
    \draw (0,0) -- (-0.5,-0.3) node[left]{$b$};
    \draw (1,0) -- (1,-0.5) node[below]{$i_{n-5}$};
    \draw (4,0) -- (4,-0.5) node[below]{$k$};
    \draw (5,0) -- (5.5,0.3) node[right]{$i$};
    \draw (5,0) -- (5.5,-0.3) node[right]{$j$};
   \end{tikzpicture}
   \caption{Constructing a $w$-stable type that lies in $U_w$.}\label{moduli_fig_stillunstable}
  \end{figure}
  
  If $n = 4$, then only the cone spanned by $v_{\{i,j\}}$ corresponds to a $w$-stable type, so by definition it must lie in $\curly{U}_w$.
  
  To see the converse, we can assume that $w$ is of the form $(1^f, \epsilon^t)$ with $f \geq 2$. It suffices to show that $\curly{U}_w^1 = \emptyset$, i.e.\ after removing all $w$-unstable types, all remaining codimension one combinatorial types still have at least two resolutions. This is clear and the claim follows immediately.
 \end{proof}
\end{prop}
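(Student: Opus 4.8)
The plan is to treat the two implications separately, and in both cases to reduce everything to a local analysis around a codimension one cone. The observation I would isolate first is a description of $\curly{U}_w^1$. A codimension one type $\tau$ of $\mk{n}$ has a unique four-valent vertex $v$ with four adjacent branches, and its three neighboring top-dimensional types are the three resolutions of $v$. Such a resolution introduces a new bounded edge splitting the four branches into two pairs, and it is $w$-unstable precisely when one of these pairs consists of two leaf-ends $a,b$ with $w_a+w_b\le 1$ (if a pair contains a branch that is a bounded edge, the new vertex automatically contributes more than $2$ to the stability count). Thus the stability of the three resolutions of $v$ is governed entirely by the graph of \emph{bad pairs} $\{a,b\}$ with $w_a+w_b\le 1$ among the leaf-branches at $v$. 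I would also record the elementary fact that once $\curly{U}_w^1=\emptyset$, the recursion forces $\curly{U}_w^k=\emptyset$ for all $k\ge 1$: if $\curly{U}_w^1=\dots=\curly{U}_w^k=\emptyset$, then $\bigcup_{j\le k}\curly{U}_w^j=\curly{U}_w^0$, so the defining condition for $\curly{U}_w^{k+1}$ collapses to that of $\curly{U}_w^1$. Hence the proposition reduces to showing that $\curly{U}_w^1=\emptyset$ if and only if every entry is heavy or small.

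For the converse, assume every entry is heavy or small. Since only the bad pairs enter the analysis, and these are the same for $w$ as for $(1^f,\epsilon^t)$ with $f\ge 2$ and $\epsilon$ small (heavy leaves form no bad pair in either, two small/light leaves form a bad pair in either), I may argue with the latter, or equivalently just use the structural fact that in the heavy/small case the bad pairs among leaf-branches form a \emph{clique}. If a four-valent vertex $v$ has $m$ light leaf-branches, the bad pairs are exactly the $\binom{m}{2}$ pairs among them; one then checks that the number of unstable resolutions of $v$ is $0$ (if $m\le 1$), $1$ (if $m=2$), or $3$ (if $m\ge 3$), but never exactly $2$. Consequently a stable type can never sit opposite two unstable types across a codimension one face: if $m\ge 3$ all three resolutions are unstable, so $\tau$ has no stable neighbor at all; and if $m\le 2$ at most one resolution is unstable, so any stable neighbor of $\tau$ has at least one further stable neighbor. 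Either way no stable cone lands in $\curly{U}_w^1$, giving $\curly{U}_w^1=\emptyset$.

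For the forward implication I argue the contrapositive: supposing some $i$ is neither heavy nor small, I produce a stable cone in $\curly{U}_w\wo\curly{U}_w^0$. First I extract a triple. Since $i$ is not small there is a $j$ with $w_i+w_j>1$ and $j$ not heavy; letting $p\in\{i,j\}$ be the index of larger weight, non-heaviness of $p$ gives some $k$ with $w_p+w_k\le 1$, whence $w_k\le 1-w_p$ and also $w_q+w_k\le 1$ for the other index $q$, and one sees $k\ne i,j$. With the two heavy entries $a,b$ (necessarily distinct from $i,j,k$) and $n\ge 5$, I build the caterpillar cone $\sigma$ with rays $v_{\{i,j\}},v_{\{i,j,k\}},v_{\{i,j,k,i_1\}},\dots$, placing $i,j$ at one end and $a,b$ at the other. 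The vertex carrying $i,j$ is stable because $w_i+w_j>1$, every other vertex carries a single leaf or the heavy pair $a,b$, so $\sigma$ is $w$-stable, i.e.\ $\sigma\notin\curly{U}_w^0$. On the other hand $F_{\{i,j\}}\cap G(w)=F_{\{i,j,k\}}\cap G(w)$ is the single edge $ij$, since $w_i+w_k,w_j+w_k\le 1$ delete the edges $ik,jk$; hence $\pr_w(v_{\{i,j\}})=\pr_w(v_{\{i,j,k\}})$ and $\pr_w$ is not injective on $\sigma$. By Lemma \ref{moduli_thm_noninjective}, $\sigma\in\curly{U}_w$, so $\curly{U}_w\ne\curly{U}_w^0$. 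The case $n=4$ is degenerate and handled directly.

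The main obstacle I anticipate is the converse: its real content is the structural fact that heavy/small weights make the bad-pair graph a clique, together with the combinatorial claim that a clique on four branches can never produce exactly two unstable resolutions. Everything else — the equivalence $\curly{U}_w=\curly{U}_w^0\Leftrightarrow\curly{U}_w^1=\emptyset$, the local resolution dictionary, and the caterpillar construction — is bookkeeping, though the extraction of the triple $i,j,k$ in the forward direction requires the small but careful comparison of $w_i$ and $w_j$ indicated above.
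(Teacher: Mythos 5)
Your proof is correct and takes essentially the same route as the paper: the forward direction uses the identical triple $i,j,k$ and caterpillar cone together with Lemma \ref{moduli_thm_noninjective} (with your comparison of $w_i$ and $w_j$ supplying the step the paper states as ``it follows that there must be a $k$''), and the converse is the paper's same reduction to showing $\curly{U}_w^1=\emptyset$ for weights whose bad-pair graph agrees with that of $(1^f,\epsilon^t)$. Your $0/1/3$ count of unstable resolutions fleshes out what the paper dismisses as ``clear''; the only word of care needed is that a resolution can also be unstable because of a bad pair away from the four-valent vertex, but in that case all three resolutions are unstable simultaneously, so $\tau$ has no stable neighbor and still contributes nothing to $\curly{U}_w^1$.
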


\begin{theorem}\label{thm-heavyandlight}
Let $w$ be heavy/light, with at least two heavy entries. The cone complex underlying $\pr_w(\mk{n})=B'(G(w))$ is naturally identified with $M_{0,w}^{\trop}$. In particular, this complex has the structure of a balanced fan. If $w$ is not heavy/light, then there does not exist a balanced embedding of $\mk{w}$ into a vector space.
\end{theorem}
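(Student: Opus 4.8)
The plan is to prove both implications by pinpointing, for each weight vector, the exact obstruction to balancing, and to phrase every obstruction as an intrinsic property of the abstract cone complex $\mk{w}$, so that it rules out \emph{all} embeddings at once and not merely the projection $\pr_w$.

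For the forward direction, assume $w$ is heavy/light. Since $\sum w_i>2$, at most one non-heavy mark together with small marks of total weight $<1$ could never exceed $2$, so $w$ automatically has at least two heavy entries and Theorem \ref{moduli_cor_mainresult} applies, giving $\pr_w(\mk{n})=B'(G(w))$. As heavy/light implies heavy/small, Proposition \ref{prop-nocontract} yields $\curly{U}_w=\curly{U}_w^0$, whence by Lemma \ref{moduli_thm_noninjective} the map $\pr_w$ is injective on every $w$-stable top-dimensional cone. The decisive extra input is \emph{purity}: I would show that the hypothesis that the small marks have total weight $<1$ forces every $w$-stable curve to degenerate from a trivalent $w$-stable curve. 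Concretely one resolves each vertex $V$ by a caterpillar whose two end-cherries each carry a \emph{stabilizer} (a heavy leaf or a bounded edge); a vertex carrying only small leaves is stable only if it has at least two bounded edges, precisely because its leaf-weight is then $<1$, and the two-or-more heavy marks take care of the all-leaf star vertex. Granting purity, every cell of $\mk{w}$ is a face of a $w$-stable top cone on which $\pr_w$ is injective, so $\pr_w$ is injective on all of $\mk{w}$; since the non-$w$-stable cones of $\mk{n}$ collapse to lower-dimensional sets, the $w$-stable top cones already cover the pure fan $B'(G(w))$, giving surjectivity. Hence $\pr_w\colon\mk{w}\xrightarrow{\sim}B'(G(w))$ is an isomorphism of cone complexes, realizing $\mk{w}$ as a balanced fan.

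For the converse, I would write the failure of heavy/light as the disjunction of two cases: (A) some mark is neither heavy nor small, and (B) $w$ is heavy/small but the total weight of the small marks is $\ge 1$. In case (B) the goal is to show $\mk{w}$ is not pure-dimensional, which is incompatible with being a balanced fan, as these are pure by definition. Choose a minimal set $S'$ of small marks with $\sum_{i\in S'}w_i>1$; since any two small marks have weight-sum $\le 1$, necessarily $\abs{S'}\ge 3$. The curve carrying $S'$ at a single vertex $V$ (with one bounded edge, the complement completed stably) is $w$-stable, yet every trivalent resolution of $V$ produces a cherry of two small marks, which is unstable; thus this cell lies in no $w$-stable top cone and $\mk{w}$ fails to be pure. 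In case (A) I pick $i$ neither heavy nor small, then $j$ with $w_i+w_j>1$ and $j$ not heavy, and then $k$ with $w_i+w_k\le1$ and $w_j+w_k\le1$. The plan is to exhibit a $w$-stable codimension-one type $\tau$ having a single four-valent vertex that supports $i,j,k$ and one bounded edge: of its three resolutions only the one separating $\{i,j\}$ is $w$-stable. This is exactly the \enquote{univalent} codimension-one face of Example \ref{moduli_ex_badtypes} and Figure \ref{moduli_fig_onlyonestable} — a codimension-one cell with a unique coface — at which the balancing condition can never hold, in any embedding. (If the remaining marks cannot be arranged into a stable trivalent subtree when building $\tau$, then $\mk{w}$ is already non-pure and the conclusion of case (B) applies; the low-$n$ and few-heavy configurations are verified directly.)

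The main obstacle is the purity step of the forward direction: this is precisely where the full force of \emph{light} rather than merely \emph{small} is needed, and it is the mechanism that simultaneously excludes the two failure modes of the converse — the collapse of a stable cell (non-purity, case (B)) and an unbalanceable codimension-one face (case (A)). A secondary but genuine subtlety is to insist that both converse obstructions are intrinsic to the abstract cone complex $\mk{w}$: non-purity and the presence of a univalent codimension-one face are manifestly preserved by any face-preserving realization of $\mk{w}$ as a fan, so they forbid every balanced embedding into a vector space and not only the one supplied by $\pr_w$.
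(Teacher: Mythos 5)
Your proposal is correct and follows essentially the same route as the paper: the forward direction combines Theorem \ref{moduli_cor_mainresult}, Lemma \ref{moduli_thm_noninjective}, and Proposition \ref{prop-nocontract} with a purity argument that hinges on the small marks being light, and the converse splits into exactly the paper's two cases — non-purity (via a minimal set of small marks of total weight $>1$, contrasted with the stable heavy-ended caterpillar) when $w$ is heavy/small but not light, and a univalent codimension-one face (the configuration of Example \ref{moduli_ex_badtypes}, which is also what the proof of Proposition \ref{prop-nocontract} produces) when some mark is neither heavy nor small. The only cosmetic difference is that you establish purity constructively, by resolving each vertex of a stable curve into a stable trivalent caterpillar, whereas the paper argues by contradiction that a stable cone all of whose trivalent cofaces are unstable would have to contain an all-light (hence itself unstable) ray; both hinge on the same point.
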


\begin{proof}
Suppose $w$ is a weight vector with only heavy and light entries. It follows from Lemma \ref{moduli_thm_noninjective} that $\pr_w$ is injective on all cones which are not in $\curly{U}_w$. We can deduce from Proposition~\ref{prop-nocontract} that $\curly{U}_w=\curly{U}_w^0$. In other words, $\pr_w$ contracts only the top-dimensional cones we want to contract to pass from $\mk{n}$ to $\mk{w}$. The fact that the small points are light guarantees that $\mk{w}$ is pure dimensional. To see this, consider a cone whose top dimensional faces are contracted. Such a cone is spanned by vectors $v_{I_j}$, and there must exist at least one vector such that all $k\in I_j$ are light. Analyzing the projections, we see that $B'(G(w))$ is identified with $M^{\trop}_{0,w}$. 

Consider a weight vector $w$ that is not of heavy/light type. We first deal with the case where $w$ has heavy and small points. Recall $i$ is small if $w_i+w_j>1$ implies that $j$ is heavy. In this situation, there is a subset $I$ of size at least three of small points satisfying $\sum_{k\in I}w_k>1$, but for any subset $I_0\subsetneq I$, $\sum_{k\in I_0}w_k<1$. Observe that there is a cone $\tau$ of $\mk{w}$ of maximal possible dimension containing the ray $v_I$ such that all its higher-dimensional faces of $\tau$ in $\mk{n}$ are not $w$-stable. 

Since we have at least two heavy points, the top-dimensional cone of $\mk{n}$ corresponding to the caterpillar tree with the two heavy weights on the two sides as in the proof of Corollary \ref{moduli_cor_needstable} is $w$-stable, and hence is also a top dimensional cone of $\mk{w}$. We can now see that $\mk{w}$ is not pure dimensional, and cannot be embedded as a balanced fan.
Assume now that $w$ has not only heavy and small points. By Proposition  \ref{prop-nocontract}, $\curly{U}_w\neq \curly{U}_w^0$, so there are cones of codimension one with only one adjacent top-dimensional cone. These cones cannot be embedded in a balanced way.
\end{proof}

\section{Tropicalizing spaces of rational weighted stable curves}\label{sec: trop-wsc}
Throughout, we work in the ``constant coefficient'' case, i.e. over $\mathbb C$ with the trivial valuation. We have seen that for a vector of heavy and light weights, we obtain a fan structure for the tropical moduli space $\mk{w}$. In this section, we  show that this fan yields a toric variety in which we can embed $\overline{M}_{0,w}$, and the tropicalization of the open part living inside the torus can be given a canonical fan structure, making it isomorphic to $\mk{w}$. If we have not only heavy and light weights, then we still have a map from $\overline{M}_{0,w}$ to the toric variety defined by $\pr_w(\mk{n})$, but it contracts some boundary strata.

\subsection{Geometric tropicalization for \texorpdfstring{$\overline M_{0,n}$}{M0,n}}
In \cite[Example 3.1]{GM07}, the locus of smooth curves $M_{0,n}$ is identified with the quotient of an open set of the Grassmannian, denoted $G^0(2,n)$, by an $n-1$ dimensional torus. The open set $G^0(2,n)$ corresponds to the $2$ planes that do not pass through the intersection of a pair of coordinate planes.

The Grassmannian $G(2,n)$ embeds into $\P^{\binom{n}{2}-1}$ via the Pl\"ucker embedding, associating to a $2$-plane given by the data of a $2\times n$ matrix (after a choice of basis) its minors. This embedding carries the open part $G^0(2,n)$ to points in the torus of $\P^{\binom{n}{2}-1}$. As a consequence, $M_{0,n}$ is embedded into the torus $(T^{\binom{n}{2}}/T)/T^{n-1}\cong T^{\binom{n}{2}-n}$ using the Pl\"ucker embedding. 

Note that the action of the $T^{n}$ torus corresponds precisely to the lineality space $\Phi(\R^n)$ that we quotient by embedding $\mk{n}$ into $\R^{\binom{n}{2}-n}$ (see Section \ref{prelim-trop}). Comparing coordinates on the algebraic and tropical sides, we can effectively neglect the action of $T^{n}$ on one side and the lineality space on the other. Furthermore, the Pl\"ucker coordinates give us the distance coordinates in tropical geometry directly.

Keeping with the discussion in previous sections, we recall that the tropical moduli space $M_{0,n}^{\trop}$ comes with an embedding in a vector space, and a natural fan structure. Fix this fan structure. Then we have the following result, due to Gibney and Maclagan~\cite[Theorem 5.7]{GM07}, as well as Tevelev~\cite[Theorem 5.5]{Tev07}
\begin{theorem}
Consider the embedding of $M_{0,n}$ into the torus $T^{\binom{n}{2}-n}$ described above. The closure of $M_{0,n}$ in the toric variety $X(M_{0,n}^{\trop})$ is the 
compactification $\overline M_{0,n}$. Furthermore, the tropicalization of $M_{0,n}$ in this torus is $\mk{n}$.
\end{theorem}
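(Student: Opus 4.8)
The plan is to establish the two assertions separately, handling the tropicalization first and then the compactification, both by routing through the Plücker/Grassmannian description recalled above.

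For the tropicalization, I would exploit the identification of $M_{0,n}$ with an open subset of $G(2,n)$ modulo torus, so that the Plücker coordinates $p_{ij}$ serve simultaneously as the ambient torus coordinates and as the tropical distance coordinates. The ideal of the image is generated by the three-term Plücker relations $p_{ij}p_{kl} - p_{ik}p_{jl} + p_{il}p_{jk}$, one for each four-element subset $\{i,j,k,l\}$. By the Fundamental Theorem of tropical geometry in the constant-coefficient setting, a vector $d = (d_{ij})$ lies in $\trop(M_{0,n})$ if and only if no initial form is a monomial; for these relations this says exactly that, for every quadruple, the minimum among $d_{ij}+d_{kl}$, $d_{ik}+d_{jl}$, $d_{il}+d_{jk}$ is attained at least twice. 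This is the four-point condition characterizing tree metrics, so $\trop(M_{0,n})$ is the space of phylogenetic trees (cf. \cite{SS04a}). Modding out by the lineality space $\Phi(\R^n)$ — which on the algebraic side is precisely the quotient by the $T^n$-actions already built into the embedding — this space becomes $\mk{n}$ in its $\dist(i,j)$-coordinates. I would check in coordinates that a split $I \mid I^c$ yields exactly the ray $v_I$.

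For the compactification, I would invoke the theory of tropical compactifications due to Tevelev \cite{Tev07}. The essential input is that $M_{0,n}$ is \emph{sch\"on}: every initial degeneration $\textnormal{in}_w(M_{0,n})$ is smooth. Granting this, the closure $Y$ of $M_{0,n}$ in the toric variety $X(\mk{n})$ is proper, because the fan is supported on all of $\trop(M_{0,n})$, and the boundary $Y \setminus M_{0,n}$ is a simple normal crossings divisor whose stratification is governed by the orbit--cone correspondence: a cone $\sigma$ of $\mk{n}$ meets the torus orbit whose closure is the stratum indexed by $\sigma$, with codimension equal to $\dim \sigma$. The final step is to match this with the stratification of $\overline M_{0,n}$: a cone of $\mk{n}$ is a combinatorial tree type with $k$ bounded edges, equivalently a stable dual tree, and the corresponding stratum is the locus of curves with that dual graph; in particular the ray $v_I$ corresponds to the boundary divisor $\delta_I$ of curves with a node separating $I$ from $I^c$. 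Since both $Y$ and $\overline M_{0,n}$ are smooth compactifications of $M_{0,n}$ with the same boundary combinatorics, they are canonically isomorphic (cf. \cite{GM07}).

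The main obstacle is establishing sch\"onness and, relatedly, pinning down that the closure is $\overline M_{0,n}$ itself rather than merely some smooth compactification with matching combinatorics. Proving that every initial degeneration is smooth requires controlling the initial ideals of the Plücker ideal across all cones of $\mk{n}$; one reduces to the recursive structure of the tree, where each initial degeneration again looks like a product of smaller copies of $M_{0,m}$, and this recursion is where the real work lies. Once sch\"onness is in hand, identifying the closure with $\overline M_{0,n}$ is cleanest via the universal property: the universal family over $M_{0,n}$ extends over $Y$ to a family of stable curves whose dual graphs realize each boundary cone, inducing a morphism $Y \to \overline M_{0,n}$ that is an isomorphism over the dense open locus and respects the stratification, hence an isomorphism.
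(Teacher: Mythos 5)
Your outline is essentially sound, but note first that the paper does not actually prove this theorem: it is quoted as a known result of Gibney--Maclagan \cite{GM07}*{Theorem 5.7} and Tevelev \cite{Tev07}*{Theorem 5.5}, and the surrounding text instead explains the technique of \emph{geometric tropicalization}. Your route is the ``forward'' one of the original sources: compute $\trop(M_{0,n})$ from equations --- using that the three-term Pl\"ucker relations form a tropical basis, so the four-point condition cuts out the space of trees as in \cite{SS04a}, then quotient by the lineality space --- and then invoke sch\"onness and Tevelev's theory of tropical compactifications to control the closure. The paper's preferred mechanism, which it actually carries out for its weighted generalization in Theorem \ref{thm-tropicalizing}, runs in the opposite direction: one starts from the \emph{known} compactification $\overline M_{0,n}$ with its simple normal crossings boundary, reads off the fan as the image of the dual intersection complex under divisorial valuations (with the weight function of \cite{Cue11}), and then identifies the closure in the toric variety via the Hacking--Keel--Tevelev criterion \cite{HKT09}: cross-ratio units with valuation $1$ on a given boundary divisor and $0$ on the others, together with very affine strata and surjectivity of the restriction map on units. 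Your approach buys independence from prior knowledge of the boundary structure of $\overline M_{0,n}$, but pays for it with sch\"onness, which you rightly isolate as the hard step; the geometric tropicalization approach exploits that boundary structure and is what generalizes cleanly to the Hassett spaces $\overline M_{0,w}$, where no analogous tropical basis is available.

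One local repair is needed. The claim that ``both $Y$ and $\overline M_{0,n}$ are smooth compactifications of $M_{0,n}$ with the same boundary combinatorics, hence canonically isomorphic'' is not a valid principle: matching boundary combinatorics does not by itself force an isomorphism of compactifications. Your subsequent universal-family argument is the correct fix, but it must be completed: the extended family of stable curves over $Y$ induces a proper birational morphism $Y\to \overline M_{0,n}$, and one concludes it is an isomorphism by checking it is quasi-finite (e.g.\ bijective on strata, by the orbit--cone correspondence matched against dual graphs) and then applying Zariski's main theorem together with normality of the target.
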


The essential ingredient in the proof of this result is the understanding of the combinatorial structure of $M^{\trop}_{0,n}$ and in particular its relationship to the boundary stratification of the classical moduli spaces $\overline M_{0,n}$. The results provide a beautiful explanation of the various analogies and combinatorial dualities between the tropical and classical moduli space in question. In Theorem~\ref{thm-tropicalizing}, we obtain analogous results for spaces of weighted stable curves. 

The result can be obtained with the help of a technique that is now known as \textit{geometric tropicalization} -- initially used to study compactifications of subvarieties of tori in~\cite{Tev07}. The technique was elaborated upon and applied to understand compactifications of moduli space of del Pezzo surfaces in~\cite{HKT09}. An accessible introduction to the topic can be found in \cite{Cue11, MS09}.

Geometric tropicalization starts with a variety $X$ together with a simple normal crossing boundary divisor $D$ (such as $\overline{M}_{0,n}$ with its usual boundary). When the complement $U$ of $D$ in $X$ has many invertible functions, it admits a map to a torus: 
\[
\iota: U\to T.
\]
In ideal situations (and indeed, in our situation) this map is an embedding. The map $\iota$ may then be used to produce a map from the dual intersection complex $\Sigma$ of $(X, D)$ to the vector space of one parameter subgroups of $T$, thus furnishing a fan structure on $\Sigma$.

In~\cite{Cue11}, Cueto equips the top dimensional cones of $\Sigma$ with a  weight function which produces a balanced fan. This fan furnishes a toric variety with dense torus $T$, in which we may consider the closure of $\iota(U)$. Geometric tropicalization studies the relationship between $X$ and $\overline {\iota(U)}$. 


\subsection{Geometric tropicalization for spaces of weighted stable curves}\label{subsec: geom-trop}
We assume that $w$ is a weight vector with only heavy and light weights, that there are at least $2$ heavy weights, and without loss of generality, that the heavy entries include the first two. In other words, throughout this section, we fix $w = (1^f,\epsilon^t)$ where $\epsilon$ is light. We have seen already that the assumption of having at least two heavy weights makes sense in tropical geometry, as without it, the tropical moduli space is of incorrect dimension. On the algebraic side, this requirement is also natural for weight vectors of the form $w = (1^f,\epsilon^t)$, since otherwise, the locus of smooth curves in $\overline M_{0,w}$ would be empty.

We apply the geometric tropicalization techniques discussed above in the context of heavy/light moduli spaces of rational pointed curves. Note that the compactification of $M_{0,n}$ to $\overline{M}_{0,w}$ is \textit{not}, in general, simple normal crossing and may be locally a more complicated hyperplane arrangement. The following observation is crucial, and its proof is identical to the $\overline M_{0,n}$ case. An elegant proof in this setting may be found in~\cite[Theorem 1.1]{U}.

\begin{prop}
Let $M_{0,w}$ denote the locus of smooth curves in $\overline M_{0,w}$. Then, the boundary $\overline M_{0,w}\setminus M_{0,w}$ is a divisor with simple normal crossings.
\end{prop}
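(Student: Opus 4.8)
The plan is to reduce the statement to the standard local deformation-theoretic description of the boundary, exactly as in the $\overline M_{0,n}$ case. The starting point is that the Hassett space $\overline M_{0,w}$ is a smooth projective variety of dimension $n-3$ (Hassett~\cite{Has03}), and that $\overline M_{0,w}\setminus M_{0,w}$ is precisely the locus of curves possessing at least one node. First I would identify the irreducible boundary divisors: to each split $[n]=I\amalg I^c$ that is $w$-admissible on both sides there is a divisor $D_I$ parametrizing curves with a separating node whose two branches carry the markings of $I$ and of $I^c$ respectively. Each $D_I$ is the image of a gluing morphism and is isomorphic to a product of smaller Hassett spaces $\overline M_{0,w_I\cup\{\star\}}\times \overline M_{0,w_{I^c}\cup\{\star\}}$, hence smooth; a generic boundary point carries a curve with exactly one node, so the boundary is pure of codimension one.

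The heart of the argument is transversality, which I would establish by deformation theory. Fix a boundary point $[C]$ and let $q_1,\dots,q_k$ be the nodes of $C$. Standard deformation theory of a nodal marked curve gives an \'etale-local decomposition of the versal deformation of $(C,p_1,\dots,p_n)$ as a product of the $k$ one-parameter node-smoothing deformations, with parameters $t_1,\dots,t_k$, and the deformations of the pointed components (the loci where marked points move on each $\P^1$, including permitted collisions of light markings). Since $\overline M_{0,w}$ is smooth of the expected dimension, these parameters assemble into an honest system of local coordinates at $[C]$, and the divisor $D_{I_e}$ corresponding to the node $q_e$ is cut out locally by $\{t_e=0\}$.

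It then follows that, in this chart, the boundary is $\bigcup_{e=1}^{k}\{t_e=0\}$, a union of $k$ distinct coordinate hyperplanes, so the $k$ branches through $[C]$ are smooth and meet transversally; this is exactly the simple normal crossing condition, verified at every boundary point. The argument is formally identical to the $\overline M_{0,n}$ case: the only features special to the weighted setting --- that interior components may be of lower valence and that light markings may coincide --- live entirely in the component-moduli factor of the versal deformation, which is transverse to and independent of the node-smoothing factor. I expect the single point requiring care to be precisely this independence: one must check that the collisions of light points, which are the genuinely non-$\overline M_{0,n}$ phenomenon and which do produce non-normal-crossing (braid-type) loci, occur inside the open locus $M_{0,w}$ of smooth curves and hence never meet the boundary. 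The heavy/light hypothesis --- under which heavy markings never collide and light markings collide only with one another --- keeps these collision loci confined to the component-moduli directions and away from the node-smoothing parameters that cut out the boundary divisors, which is what makes the statement hold despite the compactification failing to be simple normal crossing relative to the locus of distinct markings.
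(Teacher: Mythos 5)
Your proof is correct and takes essentially the same route as the paper, which simply asserts that the proof is identical to the $\overline M_{0,n}$ case: the deformation parameters of the nodes furnish local analytic coordinates on $\overline M_{0,w}$ in which the boundary is a union of coordinate hyperplanes (this is exactly the content of your versal-deformation argument, and it is what the paper records in the remark on the stratification immediately afterwards). Your closing point --- that collisions of light markings live in the component-moduli directions, are not part of the nodal boundary, and hence do not spoil the simple normal crossing property even though they make the compactification of $M_{0,n}$ inside $\overline M_{0,w}$ non-snc --- is precisely the distinction the paper draws in the remark following the proposition.
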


\begin{remark}
In other words, we consider the ``interior'' of the moduli space $\overline M_{0,w}$ as not only points of $M_{0,n}$ but also loci of marked curves where the underlying curve is smooth, but the markings are not distinct. We warn the reader that the Hassett spaces $\overline{\mathcal{M}}_{g,w}$ are usually \textit{not} toroidal compactifications of $\mathcal{M}_{g,w}$, however, the locus of non-smooth curves is always a divisor with (stacky) normal crossings.
\end{remark}

\begin{remark}~\label{rem: m0w-boundary}
The inclusion $M_{0,w}\hookrightarrow \overline M_{0,w}$ induces a stratification into locally closed strata, which coincides with the stratification by dual graph: the codimension $k$ strata of $\overline M_{0,w}$ are the loci of curves, with fixed dual graph, having $k$ nodes. Locally analytically near a stratum $S$, there is a collection of monomial coordinates on $\overline M_{0,w}$ given by the deformation parameters for the nodes of the curves parametrized by $S$. 
\end{remark}

\begin{defn}
 Let $\Pr_w$ be the projection from the torus $T^{\binom{n}{2}}/T^n$ dropping all the Pl\"ucker coordinates indexed by $i\neq j\in [n]$  for which $w_i=w_j=\epsilon$.
\end{defn}

\begin{lemma}\label{lem-prw}
 The tropicalization of the map $\Pr_w$ agrees with the projection $\pr_w$ from $\R^{\binom{n}{2}-n}$ (see Remark \ref{rem-prw}).
\end{lemma}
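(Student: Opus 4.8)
The plan is to compare two maps on the level of coordinates, since both $\Pr_w$ and $\pr_w$ are defined by forgetting a prescribed subset of coordinates. The statement of Lemma~\ref{lem-prw} asserts that tropicalizing the algebraic projection $\Pr_w$ reproduces the combinatorial projection $\pr_w$, so the natural strategy is to set up the dictionary between the Pl\"ucker coordinates on the algebraic side and the distance coordinates on the tropical side, and then verify that the \emph{same} index set is being dropped in both cases. First I would recall, from the discussion of geometric tropicalization for $\overline M_{0,n}$ in the preceding subsection, that the embedding $M_{0,n}\hookrightarrow T^{\binom n2}/T^n$ via the Pl\"ucker coordinates $p_{ij}$ tropicalizes so that the coordinate indexed by $\{i,j\}$ corresponds precisely to the distance coordinate $\dist(i,j)$; this is exactly the observation in Section~\ref{prelim-trop} that the Pl\"ucker coordinates give the distance coordinates directly, and that the torus action $T^n$ matches the lineality $\Phi(\R^n)$ that we quotient by.

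With this identification in hand, I would then simply track which coordinates each map discards. By definition, $\Pr_w$ drops exactly the Pl\"ucker coordinates $p_{ij}$ with $w_i=w_j=\epsilon$, i.e.\ those indexed by pairs of light points. On the tropical side, the projection $\pr_w=\widetilde\pr_w$ of Remark~\ref{rem-prw} is the map $B'(K_{n-1})\to B'(G(w))$ that forgets the coordinates corresponding to edges of $K_{n-1}$ not lying in $G(w)$. The key point is that an edge $\{i,j\}$ of $K_{n-1}$ fails to lie in the reduced weight graph $G(w)$ if and only if $w_i+w_j\le 1$, and under the heavy/light hypothesis with the labelling convention $w_1=1$, a pair $\{i,j\}$ of vertices in $G(w)$ (so $i,j\neq 1$) is non-adjacent precisely when both $i$ and $j$ are light, since any pair involving a heavy point is adjacent and two light points have total weight $\le 1$. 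Hence the dropped coordinates on the tropical side are again exactly the pairs of light points.

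The remaining step is to confirm that tropicalization is functorial with respect to the coordinate projection, so that $\trop(\Pr_w)=\pr_w\circ\trop$: a monomial (here a coordinate) projection of tori tropicalizes to the corresponding linear coordinate projection of the cocharacter lattices, and this is compatible with the quotient by $T^n$ on the algebraic side and by $\Phi(\R^n)$ on the tropical side. Since both maps are the linear projection $\R^{\binom n2-n}\to\R^{\abs{E(G(w))}-?}$ killing exactly the coordinates indexed by pairs of light points, they coincide. I do not expect any serious obstacle here; the content is essentially bookkeeping, and the only point requiring care is the \emph{matching of index sets}, i.e.\ checking rigorously that ``$w_i=w_j=\epsilon$'' and ``$\{i,j\}\notin E(G(w))$ with $i,j\neq 1$'' describe the same set of pairs. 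This uses the heavy/light assumption and the convention that the deleted vertex defining $G(w)$ is the heavy point $1$; once that equivalence is spelled out, the identification of the two tropicalized maps is immediate.
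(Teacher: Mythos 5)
Your overall strategy---set up the dictionary between Pl\"ucker and distance coordinates, match the index sets of dropped coordinates, and invoke functoriality of tropicalization for monomial maps---is the same bookkeeping strategy the paper follows, and your index matching is correct: under the heavy/light hypothesis with $w_1=1$ heavy, the pairs dropped by $\Pr_w$ and the non-edges of $G(w)$ are both exactly the pairs of light points, none of which contain $1$. However, there is a genuine gap at the concluding step, where you assert that ``both maps are the linear projection $\R^{\binom{n}{2}-n}\to\R^{\abs{E(G(w))}-?}$ killing exactly the coordinates indexed by pairs of light points, so they coincide.'' The two maps are a priori projections in two \emph{different} coordinate presentations. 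The tropicalization of $\Pr_w$ forgets distance/Pl\"ucker coordinates, which are indexed by \emph{all} pairs $\{i,j\}\subset[n]$ (including the pairs containing $1$, which $\Pr_w$ retains) and live on the quotient $\R^{\binom{n}{2}}/\Phi(\R^n)$ by an $n$-dimensional lineality space; the map $\pr_w$ of Remark~\ref{rem-prw} forgets edge coordinates of $B'(K_{n-1})\subset\R^{\binom{n-1}{2}}/\langle(1,\dots,1)\rangle$, indexed by pairs \emph{not} containing $1$, modulo a one-dimensional lineality space. Individual coordinates do not descend to well-defined functions on either quotient, and the coordinates $\dist(1,j)$ have no counterpart at all on the Bergman side, so ``killing the same coordinates'' is not yet a meaningful statement; the question mark in your target dimension flags precisely this unresolved identification. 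What must actually be proved is that, under the isomorphism $\mk{n}\cong B'(K_{n-1})$, the two coordinate projections correspond---and that is the content of the lemma, so as written your argument is circular.

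This missing verification is exactly what the paper's proof supplies. It cites \cite{KM07}, Lemmas 2.3 and 2.4, for the fact that the rays $v_{\{i,j\}}$ with $1\notin\{i,j\}$ and $\{i,j\}\neq\{2,3\}$ form a basis of $\R^{\binom{n}{2}}/\Phi(\R^n)$; it observes that $v_{\{i,j\}}$ equals $-2e_{ij}$ modulo the lineality space (so forgetting the light-pair Pl\"ucker coordinates kills exactly these basis vectors), while under the identification with $B'(K_{n-1})$ the same ray is the image of $-e_{ij}$; and it uses \cite{KM07}, Lemma 2.6, namely $v_I=\sum_{S\subset I,\,\abs{S}=2}v_S$, to conclude that the tropicalized projection contracts precisely the rays $v_I$ with all marks in $I$ light---matching the description of $\pr_w$ in Remark~\ref{rem-prw}. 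Once you add this dictionary (the basis, the computation $v_{\{i,j\}}\equiv -2e_{ij}$ modulo $\Phi(\R^n)$, and the decomposition of general rays), your proof closes up; without it, the key step is assumed rather than proved.
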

\begin{proof}
By \cite{KM07}, Lemma 2.3 and 2.4, the vectors $v_I$ where $I$ is a two-element subset not containing $1$ and not equal to $\{2,3\}$ form a basis of $\R^{\binom{n}{2}}/\Phi(\R^n) $.
A ray $v_I$ can be expressed in terms of the basis vectors using \cite{KM07}, Lemma 2.6, which tells us that $v_I$ equals the sum of all $v_S$ where $S\subset I$ is a two-element subset (we assume without restriction that $1\notin I$), and the fact that $-v_{\{2,3\}}$ equals the sum of our basis vectors above.
The tropicalization of the map $\Pr_w$ contracts the vectors $v_{\{i,j\}}$ (which equal $-2e_{ij}$ modulo the lineality space, being the images of $-e_{ij}$ in $B'(K_{n-1})$) for all $i,j$ such that $w_i=w_j=\epsilon$, and with these, it also contracts all rays of the form $v_I$ with $w_i=\epsilon$ for all $i\in I$, since we can express the latter in terms of the $v_{\{i,j\}}$ by the above. Thus, it equals the map $\pr_w$.
\end{proof}

\begin{lemma}\label{lem-embed}
The open part $M_{0,w}$ can be embedded into the torus $\Pr_w( T^{\binom{n}{2}}/T^n)$ using the Pl\"ucker coordinates.
\end{lemma}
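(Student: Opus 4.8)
The plan is to realize the desired embedding as the composite of the Gibney--Maclagan--Tevelev Pl\"ucker embedding of $M_{0,n}$ with the torus projection $\Pr_w$, extended across the locus where light markings collide. Concretely, a point of $M_{0,w}$ is a configuration of $n$ points $x_1,\dots,x_n\in\P^1$ modulo $\mathrm{PGL}_2$, in which the two heavy markings (say $1$ and $2$) are distinct from all others, while light markings may collide among themselves. The Pl\"ucker coordinate $p_{ij}$ is, up to the torus action, the expression $x_i^{(0)}x_j^{(1)}-x_i^{(1)}x_j^{(0)}$, which vanishes precisely when $x_i=x_j$. First I would check that the kept coordinates (those surviving $\Pr_w$, i.e.\ the pairs $\{i,j\}$ that are not both light) define a morphism $M_{0,w}\to \Pr_w(T^{\binom{n}{2}}/T^n)$, and then verify that this morphism is injective and an immersion.

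For the first point, observe that in the heavy/light case every index is heavy or light, so a kept pair $\{i,j\}$ always contains a heavy index, say $i$. Since $w_i+w_j>1$ for every $j\ne i$, the heavy marking $i$ cannot coincide with any other marking on a $w$-stable curve; hence $p_{ij}\ne 0$ on all of $M_{0,w}$. Thus each kept Pl\"ucker coordinate is an invertible function on $M_{0,w}$, and together they land in the torus $\Pr_w(T^{\binom{n}{2}}/T^n)$. In particular the composite $M_{0,n}\hookrightarrow T^{\binom{n}{2}}/T^n\xrightarrow{\Pr_w}\Pr_w(T^{\binom{n}{2}}/T^n)$ extends over the collision locus $M_{0,w}\setminus M_{0,n}$, since only the dropped (light--light) coordinates degenerate there.

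For injectivity, I would recover the configuration from the kept coordinates. Normalizing the two heavy markings to $x_1=0$ and $x_2=\infty$ leaves only the residual scaling $x\mapsto\lambda x$ of $\mathrm{PGL}_2$. For any $i,j\notin\{1,2\}$ the cross-ratio
\[
\frac{p_{1i}\,p_{2j}}{p_{1j}\,p_{2i}}=\frac{x_i}{x_j}
\]
is a monomial in kept coordinates that is invariant under the $T^n$-action, hence a well-defined function on $\Pr_w(T^{\binom{n}{2}}/T^n)$. These ratios determine all $x_i$ up to the residual scaling, i.e.\ determine the point of $M_{0,w}$; crucially, the value $x_i/x_j=1$ detects exactly when two light markings collide, so no information is lost by discarding the light--light coordinates. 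This shows the map is injective, and since the functions $x_i/x_j$ furnish local coordinates on $M_{0,w}$, it is an immersion, giving the asserted embedding.

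The main obstacle is precisely the injectivity across the collision locus: one must be sure that dropping all light--light Pl\"ucker coordinates does not destroy the ability to reconstruct, and to separate, configurations in which light markings coincide. This is exactly where the hypothesis of at least two heavy markings is essential, as it is what allows every light position to be pinned down by cross-ratios against the heavy markings $1$ and $2$, all of which survive $\Pr_w$. This also dovetails with Lemma~\ref{lem-prw}, which identifies the tropicalization of $\Pr_w$ with $\pr_w$.
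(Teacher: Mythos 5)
Your proof is correct and follows essentially the same route as the paper: both identify $M_{0,w}$ with configurations of points on $\P^1$ in which only light--light collisions can occur, observe that the Pl\"ucker minors which can vanish are exactly those dropped by $\Pr_w$, and conclude that the surviving minors map $M_{0,w}$ into the torus $\Pr_w(T^{\binom{n}{2}}/T^n)$. The only difference is one of detail: the paper leaves the injectivity/immersion step implicit, while you verify it explicitly by reconstructing the configuration from the invariant cross-ratios $p_{1i}p_{2j}/(p_{1j}p_{2i})=x_i/x_j$ against the two heavy markings, which is a natural elaboration rather than a different argument.
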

\begin{proof}
 Let us compare the open part $M_{0,w}$ to $M_{0,n}$: now points which are both light are allowed to collide. In the $2\times n$ matrix describing a collection of $n$ points (resp.\ a two-plane in $G(2,n)$) this means that two columns can now coincide (up to nonzero multiple), leading to a zero minor. However, these are exactly the minors we project away with $\Pr_w$, so using the remaining Pl\"ucker coordinates, we embed $M_{0,w}$ into the torus $\Pr_w( T^{\binom{n}{2}}/T^n)$.
\end{proof}

\begin{lemma}\label{lem-geomtrop}
 The geometric tropicalization of $\overline{M}_{0,w}$ using the embedding in Lemma \ref{lem-embed} is identified with $\pr_w(\mk{n})$.
\end{lemma}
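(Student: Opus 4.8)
The plan is to apply the geometric tropicalization recipe of Hacking--Keel--Tevelev, as formulated in \cite{Cue11, MS09}, to the simple normal crossing pair $(\overline M_{0,w}, \partial)$, where $\partial := \overline M_{0,w}\setminus M_{0,w}$, and then to identify the resulting fan with $\pr_w(\mk{n})$. By the preceding proposition $\partial$ is SNC, and by Remark \ref{rem: m0w-boundary} its irreducible components are exactly the codimension-one strata: the loci $D_I$ of one-nodal curves, indexed by splits $[n] = I\amalg I^c$ for which both sides are $w$-stable. Normalizing so that the heavy index $1$ lies in $I^c$, stability of the $I^c$-side is automatic (it contains a heavy point), while the $I$-side is stable exactly when $\sum_{i\in I}w_i > 1$. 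These are precisely the splits whose rays $v_I$ survive the projection, by Remark \ref{rem-prw}.

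First I would compute, for each boundary divisor $D_I$, the associated cocharacter $[D_I]$, whose pairing with the character of a retained Pl\"ucker coordinate $p_{ab}$ is the order of vanishing $\operatorname{ord}_{D_I}(p_{ab}\circ\iota)$. Using the node-smoothing coordinates of Remark \ref{rem: m0w-boundary}, this order is $1$ when the split $I$ separates $a$ and $b$ and $0$ otherwise; this is the identical local computation to the $\overline M_{0,n}$ case, where it is classical that $D_I\mapsto v_I$ generates the tropicalization (\cite{GM07, Tev07}). Since the embedding of Lemma \ref{lem-embed} uses exactly the retained coordinates, i.e. it is the composite of the Pl\"ucker embedding with $\Pr_w$, we obtain that $[D_I]$ is the $\Pr_w$-image of the $\overline M_{0,n}$ vector, namely $\pr_w(v_I)$ by Lemma \ref{lem-prw}. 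Thus every boundary divisor maps to the corresponding ray of $\pr_w(\mk{n})$.

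Next I would match the cone structure. A collection $D_{I_1},\dots,D_{I_k}$ of boundary divisors meets in $\overline M_{0,w}$ precisely when the splits form a compatible (laminar) family realizing a $w$-stable dual graph with $k$ nodes; the recipe then assigns the cone $\operatorname{cone}(\pr_w(v_{I_1}),\dots,\pr_w(v_{I_k}))$. These are exactly the combinatorial types of $w$-stable tropical curves, which by Theorem \ref{moduli_cor_mainresult} index the cones of the nested-set subdivision of $B'(G(w)) = \pr_w(\mk{n})$ with respect to $\curly{G}_{G(w)}$. Hence the dual intersection complex of $(\overline M_{0,w},\partial)$ maps onto this subdivision, identifying the geometric tropicalization with $\pr_w(\mk{n})$ as fans. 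To confirm that the recipe recovers the full tropicalization rather than a proper subfan, I would note that $M_{0,n}\subseteq M_{0,w}$ is a dense open whose embedding is the composite of the usual Pl\"ucker embedding with $\Pr_w$; since tropicalization depends only on the closure in the torus and commutes with the monomial map $\Pr_w$ (Lemma \ref{lem-prw}), we get $\trop(M_{0,w}) = \pr_w(\trop(M_{0,n})) = \pr_w(\mk{n})$ on the nose.

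The main obstacle is the vanishing-order computation for $[D_I]$: one must verify that deleting the Pl\"ucker coordinates indexed by light-light pairs, which is exactly what distinguishes the $\overline M_{0,w}$ embedding from the $\overline M_{0,n}$ one, does not alter the orders of vanishing along the surviving boundary divisors, so that $[D_I]$ is genuinely the $\pr_w$-projection of $v_I$. Concretely, this uses that the reduction morphism $\overline M_{0,n}\to\overline M_{0,w}$ is an isomorphism near a generic point of each retained $D_I$ (it contracts only the all-light divisors), so the local smoothing picture, and hence the orders, agree. This is where the heavy/light hypothesis and Lemma \ref{lem-prw} carry the essential weight.
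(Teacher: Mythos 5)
Your route is, for the most part, the paper's own: identify the boundary divisors with the $w$-stable splits, compute their divisorial valuations on the retained Pl\"ucker coordinates by reducing to the $\overline{M}_{0,n}$ computation via Lemma \ref{lem-prw}, and match the dual intersection complex with the cone structure of $\pr_w(\mk{n})$ using Theorem \ref{moduli_cor_mainresult} (the paper invokes Theorem \ref{thm-heavyandlight} for this step, but the content is the same). Your extra observation that the reduction morphism $\overline{M}_{0,n}\to\overline{M}_{0,w}$ is an isomorphism near the generic point of each retained divisor is a legitimate way to justify the order-of-vanishing computation.

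There is, however, a genuine gap: geometric tropicalization in the sense used here (Cueto's Theorem 2.5) produces a \emph{weighted} balanced fan, and the identification asserted in the lemma is with $\pr_w(\mk{n})=B'(G(w))$ carrying weight $1$ on every top-dimensional cone. Your argument identifies supports, rays, and cones, but never verifies the weights; the paper devotes its final paragraph precisely to this. Your closing claim that $\trop(M_{0,w})=\pr_w(\trop(M_{0,n}))$ ``on the nose'' hides exactly this issue: pushing a tropicalization forward along a monomial map changes multiplicities by lattice indices (by the Sturmfels--Tevelev formula, the weight of a top-dimensional image cone $\sigma'$ is $\sum_{\sigma}[\Lambda_{\sigma'}:\pr_w(\Lambda_\sigma)]$), so equality of supports does not yield equality of weighted fans --- a torus isogeny already shows the indices can be nontrivial. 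One must check the index is $1$ on each surviving cone, e.g.\ as the paper does: the rays of a top-dimensional cone of $\mk{n}$ span its lattice $\Lambda_\sigma$, and the corresponding boundary divisors of $\overline{M}_{0,n}$ meet transversally in a point of multiplicity one, so Cueto's weight function is identically $1$. Note also that irreducibility of the Bergman fan would only give you that the weight is a \emph{constant} integer multiple of $1$; pinning that constant to $1$ still requires this computation.
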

\begin{proof}
It is straightforward to see that the cone over the dual intersection complex of $\overline{M}_{0,w}$ is canonically identified (as a cone complex) with $\mk{w}$. We know already by Theorem \ref{thm-heavyandlight} that the latter is the cone complex underlying the fan $\pr_w(\mk{n})$.
Thus, it remains only to check that the divisorial valuations of the boundary divisors for the remaining Pl\"ucker coordinate functions yield the rays of this fan. This is an easy consequence of Lemma \ref{lem-prw}.

Finally, we must check that the weight function for the geometric tropicalization, as given in \cite[Theorem 2.5]{Cue11}, is identically $1$, thus matching the weight on the Bergman fan $\pr_w(\mk{n})=B'(G(w))$. This again follows from the analogous fact for $\mk{n}$: the rays of a top-dimensional cone $\sigma$ span the lattice $\Lambda_\sigma$ of this cone, and the corresponding boundary divisors of $\overline{M}_{0,n}$ intersect in a point of multiplicity one.
\end{proof}

\begin{theorem}\label{thm-tropicalizing}
  Let $w$ be heavy/light. Consider the embedding \[
  M_{0,w}\hookrightarrow T_w = \Pr_w( T^{\binom{n}{2}}/T^n)
  \] 
 described in Lemma~\ref{lem-embed}. The closure of $M_{0,w}$ the compactification of $T_w$ defined by the fan $\mk{w}$ is isomorphic to $\overline{M}_{0,w}$. The tropicalization of $M_{0,w}$ with respect to this embedding is $\pr_w(\mk{n})=\mk{w}$.
\end{theorem}
\begin{proof}
We wish to show that the map $\overline M_{0,w} \to X(M_{0,w}^{\trop})$ is an embedding. According to~\cite[Lemma 2.6 (4), Theorem 2.10]{HKT09}, this occurs when the following two conditions hold. Let $S$ be a stratum, let $M_{S}$ be $\mathcal O^*(S)/k^*$ and $M^S_{M_{0,w}}$ be the sublattice of $\mathcal O^*(M_{0,w})/k^*$ generated by units having zero valuation on $S$.
\begin{enumerate}
\item For each boundary divisor $D$ containing $S$, there is a unit $u\in \mathcal O^*(M_{0,w})$ with valuation $1$ on $D$ and valuation $0$ on other boundary divisors containing $S$.
\item  $S$ is very affine and the restriction map $M^S_{M_{0,w}}\to M_S$ is surjective.
\end{enumerate}

For (1) observe that for each boundary divisor $D$, we can choose an appropriate forgetful morphism to $M_{0,4}$, informally a \textit{cross ratio} map, as is done in~\cite[Section 5]{Tev07}. It is straightforward to check that these functions have valuation $1$ on $D$ and valuation $0$ on any other boundary divisor. 

We see that all the strata $S$ are very affine in this case. Recall that the divisors containing a given stratum $S$ are precisely those rays of $M_{0,w}^{\trop}$ which are contained in the cone $\sigma_S$ corresponding to $S$. It follows immediately now from the discussion of the boundary stratification of $M_{0,w}$ in Remark~\ref{rem: m0w-boundary} that the restriction map is surjective.

\end{proof}

\begin{remark}
 If we drop the condition of having only heavy and light points, many of the statements discussed here are still true. The geometric tropicalization of $\overline{M}_{0,w}$ using the embedding in Lemma~\ref{lem-embed} still equals $\pr_w(\mk{n})$, however we know already that not all cones are mapped injectively in this case. As a result, the underlying abstract cone complex of $\pr_w(\mk{n})$ is not $\mk{w}$. On the algebraic side, this is reflected by the fact that we still have a map from $\overline{M}_{0,w}$ to the toric variety defined by $\pr_w(\mk{n})$, but it does not map all boundary strata injectively.
\end{remark}

\begin{remark}
Allowing edge lengths to become infinite, analogously to~\cite{ACP12} we obtain an extended cone complex $\overline M^{\trop}_{0,w}$. The arguments above also show that for $w$ heavy/light, the extended tropicalization of $\overline M_{0,w}$ inside the toric variety $X(M_{0,w}^{trop})$ can be identified with $\overline M^{\trop}_{0,w}$. 
\end{remark}

\subsection{Extended example: Losev--Manin spaces}\label{sec: LosevManin}

Let $w$ be the weight vector $(1,1,\epsilon,\cdots,\epsilon)$ for $\epsilon$ light. The space $\overline{M}_{0,w}$ is called the  \textit{Losev--Manin} moduli space and parametrizes chains of projective lines with $n$ marked points, where $n = \ell(w)$. These spaces were introduced and studied in \cite{losevmanin} and play a role, for instance, in the theory of relative stable maps, as a target for branch morphisms.

The Losev--Manin moduli spaces are toric varieties themselves, and as a result the situation simplifies considerably in this case. In fact, there is some beautiful combinatorics that arises in this situation. See~\cite{Bat11} for a proof of the following proposition. See also~\cite{Kap93, losevmanin}.

\begin{prop}
Let $X_n$ be the toric variety obtained by blowing up $\mathbb P^{n-3}$ at all torus invariant subvarieties up to codimension $2$ in order of decreasing codimension. The Losev--Manin moduli space $\overline M_{0,w}$ is isomorphic to $X_n$. 
\end{prop}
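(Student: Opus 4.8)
The plan is to combine the fan description of $\mk{w}$ from Section~\ref{section_moduli} with Theorem~\ref{thm-tropicalizing}, and then to recognize the resulting fan as the braid (permutohedral) fan, whose associated toric variety is classically the iterated blow-up $X_n$. First I would observe that for $w=(1,1,\epsilon^{n-2})$ the locus of smooth curves $M_{0,w}$ is itself a torus: a smooth $w$-stable curve is a single $\mathbb P^1$ carrying the two heavy poles $0,\infty$ together with $n-2$ light points lying in $\mathbb P^1\wo\{0,\infty\}$ (which may collide with each other but not with the poles), modulo the automorphisms fixing $0,\infty$. This gives $M_{0,w}\cong T^{n-3}$. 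By Theorem~\ref{thm-tropicalizing}, $\overline M_{0,w}$ is the closure of $M_{0,w}$ inside $X(\mk{w})$; since $M_{0,w}$ is precisely the dense torus of this toric variety, we obtain $\overline M_{0,w}\cong X(\mk{w})$ as toric varieties, reducing the problem to identifying the fan $\mk{w}=\pr_w(\mk{n})=B'(G(w))$.

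Next I would compute $G(w)$ explicitly. The total weight graph connects $i,j$ exactly when $w_i+w_j>1$, so with two heavy and $n-2$ light entries each heavy vertex is adjacent to everything while the light vertices are pairwise nonadjacent. Deleting one heavy vertex, the reduced graph $G(w)$ is the star $K_{1,n-2}$ centered at the remaining heavy vertex. Being a tree, its graphic matroid is free, so the lattice of flats is the full Boolean lattice on the $n-2$ edges and $B'(G(w))$ has support all of $\R^{n-3}$. Crucially, every nonempty set of edges of a star spans a connected subgraph, so the building set $\curly{G}_{G(w)}$ of $1$-connected flats is the \emph{entire} poset of nonempty flats. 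For such a maximal building set the nested sets are exactly the chains, so by Theorem~\ref{moduli_cor_mainresult} the polyhedral structure on $B'(G(w))$ is the order complex of the Boolean lattice, that is, the barycentric subdivision of the fan of $\mathbb P^{n-3}$ --- which is precisely the permutohedral fan $\Sigma_{\mathrm{perm}}$.

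It then remains to match $X(\Sigma_{\mathrm{perm}})$ with the iterated blow-up $X_n$. Here I would use the standard dictionary between toric blow-ups and star subdivisions: the torus-invariant subvariety of $\mathbb P^{n-3}$ of codimension $c$ corresponds to a $c$-dimensional cone $\sigma$ in the (simplicial) fan of $\mathbb P^{n-3}$, and its blow-up is the star subdivision of $\sigma$ at its barycenter. Blowing up in order of decreasing codimension, from codimension $n-3$ (the torus-fixed points, i.e.\ the maximal cones) down to codimension $2$, therefore star-subdivides every cone of dimension $\geq 2$ in order of decreasing dimension. Since the barycenter of a $1$-dimensional cone is already a ray of the fan, this adds a new ray at the barycenter of each cone of dimension $\geq 2$ and leaves the rays (equivalently, the codimension-$1$ divisors) untouched --- exactly the inductive stellar construction of the barycentric subdivision. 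Hence $X_n=X(\Sigma_{\mathrm{perm}})=\overline M_{0,w}$.

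The main obstacle is the verification in the last paragraph that performing the star subdivisions in the prescribed decreasing-codimension order genuinely reproduces the barycentric subdivision (rather than some order-dependent refinement), and that at each stage the next blow-up center remains a smooth torus-invariant subvariety of the variety produced so far. The key points to check carefully are that within a fixed codimension the relevant cones are subdivided independently, so that the order within that codimension is immaterial, while the global decreasing-codimension order guarantees compatibility of the successive stellar subdivisions; this is the content established in~\cite{Bat11} (see also~\cite{Kap93,losevmanin}), which I would cite for the detailed combinatorial argument.
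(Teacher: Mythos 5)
Your proof is correct, but it is worth pointing out that the paper gives no argument for this proposition at all: it simply cites Batyrev--Blume~\cite{Bat11} (see also~\cite{Kap93,losevmanin}), and only afterwards remarks that the toric nature of $\overline M_{0,w}$ can \emph{also} be derived from Theorem~\ref{thm-tropicalizing} because $\mk{w}$ is a complete fan. Your argument carries that remark to completion, and its individual steps match what the paper records piecemeal: the paper likewise observes that completeness of the fan makes the embedding $\overline M_{0,w}\hookrightarrow X(\mk{w})$ surjective, likewise computes that $G(w)$ is a star graph with matroid $U_{t,t}$ so that $\mk{w}\cong\R^{n-3}$ carries the chains-of-flats subdivision (your observation that the $1$-connected building set is the whole lattice of flats, forcing nested sets to be chains, is the same computation), and likewise identifies $\Sigma(X_n)$ with the normal fan of the permutahedron. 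What you add is the explicit identification $M_{0,w}\cong T^{n-3}$ via the Pl\"ucker embedding together with the closure argument, and the star-subdivision dictionary linking the permutohedral fan to the iterated blow-up. The trade-off is clear: the paper's citation disposes of the entire moduli-theoretic statement in one stroke, while your proof needs from the literature only the purely toric/combinatorial fact that blowing up $\P^{n-3}$ in decreasing order of codimension realizes the barycentric (permutohedral) subdivision --- the moduli content is borne entirely by Theorem~\ref{thm-tropicalizing} and Theorem~\ref{moduli_cor_mainresult}, so your route is genuinely more self-contained relative to this paper. One small remark: you do not actually need $M_{0,w}$ to be \emph{exactly} the dense torus (though it is, consistent with the paper's observation that the locus of non-smooth curves coincides with the toric boundary); density of the image, which already follows from the dimension count $\dim M_{0,w}=n-3=\dim \Pr_w(T^{\binom{n}{2}}/T^n)$, suffices to force the closure in $X(\mk{w})$ to be all of $X(\mk{w})$.
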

The associated fan $\Sigma(X_n)$ is the normal fan of the permutahedron. 

\begin{remark}
In the case of the Losev--Manin moduli space, the \textit{modular boundary}, i.e. the complement of $M_{0,n}$, is not normal crossing. However, the locus of non-smooth curves coincides with the toric boundary (the complement of the big torus), which is simple normal crossing. 
\end{remark}

Since the tropical moduli space $\pr_w(\mk{n})=\mk{w}$ is a complete fan, the embedding of $\overline{M}_{0,w}$ into the corresponding toric variety is surjective. The fact that Losev--Manin spaces are toric can thus also be derived from Theorem \ref{thm-tropicalizing}.

Let us discuss some aspects of this fan more closely. We can use Theorem \ref{moduli_cor_mainresult} and \ref{thm-heavyandlight} and study $B'(G(w))=\pr_w(\mk{n})=\mk{w}$. The graph $G(w)$ is a star graph, i.e.\ it consists of $t$ edges meeting in a single vertex. The matroid of this graph is $U_{t,t}$, so we see that 
 $\mk{w} \cong \R^{t-1}$. Furthermore, the subdivision of $\R^{t-1}$ is the nested set subdivision with respect to the 1-connected flats of $G(w)$. However, all flats of $G(w)$ are 1-connected, so the subdivision is actually the chains-of-flats-subdivision of $U_{t,t}$.

 We can also describe the tropical curves we parametrize more concretely, as follows. Any $w$-stable rational curve is a so-called \textit{caterpillar tree} (Figure \ref{geom_fig_losevmanin}): it consists of a single chain of edges with the \textit{heavy} leaves at either end and the remaining leaves distributed at will along the chain of edges. 
 
 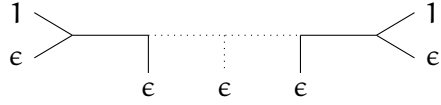
\begin{figure}[ht]
  \centering
  \begin{tikzpicture}
   \draw (0,0) -- (1,0);
   \draw[dotted] (1,0) -- (3,0);
   \draw (3,0) -- (4,0);
   \draw (0,0) -- (-0.5,0.3) node[left]{$1$};
   \draw (0,0) -- (-0.5,-0.3) node[left]{$\epsilon$};
   \draw (1,0) -- (1,-0.5) node[below]{$\epsilon$};
   \draw[dotted] (2,0) -- (2,-0.5) node[below]{$\epsilon$};
   \draw (3,0) -- (3,-0.5) node[below]{$\epsilon$};
   \draw (4,0) -- (4.5,0.3) node[right]{$1$};
   \draw (4,0) -- (4.5,-0.3) node[right]{$\epsilon$};
  \end{tikzpicture}
  \caption{A dual graph of a curve parametrized by a point of Losev--Manin space.}\label{geom_fig_losevmanin}
 \end{figure}

We identify each such curve through its vector of leaf distances 
$$(\dist(l_1,l_3),\dots,\dist(l_1,l_n)) \in \R^t.$$
In turn, each element of $\R^t$ can be considered such a distance vector, if we set its smallest entry to 0. So again, we obtain as parameter space $\R^t / (1,\dots,1) \cong \R^{t-1}$. A canonical subdivision is dictated by the combinatorial types, more precisely, we obtain a top-dimensional cone for each of the $(n-2)!$ orderings on the leaves $l_3,\dots,l_n$. One can easily check that this is the same as the chains-of-flats subdivision of $U_{t,t}$.

\subsection{Spaces of weighted stable curves and Berkovich skeletons}\label{sec: skeleta}
We continue to work over trivially valued $\mathbb C$. Let $X$ be a proper normal variety. Let $U\hookrightarrow X$ be given by the open complement of a normal crossing divisor $D$. The pair $(X,D)$ carries the structure of a toroidal embedding, in the sense of~\cite{KKMSD}. Associated to any toroidal embedding is an extended cone complex $\overline\Sigma(X)$. Thuillier~\cite{Thu07} realizes this cone complex as a skeleton of the Berkovich analytic space $X^{an}$. Builiding on this, Abramovich, Caporaso, and Payne~\cite{ACP12} identify the tropical moduli spaces $\overline M_{g,n}^{\trop}$ with the skeleton of the Berkovich analytification of $\overline{\mathcal{M}}_{g,n}$. They use this formalism to study a functorial tropicalization for this moduli space.

We now extend this to the present situation. To state the results most cleanly, it is convenient to work with the extended cone complex $\overline M^{\trop}_{0,w}$, obtained by allowing edge lengths to become infinite, identical to the $\overline{\mathcal{M}}^{\trop}_{g,n}$ case. 

We define a ``set theoretic'' tropicalization map 
\[
\trop: \overline M_{0,w}^{an}\to \overline M_{0,w}^{\trop},
\]
as follows. Let $p\in  \overline M^{an}_{0,w}$. Such a point $p$ can be represented by a stable curve $[C]$ over a valued extension $K$ of $\mathbb C$. Since $\overline M_{0,w}$ is proper, this extends to curve over the valuation ring $R$ of $K$. Define $\trop(p)$ to be the dual graph $\Gamma_C$ of the special fibre of $[C]$. The edges of $\Gamma_C$ correspond to nodes in the special fibre. Such a node has a defining equation 
\[
xy = f,
\]
where $f\in R$. We assign the corresponding edge length equal to $val(f)$. Note that if nodes appear in the generic fibre, then the defining equation is locally $xy = 0$, and the corresponding edge has length $\infty$.

Let $w$ be heavy/light as before. Recall from the previous section that the complement of the locus of smooth curves in $\overline M_{0,w}$ is a divisor with simple normal crossings. We have the following result.

\begin{theorem}
The (extended) cone complex $\overline M_{0,w}^{\trop}$ is identified with the skeleton of $\overline M_{0,w}^{an}$. Furthermore, there exists a section of the tropicalization map $\trop: \overline M^{an}_{0,w}\to  \overline M^{\trop}_{0,w}$,
\[
s: \overline M^{\trop}_{0,w}\to  \overline M^{an}_{0,w},
\]
which realizes the tropicalization as a skeleton of the Berkovich space. Furthermore, there is a canonical strong deformation retract from $\overline M^{an}_{0,w}$\ onto  $\overline M^{\trop}_{0,w}$. 
\end{theorem}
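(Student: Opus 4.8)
The plan is to reduce the statement to the unweighted case already established by Abramovich, Caporaso, and Payne \cite{ACP12}, leveraging the fact that the whole situation is controlled by the toroidal structure coming from the normal crossing divisor of non-smooth curves. First I would invoke the proposition above stating that the complement of the smooth locus $M_{0,w}$ inside $\overline M_{0,w}$ is a simple normal crossing divisor $D$. This makes the pair $(\overline M_{0,w}, D)$ a toroidal embedding in the sense of \cite{KKMSD}, and the general theory of Thuillier \cite{Thu07} then associates to it an extended cone complex $\overline\Sigma(\overline M_{0,w})$ together with a canonical identification of this complex with a closed subset of the analytification $\overline M_{0,w}^{an}$, realized as the image of a deformation retract. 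The strategy is thus to show that this abstract toroidal cone complex $\overline\Sigma(\overline M_{0,w})$ is precisely the (extended) tropical moduli space $\overline M^{\trop}_{0,w}$, after which all three assertions of the theorem follow formally from Thuillier's construction.

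The key identification step is to match strata with cones. By Remark~\ref{rem: m0w-boundary}, the stratification of $\overline M_{0,w}$ by the non-smooth locus agrees with the stratification by dual graph: a codimension $k$ stratum is the locus of curves with a fixed dual graph having $k$ nodes, and near such a stratum the deformation parameters of the nodes give monomial coordinates. This is exactly the data that the toroidal cone complex $\overline\Sigma$ records: to each stratum one assigns the cone of monomial valuations, whose rays correspond to the boundary divisors containing the stratum, i.e.\ to the individual nodes. On the tropical side, a cone of $\overline M^{\trop}_{0,w}$ corresponding to a given combinatorial type is spanned by the edge-length coordinates of the bounded edges, which are in bijection with the nodes of the generic curve of that type. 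I would therefore show that the bijection between combinatorial types of $w$-stable tropical curves and strata of $\overline M_{0,w}$ induces an isomorphism of cone complexes, with the gluing maps along faces matching graph contractions on both sides. This is essentially the statement proved in Lemma~\ref{lem-geomtrop}, that the dual intersection complex of $\overline M_{0,w}$ is canonically identified with $\mk{w}$; the present argument extends it across the extended boundary where edge lengths become infinite, exactly as in the $\overline{\mathcal M}_{g,n}$ case of \cite{ACP12}.

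Once the cone complexes are identified, the section $s$ and the retraction are produced by transport of structure from Thuillier's framework. Thuillier's theorem provides a canonical section of the tropicalization map onto the skeleton and a strong deformation retract of $\overline M_{0,w}^{an}$ onto it; under the identification $\overline\Sigma(\overline M_{0,w}) \cong \overline M^{\trop}_{0,w}$ these become the desired $s$ and deformation retract. I would verify that the set-theoretic map $\trop$ defined above via the dual graph and the valuations $\val(f)$ of the node-smoothing parameters $xy = f$ agrees with Thuillier's retraction, which amounts to checking that the monomial coordinates of the toroidal structure are precisely the node-deformation parameters; this is the content of the local-analytic description in Remark~\ref{rem: m0w-boundary} and is the compatibility observed in \cite{ACP12}.

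The main obstacle I anticipate is not the formal transport of \cite{ACP12} and \cite{Thu07}, but rather the verification that the Hassett compactification $\overline M_{0,w}$ genuinely carries a toroidal structure adapted to the dual-graph stratification, given that the full modular boundary (the complement of $M_{0,n}$, including loci where light points collide) is \emph{not} normal crossing. The delicate point is that we must use the smaller divisor of non-smooth curves, not the modular boundary, and argue that its strata still biject correctly with tropical combinatorial types and that no collapsing occurs in the heavy/light regime. This is exactly where the heavy/light hypothesis is essential: by Theorem~\ref{thm-heavyandlight} it guarantees that $\pr_w$ contracts only the cones it should, so that $\mk{w}$ embeds as a genuine fan and the stratification is free of the pathologies (such as strata adjacent to only one top-dimensional cell) that obstruct the identification in the non-heavy/light case.
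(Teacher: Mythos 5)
Your proposal is correct and takes essentially the same approach as the paper: both endow $\overline M_{0,w}$ with the toroidal structure coming from the simple normal crossing divisor of non-smooth curves, identify the associated (extended) cone complex with $\overline M^{\trop}_{0,w}$ by matching strata to cones through dual graphs and node-deformation parameters, and then obtain the section and strong deformation retract from the machinery of Thuillier~\cite{Thu07} and Abramovich--Caporaso--Payne~\cite{ACP12}. The only caveat is that your closing paragraph overstates the role of the heavy/light hypothesis: the skeleton identification is a statement about abstract cone complexes and does not require $\mk{w}$ to embed as a balanced fan, which is why the paper can prove an analogous skeleton result for general $w$ with a suitably chosen boundary divisor.
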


\begin{proof}
The proof is essentially the same as the corresponding statement for $\overline M_{g,n}$, so we merely provide a sketch. Consider a $0$-stratum of $\overline M_{0,w}$, with respect to the previously described toroidal structure. Let $[C]$ be the $w$-stable curve parametrized by this stratum. The deformation parameters of the nodes of $[C]$ form a system of local coordinates near $[C]\in \overline M_{0,w}$. This furnishes a formal neighborhood of $[C]$ isomorphic to a formal $\mathbb A^N$, where $N$ is the number of nodes of $[C]$. The valuations of these deformation parameters yield coordinates on the top dimensional cone of $M_{0,w}^{\trop}$ corresponding to this zero stratum. However, these are naturally identified with coordinates on the top dimensional cone of $M_{0,w}^{\trop}$ parametrizing tropical $w$-stable curves with underlying combinatorial type given by the dual graph $\Gamma_C$ of $[C]$. For higher dimensional strata, the deformation parameters form a subset of the coordinates of a formal local affine space, which map to lower dimensional cones in the skeleton. The fact that the set theoretic tropicalization map agrees with Thuillier's ``projection to the skeleton'' map is standard and follows from analogous arguments in~\cite[Section~6]{ACP12}.
\end{proof}

Suppose that $w$ is not necessary heavy/light. Let $D$ be the boundary divisor of $\overline M_{0,w}$ given by the union of divisors corresponding to the rays of $\textnormal{pr}_w(M_{0,n}^{\trop})$. The analysis carried out in Section~\ref{section_moduli} characterizes the boundary intersections of components of $D$. More precisely, two irreducible boundary divisors $D_i$ and $D_j$ intersect precisely when the corresponding rays of $\textnormal{pr}_w(M^{\trop}_{0,n})$ span a $2$-dimensional cone $\sigma$. The situation generalizes in the natural way for manifold intersections. Furthermore, the combinatorial type of graphs parametrized by $\sigma$ are dual to the universal curve over $D_i\cap D_j$. Consequently, the boundary divisor $D$ is simple normal crossing, and identical arguments as above yield the following. 

\begin{theorem}
The cone complex $\textnormal{pr}_w(\overline M_{0,n})$ is identified with the Thuillier skeleton of $\overline M_{0,w}^{an}$, with the toroidal structure coming from the inclusion of the complement of the divisor $D$ above. 
\end{theorem}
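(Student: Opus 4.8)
The plan is to reduce this statement, which concerns the general (not necessarily heavy/light) weight vector $w$, to the machinery already developed for the heavy/light case and in \cite{ACP12,Thu07}. The key realization is that the only genuinely new ingredient beyond the previous theorem is that the divisor $D$ in question---built from the rays of $\pr_w(\mk{n})$ rather than from the full modular boundary---is simple normal crossing, and that its dual complex coincides with the cone complex $\pr_w(\mk{n})$. Once these two facts are in place, Thuillier's identification of the skeleton of a toroidal embedding with its extended dual cone complex \cite{Thu07}, together with the deformation-retraction arguments of \cite[Section~6]{ACP12}, applies verbatim.

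First I would verify that $(X,D) = (\overline M_{0,w}, D)$ is a toroidal embedding with $D$ simple normal crossing. This rests on the analysis of Section~\ref{section_moduli}: the boundary divisors $D_i$ correspond to the rays $v_{F_i}$ of $\pr_w(\mk{n}) = B'(G(w))$, i.e.\ to the $1$-connected flats of $G(w)$, and by Theorem~\ref{moduli_cor_mainresult} the faces of the nested set subdivision encode exactly the incidence pattern of these divisors. Concretely, $D_i$ and $D_j$ meet if and only if $v_{F_i}, v_{F_j}$ span a cone of the nested set subdivision, which happens precisely when $\{F_i, F_j\}$ is a nested set; higher intersections correspond to larger nested sets. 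Since nested set subdivisions are simplicial (as used in Lemma~\ref{moduli_lemma_wheninjective}), the local picture at each stratum is that of coordinate hyperplanes, giving the snc property. I would confirm the toroidal structure by again using that the deformation parameters of the nodes of a $w$-stable curve parametrized by a $0$-stratum furnish a formal-local system of coordinates, making a formal neighborhood isomorphic to a formal affine space with $D$ cut out by monomials.

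Next I would identify the dual cone complex $\overline\Sigma(\overline M_{0,w}, D)$ with $\pr_w(\overline M_{0,n})$. The cones of $\overline\Sigma$ are indexed by the strata of $(X,D)$, glued along faces according to inclusion of strata, and by the previous paragraph these strata are in incidence-reversing bijection with the cones of the nested set subdivision of $B'(G(w))$. The valuations of the deformation parameters provide integral coordinates on each cone, matching the lattice structure of $\pr_w(\mk{n})$, exactly as in the heavy/light proof but now without requiring injectivity of $\pr_w$ on every cone---here we only need the identification as cone complexes, which is precisely what $\pr_w(\overline M_{0,n})$ records. With $(X,D)$ toroidal and its dual complex identified, Thuillier's theorem \cite{Thu07} gives the skeleton of $\overline M_{0,w}^{an}$ as $\overline\Sigma = \pr_w(\overline M_{0,n})$, and the strong deformation retract and the section of the tropicalization map follow from the functorial constructions of \cite[Section~6]{ACP12}, together with the fact that the set-theoretic $\trop$ map defined above agrees with Thuillier's projection to the skeleton.

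The main obstacle I anticipate is the simple normal crossing verification for $D$ when $w$ is \emph{not} heavy/light. In this regime $\pr_w$ fails to be injective on some top-dimensional cones (Lemma~\ref{moduli_thm_noninjective}), so the naive boundary $\overline M_{0,w}\setminus M_{0,w}$ is genuinely not snc, and the subtlety is that we must instead take $D$ to be assembled only from the divisors dual to rays of $\pr_w(\mk{n})$---a strictly smaller and more carefully chosen divisor. I would need to check that this $D$ is still a reduced snc divisor whose components meet according to the nested set combinatorics, and in particular that the contracted cones (the inherited $w$-unstable types of $\curly{U}_w$) do not introduce spurious non-transverse intersections. The key point, which I would argue from Remark~\ref{rem-prw} and the structure of $\curly{G}_{G(w)}$, is that the relations among the $v_{F_i}$ are exactly the balancing relations of the Bergman fan, so the divisors behave like the boundary of a toric variety locally; once this local toric model is established, the snc property and the compatibility of the universal curve over $D_i\cap D_j$ with the combinatorial type parametrized by $\sigma$ are immediate, and the rest of the argument is identical to the heavy/light and $\overline M_{g,n}$ cases.
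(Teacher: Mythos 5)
Your overall route coincides with the paper's: take $D$ to consist of boundary divisors dual to the rays of $\pr_w(\mk{n})$, use the Section 2 combinatorics to show that intersections of components of $D$ reproduce exactly the nested-set cones of $B'(G(w))$, conclude that $(\overline M_{0,w},D)$ is a toroidal (snc) pair whose dual cone complex is $\pr_w(\mk{n})$, and then repeat the deformation-parameter and Thuillier/ACP arguments from the heavy/light theorem. Up to the level of detail, this is the paper's argument.

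However, one claim in your last paragraph is false, and it shows that the role of $D$ is slightly misunderstood. The full modular boundary $\overline M_{0,w}\setminus M_{0,w}$ \emph{is} simple normal crossing for every genus-zero weight vector: each $\delta_I$ is smooth, and at a curve with $k$ nodes the $k$ node-smoothing parameters are local coordinates in which the divisors through that point are coordinate hyperplanes; non-injectivity of $\pr_w$ does not affect this. What non-injectivity actually causes is that the assignment from boundary divisors to rays of $\pr_w(\mk{n})$ fails to be injective: for $w=(1,1,3/4,3/4,1/4)$ both $\delta_{\{1,2\}}$ and $\delta_{\{3,4\}}$ have divisorial valuation $v_{F_3}$. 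Consequently the dual complex of the \emph{full} boundary is the abstract cone complex $\mk{w}$ (one ray per stable split), not $\pr_w(\mk{n})$, and for that toroidal structure the theorem would be false. So the reason $D$ must be chosen smaller is a dual-complex mismatch, not an snc failure: $D$ must contain exactly one irreducible divisor per ray, namely $\delta_{V(F)}$ for each $1$-connected flat $F$, with redundant divisors such as $\delta_{\{1,2\}}$ above omitted (their generic points then lie in the toroidal interior). This reading is forced: under the alternative reading ``all divisors whose valuation spans a ray,'' your assertion that $D_i\cap D_j\neq\emptyset$ if and only if $\{F_i,F_j\}$ is nested fails, since $\delta_{\{1,2\}}$ and $\delta_{\{3,4\}}$ intersect yet define the same ray. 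Your indexing $D_i\leftrightarrow F_i$ implicitly makes the correct choice (as does the paper's wording), so the rest of your argument goes through, but the snc discussion should be corrected accordingly.
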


We return now to the $w$ heavy/light case. The Hassett spaces admit natural tautological morphisms, known as reduction maps. Given two weight data $w = (w_j)$ and $w' = (w'_j)$ such that $w_i\geq w'_i$ for all $i$, there exists a natural birational morphism
\[
\rho_{w,w'}: \overline M_{0,w}\to \overline M_{0,w'},
\]
obtained by collapsing components of curves that become unstable under the weights $w'$. In particular, there always exists a reduction map
\[
\rho_w:\overline M_{0,n}\to \overline M_{0,w}.
\]

\begin{theorem}
The map $\rho_w$ is compatible with the tropical projection maps $pr_w$. More precisely, in the notation above, $\textnormal{pr}_w = \trop\circ \rho_w$. 
\end{theorem}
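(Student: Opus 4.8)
The equality $\pr_w=\trop\circ\rho_w$ is to be read as the commutativity, for every $p\in\overline M_{0,n}^{an}$, of
\[
\pr_w\big(\trop(p)\big)=\trop\big(\rho_w^{an}(p)\big),
\]
where on the left $\trop\colon\overline M_{0,n}^{an}\to\mk{n}$ and on the right $\trop\colon\overline M_{0,w}^{an}\to\mk{w}$; equivalently, restricting along the section $s$ that realizes $\mk{n}$ as the skeleton of $\overline M_{0,n}^{an}$, the composite $\trop\circ\rho_w^{an}$ agrees with $\pr_w$. The plan is to verify the identity first on the dense open locus by functoriality of tropicalization, and then to extend it to the boundary through a direct comparison of the two metric graphs produced by the set-theoretic $\trop$.

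On the open locus the argument is formal. By Lemma \ref{lem-embed}, $M_{0,w}$ is embedded by the Plücker coordinates surviving $\Pr_w$, and under these embeddings the restriction $\rho_w|_{M_{0,n}}$ is exactly the restriction of the toric projection $\Pr_w$: on the $2\times n$ matrices of Lemma \ref{lem-embed}, collapsing the components that become $w$-unstable amounts precisely to letting the light columns coincide, i.e.\ to discarding the light--light minors that $\Pr_w$ drops. Tropicalization is functorial for such monomial maps, and by Lemma \ref{lem-prw} the tropicalization of $\Pr_w$ is $\pr_w$. Hence, for $p$ in the dense open $M_{0,n}^{an}$,
\[
\trop(\rho_w^{an}(p))=\trop(\Pr_w(\iota(p)))=\pr_w(\trop(\iota(p)))=\pr_w(\trop(p)).
\]

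To treat a boundary point I would use the dual-graph description of $\trop$. Such a $p$ is represented by a stable curve $[C]$ over a valued extension $K/\mathbb C$, with a model over the valuation ring $R$; then $\trop(p)$ is the dual graph $\Gamma_C$, each edge metrized by the valuation of its node-smoothing parameter. The reduction $\rho_w([C])$ is obtained by contracting the $w$-unstable tails of $[C]$, and a tail with leaf split $I$ is contracted precisely when $\sum_{i\in I}w_i\le 1$. By Remark \ref{rem-prw} these are exactly the rays $v_I$ that $\pr_w$ kills, so the two operations delete the same edges; moreover the possible cascade of further contractions after a tail is collapsed is again controlled by the condition $\sum_I w\le 1$ on the merged split, in agreement with the relation $v_I=\sum_{\{i,j\}\subset I}v_{\{i,j\}}$ used in Remark \ref{rem-prw}. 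Finally, away from the contracted tails the reduction morphism is an isomorphism near each surviving node, so it leaves the corresponding equation $xy=f$, hence the valuation $\val(f)$ and the edge length, unchanged; equivalently the surviving Plücker coordinates computing the relevant leaf distances are unaffected. Thus $\trop(\rho_w^{an}(p))$ and $\pr_w(\trop(p))$ are the same metric graph, and the same reasoning applies verbatim to the extended complex, contracted tails merely deleting edges of infinite length that $\pr_w$ already forgets.

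The combinatorial matching of deleted edges is immediate from the earlier analysis; the single point that genuinely requires the geometry, rather than pure combinatorics, is the metric compatibility at the surviving nodes --- that reduction does not alter the smoothing parameters of the non-contracted nodes --- which is exactly where the local structure of $\rho_w$ enters.
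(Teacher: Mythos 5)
Your proof is correct, but it follows a genuinely different route from the paper's. The paper proves the statement in one stroke by invoking functoriality of skeleton formation for toroidal morphisms: it verifies that $\rho_w$ is toroidal by exhibiting formal local charts around $[C]$ and $\rho_w([C])$ given by the deformation parameters of the nodes, the entire content being that the preimage of a node of $\rho_w([C])$ is a single node of $[C]$, and that the deformation parameter $\zeta$ at a node downstairs pulls back to the deformation parameter $\widetilde\zeta$ at the corresponding node upstairs; compatibility of the two projections to the skeletons is then automatic. You instead verify the identity pointwise against the set-theoretic definition of $\trop$: on the dense locus $M_{0,n}^{an}$ by functoriality of coordinatewise valuation under the monomial map $\Pr_w$ (via Lemmas \ref{lem-embed} and \ref{lem-prw}), and on the boundary by comparing dual graphs of special fibers of models over the valuation ring --- the edges contracted by $w$-stabilization are exactly those whose rays $v_I$ are killed by $\pr_w$ (Remark \ref{rem-prw}), and the surviving edge lengths are preserved because the contraction of families $\mathcal{C}\to\mathcal{C}'$ is an isomorphism near each surviving node, leaving the local equation $xy=f$, hence $\val(f)$, unchanged. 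Note that your boundary argument is precisely the family-level avatar of the paper's chart computation: ``node equations are preserved at surviving nodes'' and ``$\rho_w^*\zeta=\widetilde\zeta$'' are the same geometric fact. In fact your second argument applies verbatim to interior points as well (a point of $M_{0,n}^{an}$ can perfectly well have a degenerate special fiber), so your first step, while correct, is logically redundant. What the paper's route buys is brevity and uniformity: toroidal functoriality handles all points and the compatibility with the skeleton projections simultaneously. What your route buys is a self-contained verification working directly with the set-theoretic $\trop$ defined in the paper; the price is that you implicitly use the identification of $\pr_w(\mk{n})$ with $\mk{w}$ as cone complexes (Theorem \ref{thm-heavyandlight}) to know that $\pr_w$ acts on a metric tree exactly by deleting the killed edges and retaining the surviving lengths, which ultimately rests on the linearity $d(C)=\sum_e \ell_e v_{I_e}$ of the distance coordinates on each cone.
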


\begin{proof}
Recall that the formation of skeletons is functorial for toroidal morphisms. Since $\rho_w$ is birational, it suffices to check that for any point $[C]\in \overline M_{0,n}$, there exist formal local toric charts around $x$ and $\rho_w(x)$, such that the monomial coordinates on the target pullback to monomial coordinates on the source. We take the charts to be the ones given by the deformation parameters of $[C]$. Note that the inverse image of a node of $\rho_w([C])$ is a single node of $[C]$. If $\zeta$ is the deformation parameter at the node of $\rho_w([C])$, notice that $\rho_w^*\zeta$ is simply $\widetilde \zeta$ where $\widetilde \zeta$ is the deformation parameter of the corresponding node of $[C]$. The morphism is clearly toric and dominant in the local charts, and the result follows.  
\end{proof}

\section{Spaces of rational weighted stable curves as fibre products}\label{section_fibre}
In this section, we express the projections $\pr_w(\mk{n})$ in terms of fibre products. We use the equality $\pr_w(\mk{n})=B'(G(w))$ from Theorem \ref{moduli_cor_mainresult} and study general properties of fibre products of Bergman fans. If $w$ has only heavy and light points, the tropical description as fibre products matches the analogous algebraic description nicely.

\subsubsection{Hassett spaces with weight $0$ points}\label{sec: alg-fibre-products}

Hassett considers ``zero weight'' variations on the moduli problem for weighted stable curves. Such spaces are natural from the perspective of the log minimal model program. That is, we consider $w = (w_1,\ldots, w_n)$ with $0\leq w_j\leq 1$, and $\sum w_i>2$.

The resulting moduli space $\overline{M}_{0,w}$ can be described as follows. Let $w^+$ be the vector of weights containing the positive entries of $w$, and assume that there are  $t$ entries equal to $0$ in $w$. Following Hassett~\cite[Section 2]{Has03},  the moduli space $\overline M_{0,w}$ is identified with the $t$-fold fibre product of the universal curve $\mathcal C_{0,w^+}$ of $\overline M_{0,w^+}$ over $\overline M_{0,w^+}$,
\[
\overline M_{0,w} \cong \mathcal C_{0,w^+}\times_{\overline M_{0,w^+}}\cdots \times_{\overline M_{0,w^+}}\mathcal C_{0,w^+}
\]

The special case when all positive weights are equal to $1$ allows the universal family to also be identified with the moduli space of curves with one more point of arbitrary weight. In particular, we see that

\[
\overline M(1^f,0^t)\cong\overline{M}_{0,f+1}\times_{\overline{M}_{0,f}}\ldots \times_{\overline{M}_{0,f}}\overline{M}_{0,f+1}.
\] 


Replacing the $0$ weights with $\epsilon$ weights, we obtain a birational morphism
\[
\overline M(1^f, \epsilon^t)\to \overline M(1^f,0^t),
\]
which is a desingularization of $\overline M(1^f,0^t)$. The exceptional loci are described explicitly in~\cite[Corollary 3.5]{Has03}. Informally, $\overline M(1^f,0^t)$ contains a locus parametrizing curves in which multiple $0$-weight points can collide with nodes. We now investigate the tropical analogue

\subsubsection{Tropical fibre products}

\begin{defn}
  Let $f: X \to Y := B(M)$ be a morphism from a tropical fan to a Bergman fan. Assume there are rational functions $\varphi_1,\dots,\varphi_r$ on $Y$ and $C := \varphi_1 \cdot \dots \cdot \varphi_r \cdot Y$ (for an in-depth discussion of rational functions and divisors, see for example \cite{AR07}). Then we define the \textit{pull-back} of $C$ along $Y$ to be 
  $$ f^*C := (\varphi_1 \circ f) \cdot \dots (\varphi_r \circ f) \cdot X.$$
 \end{defn}

 \begin{remark}
  One could of course make this definition for an arbitrary target variety $Y$. However, in this case the pull-back may depend on the choice of rational functions $\varphi_i$. That this is not the case for Bergman fans was shown in \cite[Example 8.2]{FR10}.
  
  We will need this definition in the case, where $Y = B(N) \times B(N) = B(N \oplus N)$ for some matroid $N$ and $C = \Delta_Y := \{(x,x); x \in Y\}$ is the diagonal of $Y$. It was shown in \cite[Corollary 4.2]{FR10} that there are rational function $\varphi_1,\dots,\varphi_r, r = \textnormal{rank}(N)$ on $B(N) \times B(N)$ such that $\Delta_{B(N)} = \varphi_1 \cdot \dots \varphi_r \cdot (B(N) \times B(N))$.
 \end{remark}

\begin{defn}
 Let $f:B(M) \to B(N), g:B(M') \to B(N)$ be morphisms of Bergman fans. Then we define their \textit{fibre product}
 $$B(M) \times_{B(N)} B(M') := (f,g)^*(\Delta_{B(N)}).$$
 Now assume we have morphisms $f': B'(M) \to B'(N), g': B'(M') \to B'(N)$. Both induce morphisms $f: B(M) \to B(N), g: B(M') \to B(N)$ and the corresponding fibre product contains a lineality space $L$ generated by $(1,\dots,1)$. Hence we can define
 $$B'(M) \times_{B'(N)} B'(M') := \left(B(M) \times_{B(N)} B(M') \right) / L.$$ 
\end{defn}

\begin{remark}  
  Tropical fibre products were first defined in \cite{FH11} in the more general context of \textit{smooth} tropical varieties. However, as the definition is a bit more involved and requires notions from intersection theory, we will restrict ourselves to fibre products of Bergman fans.
 
 All examples of fibre products that we consider in this paper will be nice in the sense that they are themselves Bergman fans and their support is equal to the set-theoretic fibre product $\{(x,y) \in B(M) \times B(M'); f(x) = g(y)\}$. However, both statements are false in general: The fibre product need not be a Bergman fan (in fact, it need not even be isomorphic to one!). Also, it is in general strictly contained in the set-theoretic fibre product. In fact, the latter may very well be a cone complex that is not pure or has the wrong dimension.
 
 The tropical fibre product, however, always has the \textit{correct} dimension due to its intersection-theoretic definition: 
 $$\dim \left(B(M) \times_{B(N)} B(M')\right) = \textnormal{rank}(M) + \textnormal{rank}(M') - \textnormal{rank}(N).$$
 This follows from the fact that we apply $\textnormal{rank}(N)$ many rational functions to the tropical fan $B(M) \times B(M')$.
\end{remark}

\subsection{Spaces of rational weighted stable tropical curves as fibre products}

\begin{defn}
 Let $G_0, G_1,G_2$ be graphs and assume $G_1 $ and $G_2$ both contain a subgraph $G_0$. We then denote by $G_1 \times_{G_0} G_2$ the graph obtained by gluing $G_1,G_2$ along these subgraphs.
\end{defn}

\begin{prop}\label{geom_prop_fibreproduct}
 Let $G_1,G_2$ be connected graphs, both containing a subgraph isomorphic to some complete graph $G_0$. Then 
 $$B'(G_1) \times_{B'(G_0)} B'(G_2) \cong B'(G_1 \times_{G_0} G_2).$$
 Furthermore, the support of the left hand side is the set-theoretic fibre product
 $$S(G_1 \times_{G_0} G_2) := \{ (v^1,v^2); v^i \in B'(G_i) \textnormal{ and } v_e^1 = v_e^2 \textnormal{ for all } e \in G_0\}.$$
 \begin{proof}
  First of all note that, since all graphs are connected, we have
  \begin{align*}
    \textnormal{rank}(G_1 \times_{G_0} G_2) &= \abs{V(G_1 \times_{G_0} G_2)} -1\\
	    &= \abs{V(G_1)} + \abs{V(G_2)} - \abs{V(G_0)} - 1\\
	    &= \textnormal{rank}(G_1) + \textnormal{rank}(G_2) - \textnormal{rank}(G_0)
  \end{align*}
In particular, both spaces have the same dimension. We must show that the linear map
\begin{align*}
 i: S(G_1 \times_{G_0} G_2) &\to B'(G_1 \times_{G_0} G_2)  \\
 ( (v_e)_{e \in G_1}, (v_e)_{e \in G_2}) &\mapsto (v_e)_{e \in G_1 \times_{G_0} G_2}
\end{align*}
is an embedding. More precisely, we only have to show that it is well-defined, i.e\ that its image lies in $B'(G_1 \times_{G_0} G_2)$. 

So assume $v^1 \in B'(G_1), v^2 \in B'(G_2)$ and for all edges $e$ in $G_0$ we have $v_e^1 = v_e^2$. Let $v := i(v^1,v^2)$. We want to show that for any circuit $C$ of $G_1 \times_{G_0} G_2$, the maximum of $\{v_e, e \in C\}$ is attained at least twice. If $C$ lies in $G_i$, this is clear, as $v^i \in B'(G_i)$, so it already attains the maximum twice.

Assume $C$ does not lie in $G_i$. We will prove the claim by induction on the cycle length of $C$. If $C$ has length 3, then $C$ must already lie in one of the $G_i$: We assume without loss of generality that two of the edges of $C$ lie in $G_1$. If the last edge is in $G_2$, its vertices must be vertices of $G_0$. But as $G_0$ is complete and a subgraph of $G_1$, the last edge must also lie in $G_1$. If $\abs{C} > 3$, we can use a similar argument to find a \emph{chord} $c$ of $C$, i.e.\ an edge $c \notin C$ connecting two vertices of $C$. This chord subdivides $C$ into two cycles $C',C''$ of smaller length. By induction, the maxima $\max\{v_e; e\in C'\}, \max\{v_e;e \in C''\}$ are assumed twice. If both maxima are assumed away from $c$, the maximum over $C$ is also attained twice. If one of the cycle attains its maximum on $c$, then either the other cycle attains its maximum away from $c$ and it is bigger, so that the maximum over $C$ is again attained twice, or the other cycle also attains its maximum on $c$, and they are equal. 
In that case the maximum is attained on $v_c,v_{e'},v_{e''}$ for edges $e' \in C'\wo \{c\}, e'' \in C''\wo\{c\}$. In either case, the maximum is attained twice on $C$.

As $B'(G_1 \times_{G_0} G_2)$ is an irreducible tropical variety containing $i(S(G_1 \times_{G_0} G_2))$ and has the same dimension as $B'(G_1) \times_{B'(G_0)} B'(G_2)$, we see that we must have
$$i(\abs{B'(G_1) \times_{B'(G_0)} B'(G_2)}) =i( S(G_1 \times_{G_0} G_2)) = \abs{B'(G_1 \times_{G_0} G_2)},$$
which implies that $\abs{B'(G_1) \times_{B'(G_0)} B'(G_2)} =  S(G_1 \times_{G_0} G_2)$.

In particular $B'(G_1) \times_{B'(G_0)} B'(G_2)$ is isomorphic to a multiple of $B'(G_1 \times_{G_0} G_2)$. It remains to show that the weights of the fibre product (or equivalently: at least one weight) are 1. But this follows from Lemma \ref{intro_lemma_fibreweight}.
  \end{proof}
\end{prop}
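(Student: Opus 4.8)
The plan is to exhibit the set-theoretic fibre product as an explicit subfan of $B'(G_1 \times_{G_0} G_2)$ and then use irreducibility to upgrade this to an equality of supports, which in turn yields the claimed isomorphism. First I would settle the dimension bookkeeping. Since $G_0$, $G_1$, $G_2$ and their gluing are all connected, the rank of each graphic matroid is the number of vertices minus one, so $\textnormal{rank}(G_1 \times_{G_0} G_2) = \abs{V(G_1)} + \abs{V(G_2)} - \abs{V(G_0)} - 1 = \textnormal{rank}(G_1) + \textnormal{rank}(G_2) - \textnormal{rank}(G_0)$. Comparing with the intersection-theoretic dimension formula for tropical fibre products recalled above, the two candidate spaces $B'(G_1) \times_{B'(G_0)} B'(G_2)$ and $B'(G_1 \times_{G_0} G_2)$ have the same dimension.

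Next I would introduce the natural linear map $i$ that sends a compatible pair $(v^1, v^2) \in S(G_1 \times_{G_0} G_2)$ to the vector in $\R^{E(G_1 \times_{G_0} G_2)}$ whose coordinate at an edge $e$ is $v^1_e$ or $v^2_e$; these agree on the glued edges of $G_0$ by compatibility, so $i$ is well defined as a map to a coordinate space, and it is visibly linear and injective. The real content is to check that its image lands inside the Bergman fan $B'(G_1 \times_{G_0} G_2)$. Via the circuit characterization of Bergman fans, this reduces to verifying that for every cycle $C$ of $G_1 \times_{G_0} G_2$, the maximal coordinate along $C$ is attained at least twice.

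This cycle condition is the heart of the matter and the step I expect to be the main obstacle; I would prove it by induction on the length of $C$. If $C$ lies entirely in $G_1$ or in $G_2$, the condition is inherited from $v^1 \in B'(G_1)$ or $v^2 \in B'(G_2)$. The structural point that makes the mixed case tractable is that consecutive edges of $C$ not belonging to $G_0$ must lie in the same $G_i$ (we glued only along $G_0$), which forces a triangle to live in a single $G_i$ and disposes of the base case $\abs{C} = 3$. For $\abs{C} > 3$ I would invoke chordality of $G_1 \times_{G_0} G_2$ to find a chord $c$ splitting $C$ into strictly shorter cycles $C'$ and $C''$, apply the inductive hypothesis to each, and combine. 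The delicate subcase is when both $C'$ and $C''$ realize their maxima on the shared chord $c$: here one must observe that each subcycle still attains that common value a second time on an edge other than $c$, so the maximum over all of $C$ is attained twice away from the chord.

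Finally I would close the argument. The image $i(S(G_1 \times_{G_0} G_2))$ is then a subset of the irreducible tropical fan $B'(G_1 \times_{G_0} G_2)$ of matching dimension, so by irreducibility their supports coincide; this identifies the support of the fibre product with the set-theoretic fibre product $S(G_1 \times_{G_0} G_2)$ and shows the fibre product equals $B'(G_1 \times_{G_0} G_2)$ up to a positive integer multiple on weights. To pin that multiplicity to $1$, and thereby obtain the asserted isomorphism, I would compute a single weight of the fibre product and appeal to the general weight formula for fibre products of Bergman fans in Lemma~\ref{intro_lemma_fibreweight}.
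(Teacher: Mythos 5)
Your proposal is correct and follows essentially the same route as the paper's own proof: the identical rank/dimension computation, the same linear map $i$ checked via the circuit criterion by induction on cycle length using chordality (including the delicate subcase where both subcycles attain their maxima on the chord), and the same conclusion via irreducibility plus the weight computation of Lemma~\ref{intro_lemma_fibreweight}. No gaps to report.
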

\begin{lemma}\label{intro_lemma_fibreweight}
 Let $M_1,M_2$ be matroids on ground sets $E_1,E_2$, where $E_0 := E_1 \cap E_2 \neq \emptyset$. Assume $M_0 = M_{1 \mid E_0} = M_{2 \mid E_0}$ is the common restriction of both matroids. Let $p_i: B(M_i) \to B(M_0)$ be the corresponding projections of Bergman fans. If the support of the fibre product is the set-theoretic fibre product
 $$\abs{B(M_1) \times_{B(M_0)} B(M_2)} = \{ (a_1,a_2) \in B(M_1) \times B(M_2); p_1(a_1) = p_2(a_2)\},$$
 then the fibre product has weight 1 on each top-dimensional cone.
 \begin{proof}
 Assume $B(M_1),B(M_2), B(M_0)$ are all equipped with the polyhedral structure defined by their chains of flats. In particular $p_i$ maps cones of $B(M_i)$ to cones of $B(M_0)$. Then the set-theoretic fibre product has a natural polyhedral structure through      the conewise fibre product:
  $$\curly{P} := \{ \sigma_1 \times_\tau \sigma_2; \sigma_i \textnormal{ a cone of } B(M_i) \textnormal{ and } p_i(\sigma_i) = \tau\},$$
  where $\sigma_1 \times_\tau \sigma_2 = \{(a_1,a_2); a_i \in \sigma_i; p_1(a_1) = p_2(a_2)$.
  
  In particular, a top-dimensional cone $\sigma$ of $\curly{P}$ is of the form $\sigma_1 \times_\tau \sigma_2$, where $\sigma_1,\sigma_2,\tau$ are all top-dimensional cones of their respective fans and an interior point $q$ of $\sigma$ is mapped to an interior point of $\tau$. To compute the weight of $\sigma$, we can look at the local morphism 
  $$\Star_{B(M_1) \times B(M_2)}(q) \to \Star_{B(M_0) \times B(M_0)}( (p_1,p_2)(q))$$
  induced by the combined projections $(p_1,p_2): B(M_1) \times B(M_2) \to B(M_0) \times B(M_0)$. Since $q$ is an interior point of $\sigma$ and $(p_1,p_2)(q)$ is an interior point of $\tau$, this is just a projection map of linear spaces
  $$\R^{r_1 + r_2} \to \R^{2r_0},$$
  where $r_i = \textnormal{rank}(M_i)$. Applying the diagonal functions $\varphi$ defined in \cite{FR10} cuts out the diagonal $\Delta_{\R^{r_0}}$ with weight 1 on the right hand side. Pulling this back via a linear projection yields again a linear space with weight 1. This concludes the proof.
 \end{proof}
\end{lemma}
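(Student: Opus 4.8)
The plan is to compute the weight of the fibre product on each top-dimensional cone as a purely local invariant, reducing the whole statement to a linear-algebra computation at a generic point. Since the fibre product is defined as the pull-back $(p_1,p_2)^*\Delta_{B(M_0)}$, its weight on a top cone is read off from the star of the pulled-back fan at an interior point of that cone; it therefore suffices to fix one top cone $\sigma$ and one generic $q$ in its relative interior and to determine the multiplicity there.

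First I would fix the chains-of-flats polyhedral structure on $B(M_1)$, $B(M_2)$ and $B(M_0)$. With respect to these structures $p_1$ and $p_2$ are morphisms of fans, and the set-theoretic fibre product carries the conewise polyhedral structure $\curly{P}$ whose cones are the $\sigma_1 \times_\tau \sigma_2$ with $p_1(\sigma_1)=p_2(\sigma_2)=\tau$. By hypothesis this set-theoretic object is the support of the tropical fibre product, which by its intersection-theoretic definition is pure of dimension $\textnormal{rank}(M_1)+\textnormal{rank}(M_2)-\textnormal{rank}(M_0)$. Combining this purity with the dimension formula $\dim(\sigma_1\times_\tau\sigma_2)=\dim\sigma_1+\dim\sigma_2-\dim\tau$ forces every top cone $\sigma$ to be of the form $\sigma_1\times_\tau\sigma_2$ with $\sigma_1,\sigma_2$ and $\tau$ all top-dimensional in their respective fans, so that a generic $q\in\sigma$ maps to interior points of $\sigma_1$, $\sigma_2$ and $\tau$.

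The key step is to pass to stars. At an interior point $q$ of a top cone the star of each Bergman fan is simply the linear span of that cone, so writing $r_i=\textnormal{rank}(M_i)$ we get $\Star_{B(M_1)\times B(M_2)}(q)\cong\R^{r_1+r_2}$, and the combined projection $(p_1,p_2)$ localizes to a linear map $\pi:\R^{r_1+r_2}\to\R^{2r_0}$ onto $\Star_{B(M_0)\times B(M_0)}((p_1,p_2)(q))\cong\R^{2r_0}$. Because the $p_i$ forget coordinates they are surjective lattice maps, hence so is $\pi$. Near the interior point $(p_1,p_2)(q)$ of the diagonal, the rational functions $\varphi_1,\dots,\varphi_{r_0}$ of \cite{FR10} restrict to affine functions cutting out the honest linear diagonal $\Delta_{\R^{r_0}}\subset\R^{2r_0}$ with weight $1$. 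The weight of $\sigma$ is then the weight of $(\varphi_1\circ\pi)\cdots(\varphi_{r_0}\circ\pi)\cdot\R^{r_1+r_2}$, i.e. the pull-back of the weight-$1$ linear cycle $\Delta_{\R^{r_0}}$ along $\pi$; since $\pi$ is a surjection of lattices, this pull-back again carries weight $1$, which is exactly the assertion.

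The main obstacle I anticipate is the bookkeeping required to make this localization rigorous. One must verify that the $\varphi_j$ genuinely restrict to affine functions cutting out the diagonal with multiplicity one at a generic point, so that no spurious lattice index is introduced, and that $\pi$ is a surjection of lattices rather than merely of real vector spaces. The first point follows because at an interior point of a top cone the diagonal is smooth and the $\varphi_j$ are locally affine, and the second because the $p_i$ are coordinate-forgetting projections and hence lattice surjections. Once these two facts are in place, the invariance of tropical weights under pull-back along a surjective linear map of lattices closes the argument.
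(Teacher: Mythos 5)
Your proposal is correct and takes essentially the same approach as the paper's proof: both endow the set-theoretic fibre product with the conewise polyhedral structure, reduce to an interior point of a top cone $\sigma_1 \times_\tau \sigma_2$ with all three factors top-dimensional, and compute the weight locally by pulling back the diagonal cut out by the functions of \cite{FR10} along the linearized projection $\R^{r_1+r_2} \to \R^{2r_0}$. Your explicit attention to lattice (rather than merely real-linear) surjectivity of that projection only spells out what the paper leaves implicit in calling it a ``projection map of linear spaces.''
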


\begin{defn}
 Let $w$ be a weight vector. We divide the vertices of $G(w)$ into
\begin{align*}
 L(w) &:= \{i \in \{2,\dots,n\}, w_i > 1/2\} \\
 S(w) &:= \{i \in \{2,\dots,n\},w_i \leq 1/2\}
\end{align*}
 We define the following graphs:
 \begin{itemize}
  \item $K(w)$ is the restriction of $G(w)$ to $L(w)$ (i.e.\ it is the complete graph on $L(w)$).
  \item For $i \in S(w)$, we let $G_i(w)$ be the restriction of $G(w)$ to $L(w) \cup \{i\}$.
 \end{itemize}
\end{defn}

\begin{remark}
 The graphs $G_i(w)$ share the common subgraph $K(w)$, and gluing them together gives us back our weight graph:
 $$G(w) = G_{i_1}(w) \times_{K(w)} \dots \times_{K(w)} G_{i_s}(w),$$
 where $S(w) = \{i_1,\dots,i_s\}$. As $K(w)$ is complete, we can apply Proposition \ref{geom_prop_fibreproduct}. However, to obtain a deeper geometric understanding, we first want to study the spaces $B'(G_i(w))$.
\end{remark}

\begin{prop}\label{geom_prop_single_degree}
 Assume $i \in S(w)$ has vertex degree 1. Then 
 $$B'(G_i(w)) \cong \mk{\abs{L(w)}+1} \times \R$$
 \begin{proof}
  By assumption $G_i(w)$ is just $K(w)$ with an additional edge attached. Hence its matroid is just the direct sum of the matroid of $K(w)$ and a coloop. This implies the claim.
 \end{proof}
\end{prop}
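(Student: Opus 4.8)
The plan is to identify the graphic matroid of $G_i(w)$ explicitly and then invoke the product formula for Bergman fans recorded in the preliminaries. First I would observe that, because $G(w)$ is a split graph (Lemma \ref{geom_lemma_split}), the set $S(w)$ is independent, so every edge incident to $i$ joins $i$ to a vertex of $L(w)$; restricting to $L(w) \cup \{i\}$ therefore preserves all edges at $i$, and the hypothesis $\deg i = 1$ means $G_i(w)$ is precisely the complete graph $K(w)$ on $L(w)$ with a single pendant edge $e$ attached. Since all vertices of $L(w)$ have weight exceeding $1/2$, any two of them sum to more than $1$, so $K(w)$ is genuinely $K_{\abs{L(w)}}$.

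Next I would note that the pendant edge $e$ is a bridge of $G_i(w)$ and hence a coloop of the graphic matroid $M_{G_i(w)}$; consequently $M_{G_i(w)} \cong M_{K(w)} \oplus U_{1,1}$, the direct sum of the matroid of $K(w)$ with a single coloop. Applying the product formula $B(M \oplus M') = B(M) \times B(M')$ from the preliminaries, together with the elementary fact that the Bergman fan of a single coloop is all of $\R$ (the coloop lies in every basis, so $(U_{1,1})_w$ is loop-free for every $w$), gives
$$B(M_{G_i(w)}) = B(M_{K(w)}) \times \R.$$
I would then recall the identification $\mk{m} \cong B'(K_{m-1})$ from Section \ref{prelim-trop}; with $m = \abs{L(w)} + 1$ this reads $B'(M_{K(w)}) = B'(K_{\abs{L(w)}}) \cong \mk{\abs{L(w)}+1}$.

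The only point requiring genuine care is the passage from $B$ to $B'$, i.e.\ the bookkeeping of lineality spaces, since the coloop contributes its own free $\R$-direction that overlaps the diagonal lineality. One must check that quotienting the product $B(M_{K(w)}) \times \R$ by the single global diagonal $\gnrt{(1,\dots,1)}$ still yields a clean product. This is a short linear-algebra verification: writing $x_i$ for the complete-graph coordinates and $s$ for the coloop coordinate, the differences $x_i - x_1$ descend to coordinates on $B'(M_{K(w)})$, while $s - x_1$ is invariant under the diagonal action and supplies the remaining free direction, exhibiting
$$\bigl( B(M_{K(w)}) \times \R \bigr)/\gnrt{(1,\dots,1)} \cong B'(M_{K(w)}) \times \R.$$
Combining the displays yields $B'(G_i(w)) \cong \mk{\abs{L(w)}+1} \times \R$, as claimed. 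The argument is essentially formal once the matroid of $G_i(w)$ is recognized as $M_{K(w)} \oplus U_{1,1}$, and I expect that recognition together with the lineality bookkeeping to be the only substantive steps, neither of them difficult.
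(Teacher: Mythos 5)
Your proof is correct and follows essentially the same route as the paper: recognize $G_i(w)$ as $K(w)$ with a pendant edge, identify its matroid as $M_{K(w)} \oplus U_{1,1}$ (a coloop), and invoke the product formula for Bergman fans. The paper leaves the final step at \enquote{this implies the claim}, whereas you additionally spell out the lineality-space bookkeeping in passing from $B$ to $B'$; that verification is a worthwhile detail but not a different argument.
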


\begin{remark}
 This has a natural geometric interpretation that is very similar to Example \ref{moduli_ex_badtypes}. The fact that vertex $i$ is only connected to one vertex $j$ of $K(w)$ means that there are exactly two leaves, namely 1 and $j$, with which $i$ is compatible. More precisely, $w_i + w_j, w_i + w_1 > 1$, but $w_i + w_k \leq 1$ for all other $k$. Hence we can only place leaf $i$ along the \textit{line} spanned by leaves $j$ and $1$ (All other choices would produce inherited unstable types).
\end{remark}

\begin{prop}\label{geom_prop_higher_degree}
 Assume $i \in S(w)$ has vertex degree $d > 1$. Then
 $$B'(G_i(w)) \cong \mk{\abs{L(w)}+1} \times_{\mk{d+1}} \mk{d+2}.$$
 \begin{proof}
  We can write $G_i(w)$ as a glued graph: Let $K_i$ be the restriction of $G_i(w)$ to vertices $j \neq i$ connected to $i$. In particular it is a complete graph on $d$ vertices. If we add vertex $i$, we obtain $K_i'$, the complete graph on $d+1$ vertices. Now $K(w)$ and $K_i'$ share the common subgraph $K_i$ and 
  $$G_i(w) = K(w) \times_{K_i} K_i'.$$
  The result now follows from Proposition~\ref{geom_prop_fibreproduct}.
 \end{proof}
\end{prop}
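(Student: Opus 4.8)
The plan is to exhibit $G_i(w)$ as a graph glued from complete graphs, apply Proposition~\ref{geom_prop_fibreproduct} to pass to a fibre product of the associated Bergman fans, and then translate complete graphs into tropical moduli spaces via the standard identification $B'(K_m) \cong \mk{m+1}$ recorded in Section~\ref{prelim-trop}.

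First I would set up the decomposition. Since $i \in S(w)$, the vertex $i$ is adjacent in $G(w)$ only to vertices of $L(w)$, say to exactly the $d$ vertices forming a set $N$. Let $K_i$ be the complete graph on $N$ (the restriction of the complete graph $K(w)$ to $N$), and let $K_i'$ be the complete graph on $N \cup \{i\}$. I would then check that
$$G_i(w) = K(w) \times_{K_i} K_i',$$
by accounting for edges: every edge among vertices of $L(w)$ is contributed by $K(w)$, every edge from $i$ to its neighbours is contributed by $K_i'$, and the shared edges are precisely those among the vertices of $N$, that is, the edges of $K_i$.

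Next I would verify the hypotheses of Proposition~\ref{geom_prop_fibreproduct}. The three graphs $K(w)$, $K_i'$, and $K_i$ are complete, hence connected, and $K_i$ embeds as a connected subgraph of both $K(w)$ and $K_i'$. The glued graph $G_i(w)$ is the restriction of the split graph $G(w)$ (Lemma~\ref{geom_lemma_split}) to the vertex set $L(w) \cup \{i\}$; restricting a split graph to a subset of its vertices again yields a split graph, which is chordal by the elementary lemma preceding Proposition~\ref{geom_prop_fibreproduct}. All hypotheses being met, Proposition~\ref{geom_prop_fibreproduct} gives
$$B'(G_i(w)) \cong B'(K(w)) \times_{B'(K_i)} B'(K_i').$$

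Finally I would rewrite each factor using $B'(K_m) \cong \mk{m+1}$: since $K(w)$ is complete on $\abs{L(w)}$ vertices, $K_i$ on $d$ vertices, and $K_i'$ on $d+1$ vertices, the three Bergman fans become $\mk{\abs{L(w)}+1}$, $\mk{d+1}$, and $\mk{d+2}$ respectively, yielding the claimed isomorphism. I expect the only point requiring genuine care to be the bookkeeping of the gluing---confirming that the common subgraph is exactly $K_i$, neither a larger nor a smaller complete graph---together with the observation that a restriction of a split graph remains split, and hence chordal, since that is precisely what licenses the application of Proposition~\ref{geom_prop_fibreproduct}.
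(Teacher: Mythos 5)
Your proposal is correct and follows essentially the same route as the paper: the identical decomposition $G_i(w) = K(w) \times_{K_i} K_i'$ with $K_i$ the complete graph on the $d$ neighbours of $i$ and $K_i'$ its extension by $i$, followed by the split-hence-chordal observation to invoke Proposition~\ref{geom_prop_fibreproduct}, and the identification $B'(K_m) \cong \mk{m+1}$. Your additional bookkeeping (edge accounting for the gluing, the fact that neighbours of $i$ lie in $L(w)$, and that restrictions of split graphs remain split) simply makes explicit what the paper leaves implicit.
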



\begin{theorem}\label{thm-tropfibre}
 Let $w $ be a weight vector. Let $D_1 := \{i \in S(w): \deg(i) = 1\}$ and assume $S(w) \wo D_1 = \{i_1,\dots i_s\}$. Let
 \[
   \curly{M} := \prod_{j=1}^s \mk{\deg(i_j)+1} \ \ \   \curly{M}' :=  \prod_{j=1}^s \mk{\deg(i_j)+2}
 \]
%
Then we have
 $$\pr_w(\mk{n}) \cong \R^{\abs{D_1}} \times \left( \mk{\abs{L}+1} \times_{\curly{M}} \curly{M}'\right),$$
 where the maps from $\mk{\abs{L}+1}$ and $\curly{M}'$ to $\curly{M}$ are the natural tuples of forgetful maps (If $s = 0$, we set $\mk{\abs{L}+1} \times_{\curly{M}} \curly{M} := \mk{\abs{L}+1}$).
 \begin{proof}
  Assume $D_1 = \{k_1,\dots,k_t\}$. By Proposition \ref{geom_prop_fibreproduct} we know that
  $$\pr_w(\mk{n}) \cong B'(G_{k_1}(w)) \times_{B'(K(w))} \dots \times_{B'(K(w))} B'(G_{i_s}(w))$$
  and that the support of the latter is the set-theoretic fibre product. The claim now follows from Propositions \ref{geom_prop_single_degree} and \ref{geom_prop_higher_degree}.
 \end{proof}
\end{theorem}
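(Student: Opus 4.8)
The plan is to start from the identification $\pr_w(\mk{n}) = B'(G(w))$ supplied by Theorem \ref{moduli_cor_mainresult}, and then realize $G(w)$ as an iterated amalgam of the pieces $G_i(w)$, translating this amalgam into a tropical fibre product via Proposition \ref{geom_prop_fibreproduct}. Writing $S(w) = \{j_1,\dots,j_m\}$ for the \emph{full} set of small indices (so that $D_1$ together with the $i_1,\dots,i_s$ of the statement exhausts $S(w)$), I would recall from the remark preceding the theorem that $G(w) = G_{j_1}(w) \times_{K(w)} \dots \times_{K(w)} G_{j_m}(w)$, where every $G_{j}(w)$ contains the common connected subgraph $K(w)$ and every intermediate glued graph is again a split graph, hence chordal by the elementary lemma.

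First I would run Proposition \ref{geom_prop_fibreproduct} inductively along this amalgam: at each step the two graphs being glued are connected and their amalgam is chordal, so the proposition applies and yields
\[
\pr_w(\mk{n}) = B'(G(w)) \cong B'(G_{j_1}(w)) \times_{B'(K(w))} \dots \times_{B'(K(w))} B'(G_{j_m}(w)),
\]
with the crucial bonus that the support of this iterated fibre product equals the \emph{set-theoretic} fibre product over $B'(K(w))$. This is precisely what licenses the remaining manipulations to be carried out at the level of supports.

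Next I would substitute the explicit descriptions of the individual factors. For $k \in D_1$, Proposition \ref{geom_prop_single_degree} gives $B'(G_k(w)) \cong \mk{\abs{L}+1} \times \R$, with the structure map to the base $B'(K(w)) \cong \mk{\abs{L}+1}$ being the projection that forgets the free $\R$-coordinate (the coloop). For the remaining $i_j$ of degree $>1$, Proposition \ref{geom_prop_higher_degree} gives $B'(G_{i_j}(w)) \cong \mk{\abs{L}+1} \times_{\mk{\deg(i_j)+1}} \mk{\deg(i_j)+2}$, where the map $\mk{\abs{L}+1} \to \mk{\deg(i_j)+1}$ comes from restricting $K(w)$ to the complete subgraph on the neighbours of $i_j$; under the identification $B'(K_\bullet) \cong \mk{\bullet+1}$ this is precisely the forgetful morphism retaining the marking $1$ and those adjacent to $i_j$. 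The degree-one factors then peel off as a direct $\R^{\abs{D_1}}$ summand, since their structure maps to the base are projections.

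The main obstacle -- really the only substantive bookkeeping -- is to reorganize the iterated fibre product over the single base $\mk{\abs{L}+1}$ into a single fibre product over the product base $\curly{M} = \prod_j \mk{\deg(i_j)+1}$. Working at the level of supports (justified above), an element of the iterated fibre product is one point $b \in \mk{\abs{L}+1}$ together with, for each $i_j$, a point of $\mk{\deg(i_j)+2}$ whose image in $\mk{\deg(i_j)+1}$ matches the forgetful image of the \emph{common} $b$. Collecting these matching conditions across all $j$ identifies the set with $\mk{\abs{L}+1} \times_{\curly{M}} \curly{M}'$, the map $\mk{\abs{L}+1} \to \curly{M}$ being the tuple of the individual forgetful morphisms and $\curly{M}' = \prod_j \mk{\deg(i_j)+2}$. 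The delicate point is to confirm that this combined map is the \emph{natural} tuple of forgetful maps asserted in the statement, which follows from the identification of each factor's base map as a forgetful morphism above. Finally, since all objects involved are Bergman fans carrying weight $1$ on their top-dimensional cones (by Lemma \ref{intro_lemma_fibreweight} together with irreducibility), the support-level bijection upgrades to an isomorphism of weighted balanced fans, giving $\pr_w(\mk{n}) \cong \R^{\abs{D_1}} \times \left( \mk{\abs{L}+1} \times_{\curly{M}} \curly{M}' \right)$.
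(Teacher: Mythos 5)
Your proposal is correct and follows essentially the same route as the paper: decompose $G(w)$ as an iterated amalgam of the graphs $G_i(w)$ over $K(w)$, apply Proposition \ref{geom_prop_fibreproduct} (using that all intermediate glued graphs are split, hence chordal) to express $\pr_w(\mk{n})=B'(G(w))$ as the iterated tropical fibre product over $B'(K(w))$ with set-theoretic support, and then substitute Propositions \ref{geom_prop_single_degree} and \ref{geom_prop_higher_degree}. The bookkeeping you spell out --- identifying the structure maps as forgetful morphisms, regrouping the iterated fibre product over the product base $\curly{M}$, peeling off the $\R^{\abs{D_1}}$ factor, and upgrading the support-level identification to an isomorphism of weight-one balanced fans --- is exactly what the paper compresses into ``the claim now follows.''
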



\begin{corollary}\label{cor-fibreheavylight}
 Let $w = (1^f,\epsilon^t)$, where $f \geq 3, t \geq 2$. Then 
 \[
 \mk{w} \cong \underbrace{\mk{f+1} \times_{\mk{f}} \dots \times_{\mk{f}} \mk{f+1}}_{t \textnormal{ times}}.
\]
\end{corollary}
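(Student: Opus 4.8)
The plan is to specialize Theorem \ref{thm-tropfibre} to the weight vector $w = (1^f,\epsilon^t)$ and then collapse the resulting ``diagonal'' fibre product into the iterated one appearing in the statement.

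First I would read off the combinatorics of $G(w)$. After normalizing $w_1 = 1$ and deleting the vertex $1$, the vertices of the reduced weight graph split as $L(w)$, the $f-1$ remaining weight-$1$ marks, and $S(w)$, the $t$ weight-$\epsilon$ marks (here $\epsilon \leq 1/2$). Because $\epsilon$ is light, a weight-$\epsilon$ mark $i$ satisfies $w_i + w_j = 1+\epsilon > 1$ for every heavy $j$ and $w_i + w_{j'} = 2\epsilon \leq 1$ for every light $j'$; hence $i$ is joined in $G(w)$ exactly to the $f-1$ vertices of $L(w)$, so $\deg(i) = f-1$. As $f \geq 3$, every small vertex has degree $f-1 \geq 2 > 1$, so the set $D_1$ of degree-one small vertices is empty and $s = \abs{S(w)} = t$.

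Next I would substitute $\abs{L} = f-1$ and $\deg(i_j) = f-1$ into Theorem \ref{thm-tropfibre}. This gives $\mk{\abs{L}+1} = \mk{f}$, $\curly{M} = (\mk{f})^t$, and $\curly{M}' = (\mk{f+1})^t$, and since $\abs{D_1} = 0$ the theorem yields $\pr_w(\mk{n}) \cong \mk{f} \times_{(\mk{f})^t} (\mk{f+1})^t$. As $w$ is heavy/light, Theorem \ref{thm-heavyandlight} identifies $\pr_w(\mk{n})$ with $\mk{w}$, so only the rewriting of the right-hand side remains.

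The one genuinely non-formal step, and where I would be most careful, is this final identification. Each forgetful map $\mk{\abs{L}+1} \to \mk{\deg(i_j)+1}$ making up the structure map to $\curly{M}$ forgets the marks of $L(w)$ not adjacent to $i_j$; since $i_j$ is adjacent to all of $L(w)$, nothing is forgotten and each component is the identity, so $\mk{f} \to (\mk{f})^t$ is the diagonal. A point of $\mk{f} \times_{(\mk{f})^t} (\mk{f+1})^t$ is therefore a tuple $(y;x_1,\dots,x_t)$ with $\ft(x_1) = \dots = \ft(x_t) = y$, and projecting away the redundant coordinate $y$ is a weight- and fan-structure-preserving isomorphism onto $\{(x_1,\dots,x_t) : \ft(x_1) = \dots = \ft(x_t)\}$, which is exactly $\mk{f+1} \times_{\mk{f}} \dots \times_{\mk{f}} \mk{f+1}$ with $t$ factors. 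Equivalently, and perhaps more transparently, I could argue directly from Proposition \ref{geom_prop_fibreproduct}: writing $G(w) = G_{i_1}(w) \times_{K(w)} \dots \times_{K(w)} G_{i_t}(w)$ and noting that every intermediate glued graph is again a split graph (Lemma \ref{geom_lemma_split}), hence chordal and connected, Proposition \ref{geom_prop_fibreproduct} applies at each step and expresses $B'(G(w))$ as the iterated fibre product of the $B'(G_{i_j}(w))$ over $B'(K(w))$; since here $K(w) = K_{f-1}$ and each $G_{i_j}(w) = K_f$, we have $B'(K(w)) = \mk{f}$ and $B'(G_{i_j}(w)) = \mk{f+1}$, which is precisely the claim. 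The main obstacle in either route is purely the bookkeeping of the forgetful maps and the diagonal collapse; no geometric input beyond Theorems \ref{moduli_cor_mainresult}, \ref{thm-heavyandlight}, and \ref{thm-tropfibre} is needed.
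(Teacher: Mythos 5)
Your proposal is correct and follows essentially the same route as the paper: the paper states Corollary \ref{cor-fibreheavylight} without proof as an immediate specialization of Theorem \ref{thm-tropfibre}, whose own proof proceeds exactly via your second route, namely writing $G(w) = G_{i_1}(w) \times_{K(w)} \dots \times_{K(w)} G_{i_t}(w)$ with $K(w)=K_{f-1}$ and $G_{i_j}(w)=K_f$ and iterating Proposition \ref{geom_prop_fibreproduct}. Your degree bookkeeping ($D_1=\emptyset$ since $f\geq 3$, $\deg(i_j)=f-1$) and the observation that the structure map $\mk{f}\to(\mk{f})^t$ is the diagonal, so that collapsing it recovers the iterated fibre product, is precisely the intended reconciliation of the theorem's statement with the corollary's form.
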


\begin{remark}
The last result might seem somewhat incongruous at first, as classically, we have
 \[
\overline M(1^f,0^t)\cong\overline{M}_{0,f+1}\times_{\overline{M}_{0,f}}\ldots \times_{\overline{M}_{0,f}}\overline{M}_{0,f+1}
\] 
and that $\overline{M}(1^f,\epsilon^t)$ is a blowup of this. In fact, the tropical phenomena are analogous.

The tropical fibre product has a canonical polyhedral structure $\curly{P}$ consisting of taking conewise fibre products of cones of $\mk{f+1}$ in its coarse subdivision. Points in the same cone correspond to $n$-marked tropical curves that
\begin{itemize}
 \item have the same combinatorial \textit{base curve} $C$ when forgetting all light ends;
 \item have their light ends placed on the same edges or ends of $C$.
\end{itemize}

In other words, this is not the polyhedral structure that was assigned to $\mk{w}$. Whenever multiple light ends are put on the same edge of the base curve, we need to subdivide. More precisely, if $\sigma$ is a cone in $\curly{P}$ corresponding to a base curve with $d$ bounded edges and $t$ light ends placed on its edges and leafs, then we can identify $\relint(\sigma)$ with $\R^{d+t}$. Under this identification, the subdivision is a product of $\R^k$ together with tropical Losev--Manin spaces. Recall from Section~\ref{sec: LosevManin} that $\mk{(1^2,\epsilon^r)} \cong \R^{r-1}$ parametrizes ways to position $r$ labelled ends relative to each other on a line. For each bounded edge with at least one light end, we get a factor of $\R^2 \times \mk{(1^2,\epsilon^r)}$, where $r$ is the number of light ends placed on this edge. In similar fashion, we obtain a factor of $\R \times \mk{(1^2,\epsilon^r)}$ for each end with at least one light end placed on it. Finally, we get a factor of $\R$ for each bounded edge with no light ends. 
\end{remark}

\bibliographystyle{siam}
\bibliography{tropWSC}

\end{document}